\newtheorem{theorem}{Theorem}[section]
\newtheorem{definition}[theorem]{Definition}
\newtheorem{lemma}[theorem]{Lemma}
\newtheorem{remark}[theorem]{Remark}
\newtheorem{prop}[theorem]{Proposition}
\newtheorem{example}[theorem]{Example}
\newtheorem{corollary}[theorem]{Corollary}
\newtheorem{proposition}[theorem]{Proposition}
\DeclareFontFamily{U}{mathx}{\hyphenchar\font45}
\DeclareFontShape{U}{mathx}{m}{n}{
      <5> <6> <7> <8> <9> <10>
      <10.95> <12> <14.4> <17.28> <20.74> <24.88>
      mathx10
      }{}
\DeclareSymbolFont{mathx}{U}{mathx}{m}{n}
\DeclareMathAccent{\widecheck}{0}{mathx}{"71}
\DeclareMathAccent{\wideparen}{0}{mathx}{"75}
\newcommand{\A}{\mathbb A}
\newcommand{\B}{\mathbb B}
\newcommand{\predek}[2]{\lfloor #1\rfloor_{#2}}
\newcommand{\penum}[1]{\dot #1}
\newcommand{\fs}[2]{[#1 | #2]}
\newcommand{\cfs}[2]{\{ #1 | #2\}}
\newcommand{\fsi}[2]{\cfs {#1}{#2}^* }
\newcommand{\good}[2]{{\G}_{#2}#1}
\newcommand{\G}{\mathcal G}
\newcommand{\begincases}{\begin{enumerate}[label*={\sc Case \arabic*},wide, labelwidth=!, labelindent=0pt]}
\newcommand{\beginclaims}{\begin{enumerate}[label*={\sc Claim },wide, labelwidth=!, labelindent=0pt]}
\newcommand{\begincasesa}{\begin{enumerate}[label={\sc Case ({\rm \roman*})},wide, labelwidth=!, labelindent=0pt]}
\newcommand{\beginsubcases}{\begin{enumerate}[label*= {\rm .\arabic*},wide, labelwidth=!, labelindent=0pt]}
\newcommand{\coeffs}[1]{{\rm C}(#1)}
\newcommand{\coeffsk}[2]{{\rm C}_{#1}(#2)}
\newcommand{\upsquiggly}{\rotatebox[origin=c]{90}{$\leadsto$}}
\newcommand{\chg}[1]{{\uparrow}{#1}}
\newcommand{\val}[1]{|#1|}
\newcommand{\mc}[1]{{\rm mc}(#1)}
\newcommand{\mck}[2]{{\rm mc}_{#1}(#2)}
\newcommand{\bch}[2]{\mathop{\uparrow #2}  #1 }
\newcommand{\bases}[2]{ ^{#2}_{#1}}
\newcommand{\david}[1]{}
\newcommand{\putaway}[1]{}
\newcommand{\gknf}{\nfp k }
\newcommand{\nfp}[1]{=_{#1}}
\newcommand{\ve}{\varepsilon}
\renewcommand{\phi}{{\overline{\varphi}}}
\newcommand{\Om}{{\Omega}}
\newcommand{\al}{{\alpha}}
\newcommand{\be}{{\beta}}
\newcommand{\ga}{\gamma}
\newcommand{\de}{\delta}
\newcommand{\N}{\mathbb N}
\newcommand{\la}{\lambda}
\newcommand{\om}{\omega}
\begin{document}

\title{Fast Goodstein Walks}

\author{David Fern\'andez-Duque and Andreas Weiermann}

\maketitle







\begin{abstract}
We define a variant of the Goodstein process based on fast-growing functions and show that it terminates, although this fact is not provable in Kripke-Platek set theory (or other theories of Bachmann-Howard strength).
We moreover show that this Goodstein process is of maximal length, so that any alternative Goodstein process based on the same fast-growing functions will also terminate.
\end{abstract}

\section{Introduction}

A common reaction to Gödel's proof of incompleteness for Peano arithmetic ($\sf PA$)~\cite{Godel1931} is to argue that the unprovable statement he produced is artificially constructed, casting doubt on whether there are `natural' arithmetical statements which are independent of $\sf PA$.
This establishes the challenge of finding independent statements which do not involve direct coding of metamathematical objects, or other elements which may be deemed extraneous to disciplines such as combinatorics or number theory.

Goodstein's classic principle~\cite{Goodsteinb} is perhaps the oldest example of a statement meeting this challenge.
It is a true statement independent of $\sf PA$~\cite{Kirby} whose understanding requires only high-school level mathematics.
The proof itself does use the well-foundedness of $\varepsilon_0$ \cite{Goodstein1944}, the proof-theoretic ordinal of $\sf PA$.
Goodstein's principle is based on {\em hereditary exponential normal forms,} where\-by each natural number is written in base $k$ in the standard way, as is each exponent appearing in the expansion, and so forth.
Thus for example we may write $18 = 2^{2^2} + 2$ in base-$2$ hereditary exponential normal form.

The Goodstein process then proceeds as follows: given a natural number $m_0$, write $m_0$ in base-$2$ hereditary exponential normal form.
Then, compute $m_1$ by replacing every $2$ appearing in the normal form of $m_0$ by $3$, then subtracting one.
We continue inductively in this fashion, defining $m_{k+1}$ by writing $m_k$ in base-$(k+2)$ normal form, replacing $ k+2 $ by $ k+3 $, and subtracting $1$; the operation of replacing every instance of $ k+2 $ by $ k+3 $ is an instance of the {\em base change operation.}
The Goodstein process terminates if $m_\ell = 0$ for some $\ell$.

For example, if $m_0 = 18$, we would have $m_1 = 3^{3^3} + 2-1 = 3^{3^3} + 1$, which already has thirteen digits.
One can easily check that the sequence continues to grow rather quickly at first.
Nevertheless, Goodstein's principle states that this process always terminates in finite time.

One may wonder if the use of normal forms is essential.
For example, we can write $18 = 2^{2+1}+2^2+2^2+2$.
This begs the question: will Goodstein processes terminate no matter how we write natural numbers?
Surprisingly, the answer is {\em yes.}
A {\em Goodstein walk} is a sequence $m_0,m_1,\ldots$ where each $m_{k+1}$ is defined by writing $m_k$ using {\em any} expression formed from $0,+,(k+2)^x$, then changing the base from $k+2$ to $k+3$ and subtracting one.
The authors have recently shown that every Goodstein walk is finite~\cite{FernandezWWalk}, using the fact that hereditary exponential normal forms are {\em base change maximal,} meaning that applying the base change operator to the normal form of $m$ yields the maximal value among all the possible terms for $m$.
This implies that Goodstein processes using such normal forms have maximal length, hence since these processes are finite, so is any other Goodstein walk.

Goodstein's theorem may also be extended by considering notations based on functions other than the exponential. 
A parametrized version of the Ackermann function gives rise to independence results for theories between $\sf PA$ and the second-order arithmetical theory ${\sf ATR}_0$ of arithmetical transfinite recursion \cite{FSGoodstein}.
This process is based on a notion of normal form based on a `sandwiching' procedure, but other natural notions of normal forms can be considered~\cite{FernandezWCiE}.
More generally, Goodstein walks are also naturally defined in terms of the Ackermann function.
The sandwiching normal forms are base-change maximal in this setting, meaning that every Ackermannian Goodstein walk is finite \cite{FernandezWWalk}.

For writing much bigger numbers, one may switch to notation using even faster-growing functions.
The ordinal $\ve_0$ can be used to define very fast-grow\-ing functions by transfinite recursion.
There are several prominent examples of this, e.g.~the Hardy function \cite{Hardy}.
Goodstein principles based on fast-grow\-ing functions \cite{AraiWW} give rise to independence from theories with strength the Bach\-mann-Howard ordinal, including Kripke-Platek set theory ($\sf KP$) (see e.g.~\cite{Pohlers:2009:PTBook}).

Goodstein walks may also be considered in the context of fast-grow\-ing functions.
Our main goal is to show that these walks are finite, a fact independent of $\sf KP$.
We will use a function that grows slightly faster than standard fast-grow\-ing hierarchies and denote it $\A_k(\xi)$, where $k$ is the number of iterations and $\xi<\ve_0$ is an ordinal.
The precise definition will be given in Section \ref{secAFun}.
Note that Peano arithmetic does not prove that these functions are total, but $\sf KP$ does (as do much weaker theories).

Let us give a brief description of $\sf KP$; a more formal treatment can be found in e.g.~\cite{Barwise}.
The theory $\sf KP$ (with infinity) is axiomatized by all axioms of $\sf ZFC$ except for powerset, but with separation restricted to $\Delta_0$ formulas and replacement restricted to $\Delta_0$-collection.
This theory, while much more powerful than Peano arithmetic, does not prove that the real line exists as a set.
It proves the same arithmetical formulas as other well-known theories such as the theory ${\sf ID}_1$ of non-iterated inductive definitions, and ${\Pi}^1_1$-${\sf CA}^-_0$ of parameter-free ${\Pi}^1_1$ comprehension.
Precise details of these theories are not needed, except that their proof-theoretic ordinal is $\psi(\ve_{\Om+1})$, where $\Om$ is the first uncountable ordinal and $\psi\colon \ve_{\Om+1} \to \Om$ is a function that transforms possibly uncountable ordinals into countable ones.
Most relevant to us is that this ordinal and its system of fundamental sequences can be used to bound the provably total computable functions of any of these three theories.
This allows us to prove that the Goodstein principle based on the $\A_k$ functions is is also unprovable, by showing that the process reaches zero more slowly than stepping down these fundamental sequences.

\section{General outline}

In order to define our Goodstein processes, we must first define the functions $\A_k$ on which they are based.
These functions map ordinals below $\ve_0$ to natural numbers and are defined in terms of {\em fundamental sequences;} each $\xi<\ve_0$ is assigned a sequence $\left ( \fs \xi n\right)_{n<\om}$ such that $\xi=\lim_{n\to\infty} \fs\xi n$ whenever $\xi$ is a limit; for example, one may set $\fs\om n= n$ for all $n$.\footnote{One typically write $\xi[n]$ rather than $\fs\xi n$, but the latter notation will be convenient to avoid ambiguity in expressions such as $\al\be[n]$.}
Fundamental sequences are discussed in Section \ref{secOrd}, along with general background in ordinal arithmetic used in the text.

We can use these fundamental sequences to define fast-growing functions by transfinite recursion.
For $k\geq 2$, the function $\A_k\colon\ve_0\to\mathbb N$ is first defined by setting $\A(\xi)=\xi +1$ if $\xi<\om$.
If $\xi =  \al+b  $ with $\al$ a limit, we assume inductively that $\A_k(\xi')$ is defined for all $\xi'<\xi$ and define $\A_k(\xi)$ by $k$-fold iteration along the fundamental sequence of $ \al$ applied to $\A_2( \xi-1)$: for example,
\[
\A_2(\xi) = \A_2 \Big ( \fs{ \al}{\A_2\big (\fs{ \al}{\A_2( \xi-1)}\big ) }\Big).
\]
$\A_3( \xi)$ is defined with an additional nesting, and in general
\[\A_k( \xi)  =  \A_k \Big ( \fs{ \al}{\A_k\big (\ldots \fs{ \al}{\A_k( \xi-1)} \ldots \big ) }\Big),\]
with $k+1$ nested occurrences of $\A_k$.
Note that we do not assume that $b=0$, in which case the notation $\A_k( \xi-1)$ is defined as the {\em maximal coefficient,} $\mc\xi$, of $\xi$; that is, the largest $n$ so that there is a sub-expression of the form $\om^\ga\cdot n$ in the Cantor normal form of $\xi$.
For example, the maximal coefficient of $\om$ is $1$, so that
\[
\A_2(\om) = \A_2 \Big ( \fs{ \om}{\A_2\big (\fs{ \om}{1}\big ) }\Big) = 1+1+1 = 3.
\]
A rigorous definition of these functions is given in Definition \ref{defA}, and their basic properties are established throughout Section \ref{secAFun}.

We can then use the $\A_k$ functions to write natural numbers in various ways; we can write $3=\A_2(\om)$ as above, or simply $3=\A_2\Big(\A_2 \big(\A_2 (0) \big ) \Big)$.
For any such expression, we may change the base from $2$ to $3$ by replacing every subindex $2$ by $3$; more generally, we define $\bch t{\bases k\ell}$ to be the result of replacing every subindex $k$ by some $\ell>k$ in a term $t$ built up from $0$, $\A_k$, and ordinals below $\ve_0$.
We may even use the function $\A_k$ itself in the notations for these ordinals and write e.g.~$\A_2(\om^{\A_2(\om)})$ instead of $\A_2(\om^{3})$.
Note that the value of this term is already quite large, and the value of $\bch {\A_2(\om^{\A_2(\om)}) }{\bases 23} = \A_3(\om^{\A_3(\om)}) $ is much larger.
With these elements, we may define a {\em Goodstein walk} to be any sequence of natural numbers $m_0,m_1,\ldots$ such that $m_{i+1}$ is obtained by choosing a base-$(i+2)$ term $t_i$ with value $m_i$ and letting $m_{i+1}$ be $\bch {t_i}{\bases {i+2}{i+3}} - 1$.

Our first main result is that any Goodstein walk is finite.
We prove this by choosing the terms $t_i$ in a canonical way, called the {\em normal form of $m_i$,} and showing that the Goodstein process based on these normal forms always terminates in finite time.
Moreover, it gives the longest possible termination time over any other choice of terms.
The normal form for $m$ is selected via a sequence $\xi_0,\ldots,\xi_n$ of ordinals below $\ve_0$ such that $\xi_0 = 0$ and $\A_k(\xi_n) = m$, and for each $i<n$ we have that $\xi_{i+1}$ is the maximal ordinal such that $\A_k(\xi_{i+1}) \leq m$ and $\mc{\xi_{i+1}} \geq \A_k(\xi_i)$.
The intuition is that very large ordinals are nested within the coefficients in the notation for $m$, and these lead to a maximal increase after base change, even if $\xi_n$ itself is relatively small as an ordinal.
In Section~\ref{secNF} we define these normal forms rigorously (see Definition \ref{defNF}) and in Section~\ref{secBC} we study the base change operation based on these normal forms.
With this, we show in Section~\ref{secMax} that our normal forms are indeed base-change maximal (Theorem \ref{theoMax}).
In preparation for our termination proof, Section~\ref{secNorm} shows that normal forms are preserved under base-change.
Section~\ref{secWalk} then uses these results to establish that the Goodstein process based on these normal forms are finite (Theorem \ref{theoGood}) and, by maximality, all Goodstein walks based on the $\A_k$ functions are finite (Theorem \ref{theoWalk}).

The proof of termination uses an ordinal assignment, where we define an additional function $\A_\om$ based on the $\vartheta$ function~\cite{BuchholzOrd}.
Let us denote the first uncountable ordinal by $\Om$, so that the next $\ve$-number is $\ve_{\Om+1}$.
The function $\A_\om$ maps $\ve_{\Om+1}$ to $\Om$, providing notations for the Bachmann-Howard ordinal; this function is studied in~\cite{FWTheta}, where it is denoted $\sigma$.
We extend the base-change operation by letting $\bch t{\bases k\om}$ be the result of replacing every occurrence of $\om$ by $\Om$ and every occurrence of $\A_k$ by $\A_\om$.
The ordinals thus assigned to elements of the Goodstein process are decreasing, from which we deduce that the Goodstein process based on normal forms is terminating.
All other Goodstein walks are also terminating by maximality, even if they do not happen to be decreasing with respect to this ordinal assignment.

The rest of the paper is devoted to establishing that this new Goodstein principle is independent of $\sf KP$ (Theorem~\ref{theoGoodInc}); Section~\ref{secThetaFun} reviews fundamental sequences for the Bachmann-Howard ordinal, and Section~\ref{secInd} shows that the Goodstein process terminates more slowly than the process of stepping down these fundamental sequences, from which independence follows by classic results in proof theory.
Section~\ref{secConc} provides some concluding remarks and open questions.

\section{Ordinals below $\ve(\kappa)$}\label{secOrd}


In this section, we review some elementary notions from ordinal arithmetic.
Some notions covered here are only used much later in the text, so the reader may prefer to skim this section and return to it as a reference.
We assume basic familiarity with ordinal addition, multiplication, and exponentiation.
The predecessor of $\al$ will be denoted $\al-1$, when it exists.
We will use the following simple inequalities, proven by routine induction.

\begin{lemma}\label{lemOrdIneq}
Let $\al,\be$ be ordinals.
\begin{enumerate}

\item If $\al,\be> 1$ then $\al+\be\leq \al\be$.

\item If $\al>1$ then $\be\leq \al^\be$.

\end{enumerate}
\end{lemma}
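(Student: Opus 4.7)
The plan is to handle both parts by transfinite induction on $\beta$, taking care of the boundary cases where one of the ordinals equals $2$ or $0$.

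For (1), I would induct on $\beta \geq 2$. The base case $\beta = 2$ amounts to $\alpha + 2 \leq \alpha + \alpha$, which holds since $\alpha \geq 2$. For the successor step with $\beta = \gamma + 1$, I split on whether $\gamma = 1$ (which reduces to the base case) or $\gamma \geq 2$: in the latter case the induction hypothesis gives $\alpha + \gamma \leq \alpha\gamma$, whence
\[
\alpha + \beta = (\alpha + \gamma) + 1 \leq \alpha\gamma + 1 < \alpha\gamma + \alpha = \alpha\beta.
\]
For the limit step, $\beta$ is necessarily $\geq \omega$, so
\[
\alpha + \beta \;=\; \sup_{2 \leq \gamma < \beta}(\alpha + \gamma) \;\leq\; \sup_{2 \leq \gamma < \beta} \alpha\gamma \;=\; \alpha\beta,
\]
using the induction hypothesis at each $\gamma \geq 2$ and continuity of ordinal addition and multiplication in the second argument.

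For (2), I would again induct on $\beta$. The case $\beta = 0$ is immediate since $0 \leq 1 = \alpha^0$. For the successor step with $\beta = \gamma+1$, the induction hypothesis yields $\gamma \leq \alpha^\gamma$, and since $\alpha \geq 2$,
\[
\alpha^\beta = \alpha^\gamma \cdot \alpha \geq \alpha^\gamma + \alpha^\gamma \geq \gamma + 1 = \beta,
\]
with the corner case $\gamma = 0$ handled by $\alpha^1 = \alpha \geq 2 \geq 1$. For the limit step, continuity of exponentiation in the exponent gives
\[
\beta \;=\; \sup_{\gamma < \beta} \gamma \;\leq\; \sup_{\gamma < \beta} \alpha^\gamma \;=\; \alpha^\beta.
\]

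There is no real obstacle here; the only thing requiring any attention is that the successor step of (1) breaks down when $\gamma = 1$, so that case must be reduced to the base case, and similarly the limit case of (1) requires restricting the supremum to $\gamma \geq 2$, which is permissible because limit ordinals are at least $\omega$. Everything else follows from standard monotonicity and continuity properties of ordinal arithmetic.
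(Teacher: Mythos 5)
Your proof is correct, and it follows exactly the route the paper intends: the paper states these inequalities as "proven by routine induction," and your transfinite induction on $\beta$ (with the small boundary adjustments at $\gamma=1$ and in the limit case) is precisely that routine argument. Nothing further is needed.
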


Throughout the text, we will use normal forms for ordinals based on either $\om$ or $\Om$, the first uncountable ordinal.
Let $\kappa\in \{\om,\Om\}$ and $\xi>0$ be an ordinal.
There exist unique ordinals $\al,\be,\ga$ with $\be<\kappa$ such that $ \xi=\kappa^\al\be+\ga$ and $\ga<\kappa^\al$.
This is the {\em $\kappa$-normal form of $\xi$.}
Note that the $\omega$-normal form of $\xi$ is not precisely its Cantor normal form, as Cantor normal forms do not involve coefficients.
For our purposes, we simply say that an expression $\al+\be$ is in {\em Cantor normal form} if $\al+\be>\al'+\be $ for all $\al'<\al$.

The ordinal $\ve(\kappa)$ is defined as the least $\ve>\kappa$ such that $\ve=\om^\ve$.
If $\xi=\kappa^\al\be+\ga <\ve(\kappa)$ is in $\kappa$-normal form, then we may also deduce that $\al<\xi$.
We write $\ve_0=\ve(\om)$ and $\ve_{\Om+1} =\ve(\Om)$ as is standard.
Define $\kappa_0(\alpha) = \alpha$ and $\kappa_{i+1}(\alpha) = \kappa^{\kappa_i(\alpha)}$.
We set $\kappa_i = \kappa_i(1)$.
Then, $\ve(\kappa) = \sup_{n<\om}\kappa_i$.

Next we define fundamental sequences.
We will reserve the more standard notation $\xi[n]$ for arbitrary systems of fundamental sequences, whereas the notation $\fs\xi n_\kappa$ refers exclusively to the operation defined below.

\begin{definition}
Let $\kappa$ be an ordinal and $\ve=\ve(\kappa)$ the least $\ve$-number above $\kappa$.
We define fundamental sequences $\big (\fs \xi\theta_\kappa \big )_{\theta<\ve}$ and $\theta<\kappa$ recursively as follows, where we assume that $\xi$ is written in $\kappa$-normal form.
\begin{enumerate}

\item $\fs 0\theta_\kappa = \fs 1\theta_\kappa = 0$,

\item $ \fs{ \kappa^\alpha\be + \ga }{\theta}_\kappa =  \kappa^\alpha\be + \fs{ \ga }{\theta}_\kappa $ if $\ga>0$,

\item $ \fs{ \kappa^\alpha\be   }{\theta}_\kappa =  \kappa^\alpha\theta $ if $\be$ is a limit,

\item $ \fs{ \kappa^{\al+1}   }{\theta}_\kappa =  \kappa^\al \theta $,

\item $ \fs{ \kappa^\alpha(\be+1)   }{\theta}_\kappa =  \kappa^\al \be +  {\fs {\kappa^\al}\theta_\kappa } $ if $\be>0$, and

\item $ \fs{ \kappa^\alpha }{\theta}_\kappa =  \kappa^{\fs \al\theta_\kappa } $ if $\al$ is a limit.

\end{enumerate}
\end{definition}

We write simply $\fs\al\theta$ when $\kappa=\om$.
We define the set of coefficients of $\al$ in coefficient $\kappa$-normal form by $\coeffsk\kappa 0 = \{0\}$, $\coeffsk\kappa {\kappa^{\alpha}\theta+\beta} = \coeffsk\kappa {\al} \cup \coeffsk\kappa \be \cup \{\theta\}$.
The maximal coefficient of $\alpha$ is given by $\mck\kappa\al=\max\coeffsk\kappa\al$; we omit the subindex $\kappa$ when $\kappa=\om$.

\begin{definition}\label{defTau}
Let $\xi<\ve(\kappa)$ be in $\kappa$-normal form.
The {\em terminal part} of $\xi$, denoted $\tau_\kappa(\xi)$ or $\tau(\xi)$ when clear from context, is given recursively by
\begin{enumerate}

\item $\tau(0) = 0$ and $\tau(\zeta+1) = 1$,

\item $\tau (\kappa^\al \be+\ga) = \tau  (\ga)$ if $\ga>0$,

\item $\tau (\kappa^\al \be ) = \be$ if $\be$ is a limit,

\item $\tau (\kappa^\al (\be+1) ) = \tau(\al)$ if $\al$ is a limit, and

\item $\tau(\kappa^ {\al+1}(\be+1)) = \kappa$.

\end{enumerate}
\end{definition}

It is readily checked that if $\tau = \tau(\xi)$ is a limit and $\theta<\tau $ then $\fs\xi\theta_\kappa <\xi$, and moreover $\xi = \lim_{\theta\to \tau} \fs\xi\theta_\kappa$.
Note that we have defined $\fs\xi\theta_\kappa$ even when $\theta\geq \tau$, for notational convenience.

The following is checked by induction on $\xi$.

\begin{lemma}\label{lemmFundProp}
Let $\lambda < \ve(\kappa)$ be a limit ordinal and $\theta<\kappa$.
\begin{enumerate}

\item\label{itBoundMC} $\theta\leq \mc {\fs\lambda\theta} \leq \max \{\mc \lambda,\theta  \}$.

\item\label{itFundPropMajor} If $\xi < \lambda$ and $\mck\kappa \xi < \theta  $, then $\xi < \fs\la\theta_\kappa$.

\item Whenever $ \theta < \eta<\kappa$, it follows that $\fs\la\theta_\kappa < \fs\la\eta$.

\end{enumerate}
\end{lemma}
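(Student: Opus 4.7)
The plan is to establish all three items simultaneously by transfinite induction on $\lambda$, following the recursive definition of $\fs{\cdot}{\theta}_\kappa$. Since each clause of the definition rewrites $\fs\lambda\theta_\kappa$ in terms of $\fs{\lambda'}{\theta}_\kappa$ for some $\lambda' < \lambda$ (or reduces to an ordinal of strictly smaller syntactic complexity), the induction is well-founded.

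For item (1), the argument is a direct case analysis on the normal form $\lambda = \kappa^\al \be + \ga$. In the clauses where the definition exposes $\theta$ explicitly as a new coefficient (clauses 3, 4, and the $\fs{\kappa^\al}\theta$ that appears in clause 5), the inequality $\theta \leq \mc{\fs\lambda\theta_\kappa}$ is immediate, and the upper bound follows from the observation that the only coefficients in $\fs\lambda\theta_\kappa$ are either already in $\coeffsk\kappa\lambda$ or equal to $\theta$; in the exponent case (clause 6), we apply the inductive bound to $\al$, using that $\mc$ of $\kappa^{\fs\al\theta_\kappa}$ equals $\mc{\fs\al\theta_\kappa}$. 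The sum case (clause 2) is handled by applying the IH to $\ga$.

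For item (3), monotonicity of $\fs\lambda{\cdot}_\kappa$ in its second argument, induction on $\lambda$ reduces to: clauses 3 and 4 become $\kappa^\al\theta < \kappa^\al\eta$, which holds by strict monotonicity of ordinal multiplication; clause 2 follows from the IH on $\ga$; clause 5 combines $\kappa^\al\be + \fs{\kappa^\al}\theta_\kappa < \kappa^\al\be + \fs{\kappa^\al}\eta_\kappa$ by the IH on $\kappa^\al$; and clause 6 uses the IH on $\al$, noting that $\kappa^{\fs\al\theta_\kappa} < \kappa^{\fs\al\eta_\kappa}$ because $\fs\al\theta_\kappa < \fs\al\eta_\kappa$.

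Item (2) is the main obstacle and needs a more careful case split on both $\lambda$ and the $\kappa$-normal form of $\xi$. Writing $\lambda = \kappa^\al\be + \ga$, if $\ga > 0$ we either have $\xi < \kappa^\al\be \leq \fs\lambda\theta_\kappa$ or $\xi = \kappa^\al\be + \xi'$ with $\xi' < \ga$ and $\mck\kappa{\xi'} \leq \mck\kappa\xi < \theta$, and the IH applied to $\ga$ gives $\xi' < \fs\ga\theta_\kappa$. If $\ga = 0$ and $\be$ is a limit, writing $\xi = \kappa^{\al'}\be' + \xi''$ in $\kappa$-normal form we use $\be' \leq \mck\kappa\xi < \theta$, together with $\al' \leq \al$, to conclude $\xi < \kappa^\al(\be'+1) \leq \kappa^\al\theta = \fs\lambda\theta_\kappa$. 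The delicate subcase is $\lambda = \kappa^\al$ with $\al$ a limit (clause 6), where we need $\al' < \fs\al\theta_\kappa$ for the leading exponent $\al'$ of $\xi$; since exponents contribute to $\coeffsk\kappa\xi$, we have $\mck\kappa{\al'} \leq \mck\kappa\xi < \theta$, and the IH of item (2) at $\al$ yields $\al' < \fs\al\theta_\kappa$, whence $\xi < \kappa^{\al'+1} \leq \kappa^{\fs\al\theta_\kappa} = \fs\lambda\theta_\kappa$. The remaining successor subcases (clauses 4 and 5) are handled by the same template, either directly or by invoking the IH on $\kappa^\al$. The main conceptual point to flag is that $\coeffsk\kappa{\cdot}$ is closed under taking sub-exponents, which is what licenses the recursive application of item (2) inside exponents.
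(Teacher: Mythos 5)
Your proof is correct and takes essentially the route the paper intends: the paper merely asserts that the lemma ``is checked by induction'' on the ordinal, and your transfinite induction on $\lambda$ with a case split along the clauses defining $\fs{\cdot}{\cdot}_\kappa$ (recursing into the exponent in clause 6, which is legitimate since $\al<\kappa^\al$ below $\ve(\kappa)$) is exactly that verification written out. One small repair in item (1): the side remark that every coefficient of $\fs\lambda\theta_\kappa$ already lies in $\coeffsk\kappa\lambda\cup\{\theta\}$ is not literally true (clause 5 introduces the coefficient $\be$ where $\lambda$ carries $\be+1$, and clause 4 replaces the exponent $\al+1$ by $\al$, whose coefficient set need not be contained in that of $\al+1$), but the weaker fact you actually need --- every coefficient of $\fs\lambda\theta_\kappa$ is at most $\max\{\mck\kappa\lambda,\theta\}$, using $\be\le\be+1$ and $\mck\kappa\al\le\mck\kappa{\al+1}$ --- holds by the same induction, so the stated bound is unaffected.
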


The fundamental sequences admit a sort of left inverse, given by the following operation.

\begin{definition}\label{defCeil}
Define $\lceil \xi\rceil_\kappa$ where, $\xi $ is written in $\kappa$-normal form, by 
\begin{enumerate}

\item $\lceil 0 \rceil_\kappa = 0$,

\item $\lceil \kappa^\al \be+\ga\rceil_\kappa = \kappa^\al \be+\lceil \ga\rceil_\kappa$ if $\ga>0$,

\item $\lceil \kappa^\al \be \rceil_\kappa = \kappa^{\al+1}$ if $\be > 1$, and

\item $\lceil \kappa^\al  \rceil_\kappa = \om^{\lceil \al  \rceil_\kappa}$.

\end{enumerate}
\end{definition}

As before, we omit the subindex when $\kappa=\om$.

\begin{lemma}\label{lemmCeilProp}
If $\theta \in(1,\kappa)$ and $\xi$ is a limit with $\tau(\xi) = \kappa$ then $\lceil \fs\xi \theta_\kappa \rceil_\kappa = \xi$.
\end{lemma}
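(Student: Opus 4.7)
My plan is to prove this by induction on $\xi$. Write $\xi$ in $\kappa$-normal form as $\xi=\kappa^\al\be+\ga$ with $\be<\kappa$ and $\ga<\kappa^\al$, and perform a case analysis following the defining clauses of $\fs\cdot\theta_\kappa$. The condition $\tau(\xi)=\kappa$ is essential: by inspecting Definition~\ref{defTau}, it rules out the case $\ga=0$ with $\be$ a limit (where $\tau(\xi)=\be<\kappa$), and in the remaining cases it either holds automatically, is inherited by $\ga$, or is inherited by $\al$.

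The cases split as follows. If $\ga>0$, then clause~2 gives $\fs\xi\theta_\kappa=\kappa^\al\be+\fs\ga\theta_\kappa$; here $\tau(\ga)=\kappa$, so $\ga$ is itself a limit falling under the hypothesis of the lemma. Lemma~\ref{lemmFundProp}(\ref{itBoundMC}) ensures $\fs\ga\theta_\kappa\geq\theta>0$, so clause~2 of Definition~\ref{defCeil} applies and the induction hypothesis on $\ga$ yields $\lceil\fs\xi\theta_\kappa\rceil_\kappa=\kappa^\al\be+\ga=\xi$. If $\ga=0$ and $\be=\be'+1$ is a successor with $\al$ a successor $\al'+1$, then clauses~5 and~4 give $\fs\xi\theta_\kappa=\kappa^\al\be'+\kappa^{\al'}\theta$; since $\theta>1$, clause~3 of Definition~\ref{defCeil} computes $\lceil\kappa^{\al'}\theta\rceil_\kappa=\kappa^{\al'+1}=\kappa^\al$, recovering $\xi$. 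If instead $\al$ is a limit, clauses~5 and~6 give $\fs\xi\theta_\kappa=\kappa^\al\be'+\kappa^{\fs\al\theta_\kappa}$; here $\tau(\xi)=\tau(\al)=\kappa$, so the induction hypothesis applies to $\al$ and clause~4 of Definition~\ref{defCeil} gives $\lceil\kappa^{\fs\al\theta_\kappa}\rceil_\kappa=\kappa^{\lceil\fs\al\theta_\kappa\rceil_\kappa}=\kappa^\al$, whence $\lceil\fs\xi\theta_\kappa\rceil_\kappa=\kappa^\al(\be'+1)=\xi$. The remaining sub-cases, $\xi=\kappa^{\al'+1}$ (clause~4) and $\xi=\kappa^\al$ with $\al$ a limit (clause~6), are handled by the same computations specialized to $\be=1$.

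The proof is essentially a bookkeeping exercise matching clauses of Definition~\ref{defCeil} against clauses of the fundamental-sequence definition, so there is no single hard step. The two points requiring mild care are (a) verifying that $\tau(\xi)=\kappa$ correctly propagates to the sub-ordinal on which induction is applied, and (b) ensuring that the intermediate ordinals produced by $\fs\cdot\theta_\kappa$ remain in $\kappa$-normal form so that the relevant clause of Definition~\ref{defCeil} is applicable; the latter follows from the fact that $\fs\ga\theta_\kappa<\ga<\kappa^\al$ and $\fs\al\theta_\kappa<\al$, combined with the hypothesis $\theta<\kappa$ (which keeps coefficients below $\kappa$). The hypothesis $\theta>1$ is used exactly once, but crucially, to force clause~3 of Definition~\ref{defCeil} rather than clause~4 when reconstructing $\kappa^{\al'+1}$ from $\kappa^{\al'}\theta$.
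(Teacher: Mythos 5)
Your proof is correct and follows essentially the same route as the paper's: induction on $\xi$ with a case split along the clauses defining $\fs\cdot\theta_\kappa$, using $\tau(\xi)=\kappa$ to exclude the limit-coefficient case and $\theta>1$ to recover $\kappa^{\al'+1}$ from $\kappa^{\al'}\theta$. The paper only sketches the critical case $\xi=\kappa^\al$, whereas you spell out all cases, but the argument is the same.
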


\begin{proof}
We sketch the proof, which proceeds by induction on $\xi$.
The critical case is where $\xi = \kappa^{\al}  $.
If $\al$ is a limit, then $\fs{\kappa^\al }\theta_\kappa =  \kappa^{\fs\al\theta_\kappa} $, so that  $\lceil\fs{\kappa^\al }\theta_\kappa \rceil_\kappa =  \kappa^{\lceil \fs\al\theta_\kappa\rceil_\kappa} $.
By induction hypothesis, $\lceil \fs\al\theta_\kappa\rceil_\kappa = \al$, so that 
$\lceil\fs{\kappa^\al  }\theta_\kappa \rceil_\kappa =  \kappa^\al$.
Otherwise, $\al=\al'+1$ and $\fs{\kappa^{\al'+1}  }\theta_\kappa = \kappa ^{\al'}\theta $.
So, $\lceil \fs{\kappa^{\al'+1}  }\theta_\kappa \rceil_\kappa = \kappa^{\al'+1}$.
\end{proof}

As a corollary, we obtain that fundamental sequences are injective in the following sense.

\begin{corollary}\label{corFSInj}
If $\tau_\kappa(\al) = \tau_\kappa (\al') = \kappa$ and $\theta,\theta'\in (1,\kappa)$ are such that $ \fs\al\theta_\kappa = \fs{\al'}{\theta'}_\kappa$, then $\al=\al' $. 
\end{corollary}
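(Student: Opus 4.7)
The plan is to recognize this as an immediate consequence of Lemma \ref{lemmCeilProp}, which says that $\lceil\cdot\rceil_\kappa$ acts as a left inverse to the fundamental sequence operation on the relevant class of ordinals. Since the hypothesis gives $\tau_\kappa(\al) = \tau_\kappa(\al') = \kappa$, and $\kappa \in \{\om,\Om\}$ is a limit, both $\al$ and $\al'$ are limit ordinals whose terminal parts equal $\kappa$, which is exactly the setup required to invoke the lemma. Moreover, the hypothesis $\theta,\theta'\in(1,\kappa)$ matches the parameter restriction.

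The argument then collapses to a single chain of equalities: by Lemma \ref{lemmCeilProp} applied to $\al$ with parameter $\theta$, we have $\lceil \fs\al\theta_\kappa\rceil_\kappa = \al$; by the same lemma applied to $\al'$ with parameter $\theta'$, we have $\lceil \fs{\al'}{\theta'}_\kappa\rceil_\kappa = \al'$. Combining these with the hypothesis $\fs\al\theta_\kappa = \fs{\al'}{\theta'}_\kappa$ yields
\[
\al \;=\; \lceil \fs\al\theta_\kappa\rceil_\kappa \;=\; \lceil \fs{\al'}{\theta'}_\kappa\rceil_\kappa \;=\; \al',
\]
which is the desired conclusion.

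There is essentially no obstacle here: the real work has already been absorbed into Lemma \ref{lemmCeilProp}. The only thing worth emphasizing in the write-up is why the lemma applies to both sides, namely that the condition $\tau_\kappa(\al)=\kappa$ implies $\al$ is a limit of the right form (in particular $\al\neq 0$ and $\al$ is not a successor, so case (1) of Definition \ref{defTau} is excluded and the recursive clauses (2)--(5) are what determine $\tau_\kappa(\al)$). No case analysis on the shape of $\al$ is needed at the level of the corollary itself, since that analysis is already performed in the proof of Lemma \ref{lemmCeilProp}.
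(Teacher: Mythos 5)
Your proposal is correct and is essentially identical to the paper's own proof, which likewise applies Lemma \ref{lemmCeilProp} to both sides of the equality $\fs\al\theta_\kappa = \fs{\al'}{\theta'}_\kappa$ and concludes $\al = \lceil \fs{\al}{\theta}_\kappa\rceil_\kappa = \lceil \fs{\al'}{\theta'}_\kappa\rceil_\kappa = \al'$. Your remark that $\tau_\kappa(\al)=\kappa$ forces $\al$ to be a limit (so the lemma's hypotheses are met) is a small point the paper leaves implicit, and is fine to include.
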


\begin{proof}
By Lemma \ref{lemmCeilProp}, we have that
$\al = \lceil \fs{\al}{\theta}_\kappa\rceil_\kappa = \lceil \fs{\al'}{\theta'}_\kappa\rceil_\kappa  = \al'$.
\end{proof}

Next we turn our attention to $\kappa=\om$. Here, the {\em Bach\-mann property} holds for the system of fundamental sequences.
It is convenient to define this notion with some generality.

\begin{definition}
Let $\Lambda$ be a countable ordinal. A {\em system of fundamental sequences on $\Lambda$} is a function $\cdot [\cdot] \colon \Lambda \times \mathbb N \to \Lambda$ such that
\begin{enumerate}

\item  $  \alpha [n] \leq \alpha$ with equiaity holding if and only if $\alpha = 0$,

\item $\alpha[n] < \alpha[m]$ whenever $n< m$, and 

\item $\lambda  = \displaystyle \lim_{n\to \infty} \lambda[n]$ whenever $
\lambda $ is a limit.

\end{enumerate}
The system of fundamental sequences has the {\em Bach\-mann property} if whenever $\alpha[n] < \beta < \alpha$, it follows that $\alpha[n] \leq \beta[1]$.
\end{definition}

The fundamental sequences we have defined are known to enjoy the Bach\-mann property~\cite{Schmidt77}.

\begin{lemma}
If $\la<\ve_0$ is a limit and $\xi \in \big( \fs\la n ,\lambda\big )$, it follows that $\fs\la n \leq \fs\xi 1$.
\end{lemma}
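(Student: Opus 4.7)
The plan is to proceed by transfinite induction on $\la$, following the clauses of the definition of $\fs\la n$. For each clause, the constraint $\fs\la n < \xi < \la$ will pin down the $\om$-normal form of $\xi$ enough that $\fs\xi 1$ can be compared with $\fs\la n$ either directly or via the induction hypothesis.

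First I would treat the two straightforward recursive cases. If $\la = \om^\al b + \ga$ with $\ga > 0$ (hence a limit), then $\xi$ is forced to take the form $\om^\al b + \xi'$ with $\xi' \in (\fs\ga n,\ga)$, and I would apply the induction hypothesis to $\ga$ when $\xi'$ is a limit, or observe that $\xi' - 1 \geq \fs\ga n$ when $\xi'$ is a successor, to conclude $\fs\xi 1 \geq \fs\la n$ in both subcases. The case $\la = \om^\al(b+1)$ with $b > 0$ is handled in parallel, invoking the induction hypothesis at the strictly smaller limit $\om^\al$. The case $\la = \om^{\al+1}$ does not need the induction hypothesis at all: a direct split on whether the tail of the $\om$-normal form of $\xi$ vanishes gives the bound, using clause (5) of the definition in the subcase $\xi = \om^\al b$ with $b \geq 2$.

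The main case, and the one I expect to require the most care, is $\la = \om^\al$ with $\al$ a limit, so that $\fs\la n = \om^{\fs\al n}$. Here $\xi$ has $\om$-normal form $\om^\be b + \eta$ with $\fs\al n \leq \be < \al$. When $\be > \fs\al n$, the trick is to invoke the induction hypothesis at $\al$ itself, which is strictly smaller than $\om^\al = \la$ since $\la < \ve_0$, in order to obtain the Bachmann-style inequality $\fs\al n \leq \fs\be 1$, and then to lift this through the exponential to $\om^{\fs\al n} \leq \fs\xi 1$ via a short subcase analysis on whether $\be$ is a limit or a successor. The remaining edge case $\be = \fs\al n$ forces $b \geq 2$ or $\eta > 0$, from which $\fs\xi 1 \geq \om^\be = \fs\la n$ follows immediately.

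The main obstacle is precisely this exponent-level case: the statement must be proved in its full Bachmann form in order for the induction to go through, since a weaker majorisation bound on $\fs\be 1$ would be lost under exponentiation. Once this is set up, the remaining work is routine bookkeeping on $\om$-normal forms.
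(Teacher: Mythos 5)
Your sketch is correct: the case split follows the clauses of the definition of $\fs\cdot\cdot$ exactly (tail case, coefficient case, $\om^{\al+1}$, and $\om^\al$ with $\al$ a limit), the successor-$\xi'$ subcases are handled by $\fs{\zeta+m}{1}=\zeta+(m-1)$, and in the critical case $\la=\om^\al$ with $\al$ a limit the appeal to the induction hypothesis at $\al$ is legitimate because $\al<\om^\al=\la$ for $\la<\ve_0$; your closing remark that the full Bachmann form (rather than a mere majorisation bound) is what survives exponentiation is exactly the right observation. The one point of comparison with the paper: the paper does not prove this lemma at all. It states it as an instance of a known fact, citing Schmidt's work on built-up systems of fundamental sequences for the Bachmann property, so there is no in-paper argument to match yours against. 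What your route buys is a short, self-contained verification tailored to this particular assignment of fundamental sequences, at the cost of re-deriving a classical result; what the paper's citation buys is brevity and the reassurance that the property holds for a general class of norm-based/standard fundamental sequence systems, not just the one defined in Section 3. If you write your argument out in full, do make the edge subcases explicit (in particular $\xi=\om^\be$ with $\be=\be'+1$, where you need only $\be'\geq\fs\al n$, and the $\be=\fs\al n$ boundary case), since that is where a compressed sketch most easily hides an off-by-one.
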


Finally, we want to observe that if $\fs \be q\leq \al\leq \be$, then we can obtain information about the coefficients of $\be$ from those of $\al$.
To make this precise, we first define a {\em truncation} of $\al$ to be any $\be\leq \al$ such that $\coeffs\be\subseteq \{0,1\} \cup \coeffs \al$.

\begin{lemma}\label{lemTrunc}
Let $\al,\be<\ve_0$ with $\be$ a limit.
If $\fs \be q\leq \al\leq \be$, there is a truncation $\al'$ of $\al$ and some $q'\geq q$ such that $\al'=\fs\be{q'}$.
\end{lemma}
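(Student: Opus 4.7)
The plan is to proceed by transfinite induction on $\be$, splitting into cases according to the clause of the recursive definition of $\fs\be\theta$ that applies to the $\om$-normal form $\be = \om^\zeta b + \ga$. Since $\be$ is a limit, either $\ga > 0$ (in which case $\ga$ is itself a limit), or $\ga = 0$ and one of the following holds: $b = b'+1 > 1$, $\be = \om^{\zeta+1}$, or $\be = \om^\zeta$ with $\zeta$ a limit.

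In the first two situations, $\fs\be\theta$ has the form $\sigma + \fs\mu\theta$ for a fixed prefix $\sigma$ independent of $\theta$ and a smaller limit ordinal $\mu < \be$---namely $\mu = \ga$ when $\ga > 0$, and $\mu = \om^\zeta$ when $\be = \om^\zeta(b'+1)$. The hypothesis $\fs\be q \leq \al \leq \be$ forces $\al = \sigma + \al_1$ with $\fs\mu q \leq \al_1 \leq \mu$, so the induction hypothesis applied to $\mu$ yields a truncation $\al_1' = \fs\mu{q'}$ of $\al_1$ with $q' \geq q$; we then set $\al' := \sigma + \al_1'$. By construction $\al' = \fs\be{q'}$, and $\al'$ is a truncation of $\al$ because $\coeffs{\al'}$ differs from $\coeffs{\sigma + \al_1}$ only through the inductive replacement of $\al_1$ by $\al_1'$, whose coefficients lie in $\{0,1\}\cup\coeffs{\al_1}\subseteq\{0,1\}\cup\coeffs\al$.

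The remaining two cases are the genuine base cases. When $\be = \om^{\zeta+1}$, we have $\fs\be\theta = \om^\zeta\theta$, and the bound $\om^\zeta q \leq \al$ forces the $\om$-normal form of $\al$ to begin with a term $\om^\zeta c$ satisfying $c \geq q$; setting $\al' := \om^\zeta c$ and $q' := c$ gives $\al' = \fs\be{q'}$, a truncation of $\al$ whose coefficients are all drawn from $\coeffs\al$. When $\be = \om^\zeta$ with $\zeta$ a limit, $\fs\be\theta = \om^{\fs\zeta\theta}$, and from $\om^{\fs\zeta q} \leq \al \leq \om^\zeta$ we extract the leading exponent $\delta$ of $\al$'s $\om$-normal form, which must satisfy $\fs\zeta q \leq \delta \leq \zeta$. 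Since $\zeta < \be$, the induction hypothesis applied to $\zeta$ furnishes a truncation $\delta' = \fs\zeta{q'}$ of $\delta$ with $q' \geq q$, and taking $\al' := \om^{\delta'}$ gives $\al' = \fs\be{q'}$.

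The main delicate point is the coefficient bookkeeping: at every inductive step one must check that $\coeffs{\al'} \subseteq \{0,1\} \cup \coeffs\al$, and this is where the flexibility of allowing $0$ and $1$ to appear freely is essential, since the fundamental sequence operation introduces an auxiliary argument $q'$ which must either already appear as a coefficient of $\al$ or be absorbed into $\{0,1\}$. In the recursive cases this bookkeeping reduces to an application of the inductive guarantee, while in the base cases the required coefficients are read off directly from the normal form of $\al$. The boundary situations where $\al$ coincides with $\be$ can be absorbed by descending one further level along the fundamental sequence of $\be$ before entering the case analysis.
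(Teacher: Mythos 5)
Your proof is correct and follows essentially the same route as the paper's: induction on $\be$ with a case split on its $\om$-normal form mirroring the clauses of the fundamental-sequence definition, peeling off the fixed prefix and applying the induction hypothesis to the tail ($\ga$ or $\om^\zeta$) in the recursive cases, and reading the truncation off the leading term of $\al$ (respectively off its leading exponent, via the induction hypothesis on $\zeta$) in the two base cases $\be=\om^{\zeta+1}$ and $\be=\om^\zeta$ with $\zeta$ a limit. The only loose point is your closing remark about the boundary case $\al=\be$, which is left vague, but the paper's own argument likewise works only with $\al<\be$ in the relevant sub-cases, so this is not a divergence in substance.
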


\begin{proof}
By induction on $\be$.
Write $\be = \om^{\be_1} b +\be_0 $ and $\al = \om^{\al_1} a +\al_0 $ in $\omega$-normal form and consider the following cases.
\begincases

\item ($\be_0 > 0$).
Then we must have $\al = \om^{\be_1} b + \al_0 $ with $\fs {\be_0} q\leq \al_0 < \be_0$, which by the induction hypothesis yields a truncation $\gamma$ of $\al_0$ and $q'>q$ such that $\gamma=\fs{\be_0}{q'}$.
The desired truncation of $\al$ is then $\om^{\be_1} b +\ga$.

\item ($\be_0 = 0$).
Consider the following sub-cases.
\beginsubcases

\item ($b>1 $).
Then, $\al_1=\be_1$ and $b=a+1$.
Moreover, $\fs {\om^{\be_1}} q\leq \al_0 < \om^{\be_1}$, yielding a truncation $\gamma$ of $\al_0$ and $q'>q$ such that $\gamma=\fs{\om^{\be_1}}{q'}$.
It follows that $\om^{\al_1} a +\ga$ is the desired truncation of $\al $.

\item ($b = 1 $).
We consider two further sub-cases.
\beginsubcases

\item ($\be_1 = \de+1$).
Then, $\fs{\be}q = \om^\de q$, and from $\fs\be q\leq \al <\be$ we see that $\al = \om^{\be_1}q' + \al_0$ for some $q'\geq q$.
In this case, $\om^{\be_1}q' $ is a truncation of $\al$ and $\om^{\be_1}q'  = \fs\be{q'} $.

\item ($\be_1$ is a limit).
We have that $\fs {\om^{\be_1}} q\leq \al < \om^{\be_1}$, so that $\fs {\om^{\be_1}} q\leq \al_1 < \om^{\be_1}$.
Let $\gamma$ and $q'$ be the truncation and number given by the induction hypothesis for $\al_1$.
Then, $\om^\ga$ is the corresponding truncation of $\al$.\qedhere
\end{enumerate}
\end{enumerate}

\end{enumerate}
\end{proof}

\section{Parametrized fast-grow\-ing hierarchies}\label{secAFun}

We may use fundamental sequences to define very large natural numbers in terms of ordinals below $\ve_0$.
We introduce a version with an extra parameter $k$, which will serve as the `base' in our fast Goodstein walks.

\begin{definition}\label{defA}
For $2\leq\la\leq\om$, we define $\A_\la\colon \ve(\la^+)\to \la^+$ as follows.
First, introduce the abuse of notation $\A_k(\al-1) = \mc\al$, when $\al $ is a limit or zero ($\al-1$ itself remains undefined when $\al$ is a limit).
Suppose inductively that $\A_\la(\xi)$ is defined for all $\xi<\al$ and define $\A_\la(\xi)$ according to the following cases.

\begincases
\item ($\la<\om$).
We divide in two sub-cases.
\beginsubcases
\item ($\xi<\om$). Set $\A(\xi)=\xi +1$.

\item ($\xi\geq \om$). Write $\xi=\al+b$ with $\al$ a limit.
Define $\A_\la^{(i)}(\xi)$ recursively by
\begin{enumerate}
\item $\A_\la^{(0)}(\xi) = \A_\la(\xi-1)$ and

\item $\A_\la^{(i+1)}(\xi) = \A_\la(\fs\al{\A_\la^{(i+1)}(\xi) } )$.
\end{enumerate}
Then, set $\A_k(\xi) =  \A_k^{(k)} (\xi)$.

\item ($\la=\om$). $\A_\om(\xi)$ is inductively defined to be the least $\theta>\mck\Om \xi$ such that if $\zeta<\xi$ and $\mck \Om\zeta<\theta$ then $\A_\om(\zeta)<\theta$.
We further define $\A_\om^{(i)}(\xi) = \A_\om(\xi)$ for all $i$.
\end{enumerate}
\end{enumerate}

\end{definition}
We remark that $\A_\om$ is a variant of the $\vartheta$ function~\cite{BuchholzOrd}, except that $\A_\om(\xi)$ may take values that are not  principal numbers; this version of the $\vartheta$ function is denoted $\sigma$ in \cite{FWTheta}, where it is studied in some detail.
We also remark that our somewhat trivial definition of $\A_\om^{(i)}(\xi) $ will be useful for treating the cases $\la<\om$ and $\la=\om$ uniformly, for example in Lemma \ref{lemmMajorize}.

The main characterization we use of the $\A_\om$ function is the following.

\begin{proposition}\label{propAOrder}
If $\zeta,\xi<\ve_{\Om+1}$, then $\A_\om(\zeta)<\A_\om(\xi)$ if and only if $\zeta<\xi$ and $\mck\Om\zeta<\A_\om(\xi)$.
\end{proposition}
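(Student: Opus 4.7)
The plan is to derive both implications directly from the inductive defining property of $\A_\om$. The direction $(\Leftarrow)$ is essentially a restatement of the definition: since $\theta=\A_\om(\xi)$ is by construction the least ordinal with the property that $\zeta<\xi$ and $\mck\Om\zeta<\theta$ imply $\A_\om(\zeta)<\theta$, the hypotheses of the proposition immediately yield $\A_\om(\zeta)<\A_\om(\xi)$.

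For $(\Rightarrow)$, assume $\A_\om(\zeta)<\A_\om(\xi)$. One of the two conjuncts, namely $\mck\Om\zeta<\A_\om(\xi)$, is immediate from the chain $\mck\Om\zeta<\A_\om(\zeta)<\A_\om(\xi)$, where the first inequality is part of the definition of $\A_\om(\zeta)$. To establish $\zeta<\xi$, I would argue by contradiction, supposing $\zeta\geq\xi$. The case $\zeta=\xi$ is trivial. In the case $\zeta>\xi$, I would apply the defining clause of $\A_\om(\zeta)$ to the ordinal $\xi$: if $\mck\Om\xi<\A_\om(\zeta)$ held, it would follow that $\A_\om(\xi)<\A_\om(\zeta)$, contradicting our hypothesis. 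Hence $\mck\Om\xi\geq\A_\om(\zeta)$.

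The main obstacle is to rule out this residual configuration in which $\zeta>\xi$ but $\mck\Om\xi\geq\A_\om(\zeta)$. My plan is to appeal to the minimality of $\A_\om(\xi)$: any ordinal $\theta$ with $\mck\Om\xi<\theta<\A_\om(\xi)$ must fail the defining closure property, producing some $\zeta'<\xi$ with $\mck\Om\zeta'<\theta$ and $\A_\om(\zeta')\geq\theta$. Choosing $\theta$ judiciously, together with the structural fact that $\mck\Om\xi$ occurs as a literal coefficient within some subterm of the $\Om$-normal form of $\xi$ (by the recursive definition of $\coeffsk\Om{\,\cdot\,}$), should produce such a $\zeta'$ satisfying $\A_\om(\zeta')>\A_\om(\zeta)$. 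Framed as a transfinite induction on $\xi$, this setup should then yield the desired contradiction via the inductive hypothesis applied to the pair $(\zeta',\xi)$ or $(\zeta,\zeta')$.
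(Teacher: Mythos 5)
Your direction $(\Leftarrow)$ is indeed immediate from the minimality clause in Definition \ref{defA}, the inequality $\mck\Om\zeta<\A_\om(\zeta)<\A_\om(\xi)$ is fine, and your reduction of $(\Rightarrow)$ to excluding the configuration ``$\zeta>\xi$ and $\A_\om(\zeta)\leq\mck\Om\xi$'' is exactly the right bookkeeping. The problem is that your last paragraph is a plan rather than a proof (``choosing $\theta$ judiciously \ldots should produce \ldots should then yield''), and the plan cannot be completed from the definition alone, because the configuration you are trying to exclude is genuinely realizable. Compute directly from Definition \ref{defA}: for $\zeta<\Om$ one has $\mck\Om\zeta=\zeta$ and, by an easy induction, $\A_\om(\zeta)=\zeta+1$; on the other hand $\coeffsk\Om\Om=\{0,1\}$, so $\A_\om(\Om)$ is the least $\theta>1$ such that $\eta+1<\theta$ for every countable $\eta<\theta$, i.e.\ $\A_\om(\Om)=\om$. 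Thus $\A_\om(\Om)=\om<\om+1=\A_\om(\om)$ although $\Om>\om$; here $\zeta=\Om$, $\xi=\om$ satisfy $\zeta>\xi$ and $\A_\om(\zeta)=\om\leq\om=\mck\Om\xi$, which is precisely your ``residual configuration''. So no judicious choice of $\theta$, and no transfinite induction on $\xi$, can deliver the contradiction you are aiming for: at that point there is nothing contradictory to find.

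What does hold for $\vartheta$-like collapsing functions (including the $\sigma$ function of the cited work, of which $\A_\om$ is a variant) is the disjunctive comparison: $\A_\om(\zeta)<\A_\om(\xi)$ iff either ($\zeta<\xi$ and $\mck\Om\zeta<\A_\om(\xi)$) or ($\xi<\zeta$ and $\A_\om(\zeta)\leq\mck\Om\xi$); the case you want to rule out is exactly the second disjunct. Note that the paper states Proposition \ref{propAOrder} without giving a proof, and where it is invoked (e.g.\ in Lemma \ref{lemmMajorize}) only the $(\Leftarrow)$ direction is used, which you do establish correctly. To repair your $(\Rightarrow)$ argument you would have to either incorporate the second disjunct into the statement being proved, or work under an extra hypothesis that excludes it (for instance $\mck\Om\xi<\A_\om(\zeta)$); as written, the final step of your outline attempts to prove more than the definition supports.
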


Throughout the paper often write $\A (\xi) = \A_\lambda(\xi)$ whenever $\lambda$ is fixed and clear from context.

\begin{definition}\label{defPredek}
For $k\in[2,\om ) $ and $\xi = \alpha +b\in (0,\varepsilon_0)$ with $\al$ a limit, we define
$\predek \xi k =  \fs\al{\A^{(k-1)}(\xi)} $.
\end{definition}

It is readily checked that Definition \ref{defA} yields
\begin{equation}\label{eqRecurs}
\A_k(\xi) = \A_k(\predek \xi k)
\end{equation}
for all $\xi$.

The values $\A_k(\xi)$ grow rather quickly, but the following simple lower bound will be useful to us.
Recall that if $\al$ is zero or a limit, then $\A(\al-1) =\mc\al$ by definition.

\begin{lemma}\label{lemmBoundTwo}
Let $k\in[2,\om)$ and $\xi <\ve_0$ and write $\A$ for $\A_k$.
\begin{enumerate}

\item \label{itBoundTwoTwo} If $\xi>\om$ and $i<j<\om$, then $\A(\xi-1) \leq  \A ^{(i)}(\xi) < \A ^{(j)}(\xi)$.

\item \label{itBoundTwoOne} $  \mc \xi \leq \A(\xi-1) <\A (\xi)$.

\item \label{itBoundTwoThree} If $\xi >\om$ and $k \in [2,\om)$, then $\A_k(\xi) > 2\mc\xi +2$.

\end{enumerate}

\end{lemma}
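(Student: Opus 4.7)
The plan is to prove all three items by simultaneous transfinite induction on $\xi$. The base case $\xi<\om$ is immediate: $\A(\xi)=\xi+1$ and $\mc\xi=\xi$, so (2) reduces to $\mc\xi=\xi=\A(\xi-1)<\A(\xi)$, while (1) and (3) do not apply.

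For $\xi\geq\om$, write $\xi=\al+b$ with $\al$ a limit. I would first establish (1) by proving the one-step inequality $\A^{(i)}(\xi)<\A^{(i+1)}(\xi)$: setting $n=\A^{(i)}(\xi)$, we have $\fs\al n<\al\leq\xi$, and Lemma~\ref{lemmFundProp} combined with the inductive hypothesis of (2) applied to $\fs\al n$ gives
\[
n\;\leq\;\mc{\fs\al n}\;\leq\;\A(\fs\al n-1)\;<\;\A(\fs\al n)\;=\;\A^{(i+1)}(\xi).
\]
Iterating, and noting $\A^{(0)}(\xi)=\A(\xi-1)$, yields the full chain in (1). For (2), if $b=0$ then $\A(\xi-1)=\mc\xi$ by the abuse of notation, and (1) applied to $\xi$ supplies the strict inequality $\A(\xi-1)<\A(\xi)$. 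If $b\geq 1$, iterate the inductive hypothesis of (2) along $\al,\al+1,\dots,\al+(b-1)$: since $\A$ strictly increases by at least one on these successive natural-valued outputs, $\A(\al+(b-1))\geq\A(\al-1)+b=\mc\al+b\geq\mc\xi$, and the strict upper bound again comes from (1).

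The main obstacle is (3), since $\A_k(\xi)>2\mc\xi+2$ asks for essentially a doubling, which the iteration $\A^{(i)}$ does not manifestly provide. I would split on $\al$. If $\al=\om$ (so necessarily $b\geq 1$), the fundamental sequence is trivial, $\fs\om n=n$, and a short induction on $b$ gives the closed form $\A_k(\om+b)=(b+1)k+1$; since $\mc\xi=b$, the desired inequality follows from $k\geq 2$ via $(b+1)k+1\geq 2(b+1)+1=2b+3$. If $\al>\om$, set $n=\A^{(1)}(\xi)$; items (1) and (2) force $n\geq\mc\xi+1\geq 2$, and a case analysis on the $\om$-normal form of $\al$ shows that $\fs\al n>\om$ whenever $\al>\om$ and $n\geq 2$, so the inductive hypothesis of (3) applied at $\fs\al n<\xi$ yields
\[
\A_k(\xi)\;\geq\;\A^{(2)}(\xi)\;=\;\A(\fs\al n)\;>\;2\mc{\fs\al n}+2\;\geq\;2n+2\;\geq\;2\mc\xi+4.
\]

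The delicate point I expect to dwell on is the auxiliary fact that $\fs\al n>\om$ for all $\al>\om$ and $n\geq 2$. This must be checked through each clause of the recursive definition of the fundamental sequences, paying attention to edge cases such as $\al=\om\cdot 2$, $\al=\om^2$, and $\al=\om^\beta$ with $\beta$ a limit, where the strict inequality $\fs\al n>\om$ hinges precisely on the hypothesis $n\geq 2$ rather than merely $n\geq 1$.
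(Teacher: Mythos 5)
Your proposal is correct and follows essentially the same argument as the paper: a simultaneous induction on $\xi$, with item 1 obtained from the coefficient bound on $\fs\al{\A^{(i)}(\xi)}$ together with the inductive hypothesis of item 2, and item 3 split into the cases $\al=\om$ and $\al>\om$, the latter handled by applying the inductive hypothesis of item 3 at $\fs\al{\A^{(1)}(\xi)}<\xi$. Two small remarks: the ``delicate'' auxiliary fact you plan to verify by a case analysis --- that $\fs\al n>\om$ whenever $\al>\om$ and $n\geq 2$ --- is immediate from Lemma \ref{lemmFundProp}.\ref{itFundPropMajor} (take $\om<\al$ and $\theta=n>1=\mc\om$), which is how the paper obtains it; and at the boundary case $\xi=\om$ of item 2 you cannot formally cite item 1 (stated only for $\xi>\om$), so there one should instead compute directly that $\A(\om-1)=\mc\om=1<k+1=\A(\om)$, as the paper does.
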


\begin{proof}
Proceed to prove the three claims simultaneously by induction on $\xi = \al+b$, where $\al$ is zero or a limit.
\medskip

\noindent {\em Proof of Claim \ref{itBoundTwoTwo}.}  We have that $\A^{(0)} (\xi) = \A(\xi-1)$, and induction hypothesis applied to $\fs\al{\A^{(i)} (\xi) } < \xi$ yields
\[\A^{(i+1)} (\xi) = \A( \fs\al{\A^{(i)} (\xi) } ) >\mc{\fs\al{\A^{(i)} (\xi) }} \geq \A^{(i)} (\xi) .\]
Induction on $i,j$ then yields the claim.
\medskip

\noindent {\em Proof of Claim \ref{itBoundTwoOne}.} If $\xi<\om$, $\xi=\mc\xi =\A(\xi-1)$ and $\A(\xi) = \xi+1 > \xi$.
Otherwise, $\al$ is a limit.
That $\mc\xi\leq \A(\xi-1)$ follows by definition when $b=0$, otherwise we may apply the induction hypothesis to the third claim to see that $\mc\xi < 2\mc{\xi-1} +2 < \A(\xi-1)$.
That $\A(\xi-1)<\A(\xi)$ follows from direct computation when $\xi=\om$, as $\A(\xi-1) = 1 <k+1 =\A(\om)$.
If $\xi>\om$, the fist claim yields $\A(\xi-1) = \A ^{(0)}(\xi) \leq \A ^{( k )}(\xi) = \A(\xi)$.
\medskip

\noindent {\em Proof of Claim \ref{itBoundTwoThree}.}   Consider two cases.
\begincases
\item ($\al=\om$).
Proceed by induction on $b$. 
If $b=1$ then $\A^{(0)} (\xi) = \A(\om ) = k+1\geq 3  $, so inductively  $\A^{(i)} (\xi) \geq 3+i$, yielding $\A(\xi) = \A^{(k)}(\xi) = 3+k\geq 5 = 2 \mc \xi + 3$.
Otherwise, the induction hypothesis yields $\A^{(0)} (\xi) \geq 2(b-1) + 3 = 2b + 1$, so $\A^{(k)}(\xi) \geq 2 b + k + 1 \geq 2b+3 = 2\mc\xi+3 $.

\item ($\al>\om$).
We have by the second claim that $\A^{(1)}(\xi)  > \mc{\xi} \geq 1 =\mc\om$.
Since $\al>\om$ and $\A^{(1)}(\xi) > \mc\om$, we have that $\fs{\al}{\A^{{(1)} }(\xi) } >\om$, so we may apply the induction hypothesis to conclude that
\[\A(\xi) \geq \A^{(2)} (\xi) =\A(\fs{\al}{\A^{{(1)} }(\xi) }) > 2\mc { \fs{\al}{ \A^{{(1)} }(\xi) } } +2 > 2\mc\xi+2.  \qedhere\]
\end{enumerate}
\end{proof}

Recall below that we have defined $\A^{(i)}_\om(\xi) = \A _\om(\xi)$ for all $i$.
This will allow us to write the following uniformly for all $\lambda\leq \om$.

\begin{lemma}\label{lemmMajorize}
Let $2\leq k< \lambda \leq \om$ and $\al< \be<\ve(\la^+)$.
If $\mck{\la^+} \al < \A^{(k-1)}_\la(\be)$, then $\A_\la(\al)<\A_\la(\be)$.
\end{lemma}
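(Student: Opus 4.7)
The plan is to handle $\la=\om$ separately, then to proceed by transfinite induction on $\be$ for $\la<\om$. When $\la=\om$, the definition $\A^{(k-1)}_\om=\A_\om$ reduces the hypothesis to $\al<\be$ and $\mck\Om\al<\A_\om(\be)$, and Proposition~\ref{propAOrder} delivers $\A_\om(\al)<\A_\om(\be)$ at once. For $\la<\om$, the base cases are easy: if $\be<\om$ then $\A(\al)=\al+1\leq\be<\A(\be)$, and if $\be=\om$ then a direct computation gives $\A^{(k-1)}(\om)=k$, whence $\al=\mc\al<k\leq\la$ yields $\A(\al)=\al+1\leq k<\la+1=\A(\om)$.

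For the inductive step assume $\be>\om$, write $\be=\al'+b'$ with $\al'$ a limit, and set $\ga:=\fs{\al'}{\A^{(k-1)}(\be)}$, noting that by definition $\A(\ga)=\A^{(k)}(\be)$ and that Lemma~\ref{lemmBoundTwo}.\ref{itBoundTwoTwo} gives $\A^{(k)}(\be)<\A^{(\la)}(\be)=\A(\be)$, since $k<\la$ and $\be>\om$. I split on whether $\al<\al'$. If $\al\geq\al'$, then $b'\geq 1$ and $\al=\al'+c$ for some $c<b'$; Lemma~\ref{lemmBoundTwo}.\ref{itBoundTwoTwo} applied at each $\al'+d+1>\om$ delivers the monotonicity step $\A(\al'+d)=\A^{(0)}(\al'+d+1)<\A^{(\la)}(\al'+d+1)=\A(\al'+d+1)$, and iterating from $d=c$ up to $d=b'-1$ gives $\A(\al)<\A(\be)$ without using the coefficient hypothesis at all.

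If instead $\al<\al'$, Lemma~\ref{lemmFundProp}.\ref{itFundPropMajor} with $\theta=\A^{(k-1)}(\be)$ gives $\al<\ga$, so it suffices to deduce $\A(\al)<\A(\ga)$. The value $\ga=\om$ is impossible, since Lemma~\ref{lemmFundProp}.\ref{itBoundMC} would then force $\A^{(k-1)}(\be)\leq\mc\om=1$, contradicting $\A^{(k-1)}(\be)>\A^{(0)}(\be)\geq 1$ obtained from Lemma~\ref{lemmBoundTwo}.\ref{itBoundTwoTwo}. When $\ga<\om$, the inequality $\A(\al)=\al+1\leq\ga<\A(\ga)$ is immediate. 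When $\ga>\om$, I invoke the inductive hypothesis on the pair $(\al,\ga)$; its coefficient premise is secured by chaining Lemma~\ref{lemmFundProp}.\ref{itBoundMC} with Lemmas~\ref{lemmBoundTwo}.\ref{itBoundTwoOne} and~\ref{lemmBoundTwo}.\ref{itBoundTwoTwo}, yielding $\mc\al<\A^{(k-1)}(\be)\leq\mc\ga\leq\A(\ga-1)\leq\A^{(k-1)}(\ga)$. The main obstacle is precisely this final chain: the inductive hypothesis can only be applied after upgrading $\mc\al<\A^{(k-1)}(\be)$ to $\mc\al<\A^{(k-1)}(\ga)$, and this hinges on Lemma~\ref{lemmFundProp}.\ref{itBoundMC} transferring the bound into the coefficients of the fundamental-sequence value $\ga$, combined with the general domination of $\mc$ by $\A^{(k-1)}$ on ordinals above $\om$ supplied by Lemma~\ref{lemmBoundTwo}.
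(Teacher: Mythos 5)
Your proof is correct and takes essentially the same route as the paper's: induction on $\be$, the $\la=\om$ case dispatched by the order characterization of $\A_\om$, and for $\la<\om$ a split on whether $\al$ shares the limit part of $\be$ (easy successor monotonicity) or lies below it, in which case Lemma \ref{lemmFundProp} gives $\al<\predek\be k$ and Lemma \ref{lemmBoundTwo} supplies the coefficient bound needed to apply the induction hypothesis at $\predek\be k$. Your additional bookkeeping (explicit base cases, excluding $\ga=\om$, treating finite $\ga$ directly, and concluding via $\A(\ga)=\A^{(k)}(\be)<\A(\be)$ rather than an equality with $\A(\be)$) merely tightens the same argument.
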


\begin{proof}
By induction on $\be$.
First assume $\la<\om$.
Write $\be=\la+t$ for $\la$ a limit.
If $\al=\la+t'$ for some natural number $t'$, then $t'<t$ and the claim is established by an easy induction on $t$.
Otherwise, using Lemma \ref{lemmFundProp}.\ref{itFundPropMajor}, we see that $\al\leq \lfloor \be\rfloor_k$, and moreover Lemma \ref{lemmBoundTwo}.\ref{itBoundTwoTwo} yields
\[\A^{(k-1)}(\predek \be k) > \mc {\lfloor \be\rfloor_k} \geq \A^{(k-1)}(\be) >\mc\al.\]
Thus we may apply the induction hypothesis to see that $\A(\al) <\A(\lfloor \be\rfloor_k ) = \A(\be)$.

If $\la=\om$, this simply repeats the characterization we have given for establishing  $\A_\om(\al)<\A_\om(\be)$, using the equality $\A^{(k-1)}(\be) = \A(\be)$.
\end{proof}

The following is an essential application of the Bachmann property to the study of the $\A$ functions.

\begin{lemma}\label{lemmBetweenSlow}
Fix $k\geq 0$ and let $\A = \A_k$.
Let $\lambda<\ve_0$ be a limit.
Then, if $ \fs\lambda q   < \zeta < \lambda$, it follows that $\A(\zeta) > \A( \fs\lambda q)$.
\end{lemma}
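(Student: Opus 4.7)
I plan to argue by transfinite induction on $\zeta$, using the Bachmann property as the bridge in the limit case, and the monotonicity behaviour of $\A^{(i)}$ to close gaps within a single fundamental sequence.

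Suppose first that $\zeta = \zeta_0+1$ is a successor. Then $\fs\lambda q \leq \zeta_0 < \zeta < \lambda$. If $\fs\lambda q = \zeta_0 = \zeta-1$, Lemma \ref{lemmBoundTwo}.\ref{itBoundTwoOne} directly gives $\A(\fs\lambda q) < \A(\zeta)$. Otherwise $\fs\lambda q < \zeta_0$, so the induction hypothesis applied to $\zeta_0$ (with the same $\lambda,q$) combines with Lemma \ref{lemmBoundTwo}.\ref{itBoundTwoOne} to produce $\A(\fs\lambda q) < \A(\zeta_0) < \A(\zeta)$.

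Now suppose $\zeta$ is a limit. The Bachmann property furnishes $\fs\lambda q \leq \fs\zeta 1 < \zeta$, so the whole task reduces to the auxiliary inequality $\A(\fs\zeta 1) < \A(\zeta)$: granted this, if $\fs\lambda q = \fs\zeta 1$ we are done, and if $\fs\lambda q < \fs\zeta 1$ then the induction hypothesis applied with the original $\lambda,q$ and the smaller target $\fs\zeta 1$ chains to give $\A(\fs\lambda q) < \A(\fs\zeta 1) < \A(\zeta)$. To establish the auxiliary inequality I unfold $\A(\zeta) = \A^{(k)}(\zeta)$ and note that $\A^{(0)}(\zeta) = \mc\zeta$ for $\zeta$ a limit, so $\A^{(1)}(\zeta) = \A(\fs\zeta{\mc\zeta})$. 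If $\mc\zeta = 1$ this already equals $\A(\fs\zeta 1)$, and the strict monotonicity of $i \mapsto \A^{(i)}(\zeta)$ (Lemma \ref{lemmBoundTwo}.\ref{itBoundTwoTwo}, supplemented by the direct computation $\A^{(i)}(\om) = i+1$ to cover the boundary case) yields $\A(\zeta) \geq \A^{(2)}(\zeta) > \A^{(1)}(\zeta) = \A(\fs\zeta 1)$. If instead $\mc\zeta \geq 2$, then Lemma \ref{lemmFundProp}.3 gives $\fs\zeta 1 < \fs\zeta{\mc\zeta} < \zeta$, and the induction hypothesis applied with the new parameters $(\zeta,1,\fs\zeta{\mc\zeta})$ in place of $(\lambda,q,\zeta)$---legitimate because $\fs\zeta{\mc\zeta} < \zeta$---produces $\A(\fs\zeta 1) < \A(\fs\zeta{\mc\zeta}) = \A^{(1)}(\zeta) \leq \A(\zeta)$.

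The main subtlety I anticipate is the internal recursive use of the induction hypothesis inside the limit case, where after one invocation of Bachmann one has to re-apply the lemma at two neighbouring stages $\fs\zeta 1$ and $\fs\zeta{\mc\zeta}$ of the very same fundamental sequence; this application is legal only because $\fs\zeta{\mc\zeta}$ is strictly below $\zeta$, and once it is carried out the monotonicity of the iterates $\A^{(i)}$ closes the remaining gap between $\A^{(1)}(\zeta)$ and $\A(\zeta) = \A^{(k)}(\zeta)$.
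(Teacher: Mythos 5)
Your proof is correct, but it is organized differently from the paper's. The paper does not split on whether $\zeta$ is a successor or a limit; instead it writes $\zeta=\al+d$ and $\fs\lambda q=\be+c$ with $\al,\be$ limits or zero and distinguishes two cases: either $\al=\be$, in which case $c<d$ and the conclusion follows from Lemma \ref{lemmMajorize} applied to the finite tails, or else $\fs\lambda q<\al<\lambda$, in which case the Bachmann property gives $\fs\lambda q\leq\fs\al 1<\predek\zeta k$, and a single application of the induction hypothesis at $\predek\zeta k$ finishes via the recursion $\A(\zeta)=\A(\predek\zeta k)$ of \eqref{eqRecurs}. You instead avoid Lemma \ref{lemmMajorize} and the operator $\predek\cdot k$ altogether: your successor case rests only on $\A(\xi-1)<\A(\xi)$ from Lemma \ref{lemmBoundTwo}, and your limit case uses Bachmann to reduce everything to the self-strengthening inequality $\A(\fs\zeta 1)<\A(\zeta)$, which you then obtain from the first iterate $\A^{(1)}(\zeta)=\A(\fs\zeta{\mc\zeta})$, monotonicity of $i\mapsto\A^{(i)}(\zeta)$, and a nested application of the lemma itself at the parameters $(\zeta,1,\fs\zeta{\mc\zeta})$ --- legitimate, as you note, because the induction is on the middle argument with $\lambda,q$ universally quantified. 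The trade-off: the paper's argument is shorter because collapsing all the way to $\predek\zeta k$ needs only one use of the induction hypothesis, while yours is more self-contained (no appeal to Lemma \ref{lemmMajorize}) at the cost of two chained uses of the hypothesis and explicit boundary bookkeeping at $\zeta=\om$. Note that, like the paper's proof, your argument tacitly uses $k\geq 2$ (e.g.\ in $\A^{(2)}(\zeta)\leq\A^{(k)}(\zeta)$ and in $\A(\om)=k+1>\A(1)$), which is the standing assumption on $\A_k$ despite the ``$k\geq 0$'' in the statement.
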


\proof
Write $\zeta = \alpha +d$ and $\fs\lambda q =\be+c $ with $\al,\be$ limits or zero and proceed by induction on $\zeta$.
Note that either $\al=\be $ or else $\fs \lambda q < \alpha <\lambda$, from which it follows using the Bach\-mann property that $\fs\lambda q \leq \fs \alpha 1 < \predek\zeta k$.
With this in mind, consider two cases.

\begincases

\item ($\al=\be $).
Then,
\[\al+c = \fs\lambda q    < \zeta = \al +d, \]
so $ c < d $ and $\A(\al + c)  < \A(\al +d )  $ by Lemma \ref{lemmMajorize}.

\item ($ \predek \zeta k > \fs \lambda q  $).
In this case, $  \fs \lambda q < \predek \zeta k <\zeta < \lambda$, so we see by the induction hypothesis that
$\A( \fs\lambda q  )  < \A(\predek \zeta k ) = \A(\zeta)$.\qedhere
\end{enumerate}
\endproof

\begin{corollary}\label{corBetweenSlow}
Fix $k\geq 0$ and let $\A = \A_k$.
Let $\xi =  \lambda +b<\varepsilon_0$ with $\lambda$ a limit.
Then, if $\lfloor\xi\rfloor_k  < \zeta <  \lambda$, it follows that $\A(\zeta) > \A(\xi)$.
\end{corollary}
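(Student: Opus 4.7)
The plan is to reduce the corollary directly to Lemma \ref{lemmBetweenSlow} by unpacking the definition of $\lfloor \xi \rfloor_k$ and using the recursive identity for $\A$.

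First I would set $q = \A^{(k-1)}(\xi)$. By Definition \ref{defPredek}, we then have $\lfloor \xi \rfloor_k = \fs{\lambda}{q}$. Moreover, equation \eqref{eqRecurs} gives $\A(\xi) = \A(\lfloor \xi \rfloor_k) = \A(\fs{\lambda}{q})$. So the hypothesis $\lfloor \xi \rfloor_k < \zeta < \lambda$ becomes exactly $\fs{\lambda}{q} < \zeta < \lambda$.

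At this point, Lemma \ref{lemmBetweenSlow} applies verbatim (with this choice of $q$), yielding $\A(\zeta) > \A(\fs{\lambda}{q}) = \A(\xi)$, which is the desired conclusion.

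Since this is a direct substitution, there is no real obstacle; the only thing to check is that $q$ can legitimately be used as an input to a fundamental sequence, but this is immediate from the fact that the definition of $\lfloor \xi \rfloor_k$ already employs $\A^{(k-1)}(\xi)$ in the subscript, so nothing new needs to be justified.
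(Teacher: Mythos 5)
Your proposal is correct and follows essentially the same route as the paper's own proof: identify $\lfloor\xi\rfloor_k = \fs{\lambda}{q}$ with $q=\A^{(k-1)}(\xi)$, use $\A(\xi)=\A(\lfloor\xi\rfloor_k)$, and apply Lemma \ref{lemmBetweenSlow}. Nothing further is needed.
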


\begin{proof}
Note that $\lfloor\xi\rfloor_k = \fs\lambda q $ with $q = \A^{(k-1)} (\xi)  $.
Thus Lemma \ref{lemmBetweenSlow} yields $\A(\xi) = \A(\lfloor\xi\rfloor_k) < \A(\zeta)$.
\end{proof}
 
Thus Corollary \ref{corBetweenSlow} tells us that there may be cases where $\xi<\zeta$ yet $\A(\xi)>\A(\zeta)$.
However, what we {\em can} guarantee when $\xi<\zeta$ is that $\A(\zeta)$ will never lie between $\A(\xi)$ and $\A(\xi+1)$: 

\begin{lemma}\label{lemmOneMore}
Given $\xi<\zeta<\varepsilon_0$, $ \A(\zeta) \not\in \big  ( \A(\xi ), \A(\xi+1) \big )$.
\end{lemma}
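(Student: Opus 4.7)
The plan is to induct on $\zeta$ with $\xi$ fixed. The case $\xi<\om$ is vacuous, since then $\A(\xi)=\xi+1$ and $\A(\xi+1)=\xi+2$ are consecutive naturals, so the open interval $(\A(\xi),\A(\xi+1))$ contains no integer at all. Hence we may assume $\xi\geq\om$, which forces $\zeta>\om$. If $\zeta=\xi+1$ there is nothing to show, so assume $\zeta>\xi+1$, and write $\zeta=\gamma+c$ with $\gamma\geq\om$ a limit and $c\in\N$. Set $\mu:=\lfloor\zeta\rfloor_k=\fs{\gamma}{\A^{(k-1)}(\zeta)}$; by the recursion~(\ref{eqRecurs}) we have $\A(\zeta)=\A(\mu)$, and $\mu<\gamma\leq\zeta$.

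If $\mu\geq\xi$, I would appeal to the induction hypothesis: when $\mu>\xi$ it applies at $\mu<\zeta$ to yield $\A(\mu)\notin(\A(\xi),\A(\xi+1))$, and when $\mu=\xi$ we simply have $\A(\zeta)=\A(\xi)$. Otherwise $\mu<\xi$, and I split on whether $\xi<\gamma$ or $\xi\geq\gamma$. If $\mu<\xi<\gamma$, Lemma~\ref{lemmBetweenSlow} applied with $\lambda=\gamma$ and $q=\A^{(k-1)}(\zeta)$ gives $\A(\xi)>\A(\mu)=\A(\zeta)$, placing $\A(\zeta)$ strictly below $\A(\xi)$.

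The interesting case is $\xi\geq\gamma$: I write $\xi=\gamma+c'$ with $c'<c$, and since $\zeta>\xi+1$ we actually have $c'+1<c$, hence $c\geq 2$. The plan here is to verify the hypothesis of Lemma~\ref{lemmMajorize} applied to $\xi+1<\zeta$. I would bound $\mc{\xi+1}\leq\max(\mc\gamma,c'+1)\leq\max(\mc\gamma,c-1)=\mc{\zeta-1}$, note that Lemma~\ref{lemmBoundTwo} gives $\mc{\zeta-1}\leq\A(\zeta-2)<\A(\zeta-1)$ (valid because $c\geq 2$ makes $\zeta-1$ a successor), and observe that $\A(\zeta-1)=\A^{(0)}(\zeta)\leq\A^{(k-1)}(\zeta)$. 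Chaining these yields $\mc{\xi+1}<\A^{(k-1)}(\zeta)$, so Lemma~\ref{lemmMajorize} delivers $\A(\xi+1)<\A(\zeta)$, as required.

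I expect the main obstacle to be this last sub-case, where one must extract a strict inequality $\mc{\xi+1}<\A^{(k-1)}(\zeta)$ from the interplay of coefficient bounds with the defining recursion. The gap $c\geq c'+2$ coming from $\zeta>\xi+1$ is precisely what aligns $\mc{\xi+1}$ with $\mc{\zeta-1}$, and the strict step $\mc{\zeta-1}<\A(\zeta-1)$ from Lemma~\ref{lemmBoundTwo} supplies the needed slack. The rest is a clean three-way split between ``$\mu\geq\xi$'' (reduce via induction), ``$\mu<\xi<\gamma$'' (apply Lemma~\ref{lemmBetweenSlow}), and ``$\xi\geq\gamma$'' (apply Lemma~\ref{lemmMajorize}).
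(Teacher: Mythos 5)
Your proof is correct and takes essentially the same route as the paper's: induction on $\zeta$ with the same three-way split --- reduce via $\A(\zeta)=\A(\lfloor\zeta\rfloor_k)$ and the induction hypothesis when $\lfloor\zeta\rfloor_k>\xi$, invoke Lemma~\ref{lemmBetweenSlow} (the paper uses Corollary~\ref{corBetweenSlow}) when $\lfloor\zeta\rfloor_k\leq\xi$ lies below the limit part of $\zeta$, and show $\A(\xi+1)\leq\A(\zeta)$ when $\xi$ is at or above that limit part. The only difference is that you verify the hypothesis of Lemma~\ref{lemmMajorize} explicitly in this last case, a step the paper asserts without detail.
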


\begin{proof}
Write $\xi = \alpha +a$ and $\zeta= \beta+b$ with $\al,\be$ limits.
Proceed by induction on $ \zeta $ and consider three cases.
\begincases

\item ($ \xi  \geq \beta$).
Then, since $\xi<\zeta$ we have that $\alpha=\beta$ and $a<b$, so that $\A(\xi+1) = \A(\al+a+1)   \leq \A(\al+b) = \A(\zeta)$.

\item ($\lfloor\zeta\rfloor_k \leq \xi $).
By Corollary \ref{corBetweenSlow}, $\A(\zeta) \leq \A(\xi ) $.

\item ($\lfloor\zeta\rfloor_k > \xi$).
In this case the claim is immediate from the induction hypothesis.\qedhere
\end{enumerate}
\end{proof}

Now that we have established the basic properties of the $\A $ functions, it is time to define normal forms for natural numbers based on them.

\section{Normal forms based on the fast-grow\-ing hierarchy}\label{secNF}

Maximal Goodstein processes for the $\A$ functions are obtained by first identifying base-change maximal normal forms.
We define them below, with maximality proven in subsequent sections.
Below, for an expression $\varphi( \xi ) $ we set $\xi_\ast = \nu \xi.\varphi$ if either $\xi_\ast$ is the maximum element of $\varepsilon_0$ satisfying $\varphi(\xi_\ast)$, or $\xi_\ast = 0$ and no such maximum exists.

\begin{definition}\label{defNF}
Fix $k\in [2,\om)$ and let $\A ( \xi ) = \A_k ( \xi)$.
Given $m \in \N$, we define $\A  ( \xi ) $ to be the {\em $k$-normal form} of $m$, in symbols $m \gknf \A ( \xi ) $, if $m = \A ( \xi ) $ and there exist sequences $\xi_0 , \xi_1, \ldots, \xi_n  $ such that:
\begin{enumerate}

\item $\xi_0 = 0$.

\item Given $i<n$, $\xi_{i+1} = \nu \zeta. \big ( \A ( \zeta ) \leq m \text{ and } \mc{\zeta} \geq \A ( \xi_i ) \big ).$

\item $\A(\xi_n) = m$.

\end{enumerate}
We call $(\xi_i)_{i\leq n}$ the {\em normal form sequence} of $m$.
\end{definition}

Let us introduce some notation for normal form sequences.
Suppose that $m$ has normal form sequence $ \big ( \A (\xi_i) \big )_{i\leq n}$.
Then, we define $\penum m = m_{n-1}$, and $\penum \xi = \xi_{n-1}$.
It is easy to see that $\penum m\gknf \A(\penum \xi)$, with the same normal form sequence as $m$ but truncated at the second to last element.

We often need results of the form {\em If $\A(\xi)$ is in normal form, then $\A(\zeta)$ is also in normal form,} where $\zeta$ is `similar' to $\xi$ in some way.
The following general principle will be useful for establishing this.

\begin{lemma}\label{lemmGeneralNormalForm}
If $\A(\xi)$ is in normal form and $\zeta $ is such that
\begin{enumerate}[label=(\alph*)]

\item $ \A(\zeta) <\A(\xi+1)$,

\item\label{condGenTwo} $\A(\xi)\leq \mc {\zeta} $, and

\item\label{condGenThre} for all $\theta$, if $\mc {\theta}\geq\A(\xi)$ and $\A(\theta) \leq \A(\zeta)$, then $\theta \leq \zeta$,

\end{enumerate}
then $\A(\zeta)$ is in normal form.
\end{lemma}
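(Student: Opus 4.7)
The plan is to build a normal form sequence for $m := \A(\zeta)$ by appending $\zeta$ to the normal form sequence of $\A(\xi)$. Let $(\xi_i)_{i \leq n}$ denote that sequence, so $\xi_n = \xi$. I claim $(\xi_0, \ldots, \xi_n, \zeta)$ is a normal form sequence for $m$. The base $\xi_0 = 0$ and the termination condition $\A(\zeta) = m$ are immediate.

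For preliminary observations, condition (b) together with Lemma~\ref{lemmBoundTwo}.\ref{itBoundTwoOne} gives $m > \mc{\zeta} \geq \A(\xi) \geq \A(\xi_i)$ for each $i \leq n$, so the new bound $\A(\theta) \leq m$ is strictly weaker than the old $\A(\theta) \leq \A(\xi)$. Moreover, $\A(\zeta) \in (\A(\xi), \A(\xi+1))$ by (a), so Lemma~\ref{lemmOneMore} forces $\zeta \leq \xi$. The new final step is straightforward: $\zeta$ satisfies $\A(\zeta) \leq m$ and $\mc{\zeta} \geq \A(\xi) = \A(\xi_n)$, while condition (c) is precisely the maximality of $\zeta$ among such candidates.

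The technical core is to show that for $i < n$, the old maximum $\xi_{i+1}$ remains the maximum under the relaxed bound. Since the new constraint is weaker, $\xi_{i+1}$ is still a candidate. For the reverse, suppose some $\theta > \xi_{i+1}$ has $\mc{\theta} \geq \A(\xi_i)$ and $\A(\theta) \leq m$. Old maximality forces $\A(\theta) > \A(\xi)$. If $\A(\theta) \geq \A(\xi+1)$, then (a) gives $\A(\theta) > m$, a contradiction. Otherwise $\A(\theta) \in (\A(\xi), \A(\xi+1))$, so Lemma~\ref{lemmOneMore} forces $\theta \leq \xi$; since $\A(\theta) \neq \A(\xi)$, in fact $\theta < \xi$.

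The main obstacle is this residual case: $\theta < \xi$ with $\A(\theta) > \A(\xi)$, $\mc{\theta} \geq \A(\xi_i)$, and $\theta > \xi_{i+1}$. I would exploit the contrapositive of Lemma~\ref{lemmMajorize} together with Corollary~\ref{corBetweenSlow} and the fundamental-sequence expansion $\xi = \lambda + b$: these force $\theta$ to lie in the narrow range $(\lfloor \xi \rfloor_k, \lambda)$ and to have $\mc{\theta}$ at least roughly $\A^{(k-1)}(\xi)$. The goal is to bootstrap this (using $\A(\theta) \leq m < \A(\xi+1)$ and iterated appeals to condition (c)) into the stronger bound $\mc{\theta} \geq \A(\xi)$, at which point (c) itself yields $\theta \leq \zeta$. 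A final comparison against the structure of the old normal form sequence then contradicts $\theta > \xi_{i+1}$. Carrying out this last chain of inequalities cleanly, so that the lower bound on $\mc{\theta}$ is promoted exactly to $\A(\xi)$, is the delicate heart of the argument.
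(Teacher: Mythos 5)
Your overall setup is the same as the paper's: append $\zeta$ to the normal form sequence $(\xi_i)_{i\leq n}$ of $\A(\xi)$ and check that each old maximum $\xi_{i+1}$ survives when the bound $\A(\cdot)\leq\A(\xi)$ is relaxed to $\A(\cdot)\leq\A(\zeta)$; your handling of the final step via conditions \ref{condGenTwo} and \ref{condGenThre} is correct. However, the proof is not complete: the case you isolate at the end (a competitor $\theta$ with $\theta\leq\xi$, $\A(\theta)>\A(\xi)$, $\mc{\theta}\geq\A(\xi_i)$ and $\theta>\xi_{i+1}$) is only a plan, not an argument — you say what you ``would exploit'' and call the remaining chain of inequalities ``the delicate heart of the argument.'' That is precisely the step the lemma needs, so as written there is a genuine gap, and the proposed bootstrapping of $\mc{\theta}$ from roughly $\A^{(k-1)}(\xi)$ up to $\A(\xi)$ via Lemma \ref{lemmMajorize} and Corollary \ref{corBetweenSlow} is not justified and there is no reason to expect it to go through.

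The missing observation that makes the case disappear is the monotonicity of the normal form sequence itself. For $0<i<n$ one has $\A(\xi_i)>\mc{\xi_i}\geq\A(\xi_{i-1})$ by Lemma \ref{lemmBoundTwo}.\ref{itBoundTwoOne}, so $\xi_{i+1}$ (which satisfies $\A(\xi_{i+1})\leq\A(\xi)$ and $\mc{\xi_{i+1}}\geq\A(\xi_i)>\A(\xi_{i-1})$) is a candidate in the maximization defining $\xi_i$; hence $\xi_{i+1}\leq\xi_i$, and equality is impossible since $\A(\xi_{i+1})>\mc{\xi_{i+1}}\geq\A(\xi_i)$. Thus $\xi_1>\xi_2>\cdots>\xi_n=\xi$, so $\xi_{i+1}\geq\xi$ for every $i<n$. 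Consequently any competitor $\theta>\xi_{i+1}$ already satisfies $\theta>\xi$, so your ``residual case'' $\theta\leq\xi$ is vacuous: old maximality gives $\A(\theta)>\A(\xi)$, and Lemma \ref{lemmOneMore} applied to $\xi<\theta$ gives $\A(\theta)\notin\big(\A(\xi),\A(\xi+1)\big)$, whence $\A(\theta)\geq\A(\xi+1)>\A(\zeta)$. This is exactly how the paper closes the case; adding this one observation turns your outline into a complete proof.
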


\begin{proof}
Let $(\xi_n)_{i\leq n}$ be the normal form sequence for $\A(\xi)$.
Note that $\A(\xi) \leq \mc {\zeta} < \A(\zeta) <\A(\xi+1)$ implies that $\xi \geq \om$ (otherwise $\A(\xi+1)= \A(\xi) + 1$), so $n>0$.
Note also that the maximality of $\xi_i$ yields $\xi_{i+1} \leq \xi_i$ for $0<i < n$, and $\xi_{i+1} = \xi_i$ is impossible since $\A(\xi_{i+1}) > \mc{\xi_{i+1}} \geq \A(\xi_i)$.
It follows that $\xi_{i+1}>\xi_n$ for all $i<n -1$.

Let $(\zeta_r)_{i\leq r}$ be the normal form sequence for $\A(\zeta)$.
We claim that $r=n+1$, $\zeta_i = \xi_i$ for $i\leq n$, and $\zeta_r = \zeta$, witnessing that $\A(\zeta)$ is in normal form.
If $i<n$, $\A(\xi_{i+1}) \leq \A(\xi) < \A(\zeta) $.
If $\delta>\xi_{i+1}$ and $\mc\delta\geq \A(\xi_i)$, the maximality of $\xi_{i+1}$ yields $\A(\delta) >\A(\xi )$.
Moreover, $\delta>\xi_{i+1}>\xi$, so Lemma \ref{lemmOneMore} yields $\A(\delta) \geq \A(\xi+1) > \A(\zeta)$.
Thus by induction on $i$ we see that $\zeta_i = \xi_i$, since $\xi_{i+1}$ is maximal with $\A(\xi_{i+1}) \leq \A(\zeta)$ and $\mc{\xi_{i+1}} \geq \A(\xi_i) \stackrel{\text{\sc ih}} = \A(\zeta_i)$.
Conditions \ref{condGenTwo} and \ref{condGenThre} then guarantee that $\zeta_{n+1} = \zeta$, as needed.
\end{proof}

Below we provide some applications of this general lemma.

\begin{lemma}\label{lemmNFBminus}
If $\xi$ is a successor and $m = \A(\xi)$ is in normal form, then either
\begin{enumerate}[label=(\alph*)]

\item $n = \A(\xi-1)$ is in normal form, or

\item $\xi = \al + \A(\penum \xi)$, where $\al$ is a limit or zero and $\mc\al <  \A(\penum \xi)$.
 
\end{enumerate}
\end{lemma}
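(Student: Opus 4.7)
My plan is to write $\xi$ in Cantor normal form as $\xi = \alpha + b$, where $\alpha$ is a limit or zero and $b$ is a positive natural number (this decomposition is available because $\xi$ is a successor), and then split into two cases according to whether $\mc{\xi - 1} \geq \A(\penum\xi)$ or not. Throughout, I will use the fact that along any normal form sequence the $\A$-values are strictly increasing: since $\mc{\xi_{i+1}} \geq \A(\xi_i)$, Lemma \ref{lemmBoundTwo} yields $\A(\xi_{i+1}) > \mc{\xi_{i+1}} \geq \A(\xi_i)$. In particular, because $\xi_n = \xi$, the maximality clause gives $\mc\xi \geq \A(\penum\xi)$.

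In the case $\mc{\xi - 1} \geq \A(\penum\xi)$, I will show that the normal form sequence of $\A(\xi - 1)$ is obtained from that of $\A(\xi)$ by swapping the final entry for $\xi - 1$, that is, $(\xi_0, \ldots, \xi_{n-1}, \xi - 1)$, yielding conclusion (a). The earlier entries remain valid candidates because they satisfy $\A(\xi_i) \leq \A(\penum\xi) \leq \mc{\xi - 1} < \A(\xi - 1)$ (using Lemma \ref{lemmBoundTwo}), and any $\zeta > \xi_{i+1}$ with $\A(\zeta) \leq \A(\xi - 1) < m$ and $\mc\zeta \geq \A(\xi_i)$ would have contradicted the original maximality of $\xi_{i+1}$. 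For $\xi - 1$ itself, membership in the set $\{\zeta : \A(\zeta) \leq \A(\xi - 1) \text{ and } \mc\zeta \geq \A(\penum\xi)\}$ is immediate from the case assumption; maximality reduces to observing that any $\zeta > \xi - 1$ (so $\zeta \geq \xi$) with $\mc\zeta \geq \A(\penum\xi)$ satisfies $\A(\zeta) \geq \A(\xi) > \A(\xi - 1)$ by the original maximality of $\xi$ in the normal form of $m$.

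In the remaining case $\mc{\xi - 1} < \A(\penum\xi) \leq \mc\xi$, I will read off the shape of $\xi$ demanded by (b) directly from the coefficients. Since $\mc\xi = \max\{\mc\alpha, b\}$ while $\mc{\xi - 1}$ equals $\max\{\mc\alpha, b - 1\}$ when $b > 1$ and $\mc\alpha$ when $b = 1$, the only way $\mc\xi$ can sit above the threshold $\A(\penum\xi)$ while $\mc{\xi - 1}$ drops below it is for the threshold to coincide with the final coefficient, forcing $\mc\alpha < \A(\penum\xi)$ and $b = \A(\penum\xi)$. This is precisely the desired form $\xi = \alpha + \A(\penum\xi)$ with $\mc\alpha < \A(\penum\xi)$.

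The one delicate point will be the maximality argument in the first case, where one has to transfer maximality of the $\xi_i$ from the original normal form to the shrunken candidate set with bound $\A(\xi - 1)$; all remaining steps are direct consequences of the definition of normal form and of Lemma \ref{lemmBoundTwo}.
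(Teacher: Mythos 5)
Your proof is correct and follows essentially the same route as the paper: the same case split on whether $\mc{\xi-1}\geq \A(\penum\xi)$, the same maximality-transfer argument showing that $\xi-1$ can replace $\xi$ as the last entry of the normal form sequence, and the same coefficient analysis forcing $\xi=\al+\A(\penum\xi)$ with $\mc\al<\A(\penum\xi)$ in the remaining case. The only cosmetic difference is that you verify the new normal form sequence directly (which is slightly easier here since the new value $\A(\xi-1)$ is below $m$), whereas the paper phrases the first case as an instance of the general criterion of Lemma \ref{lemmGeneralNormalForm}.
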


\begin{proof}
First, note that $\A(\xi-1) < \A(\xi) <\A(\penum \xi+1 )$.
We check that $\xi-1$ satisfies the maximality condition for normal form sequences.
If $\A(\theta) \leq \A(\xi-1)$ and $\mc\theta\geq \A(\penum \xi)$, then we also have that $ \A(\theta) \leq \A(\xi)$, which by maximality of $\xi$ implies that $\theta\leq \xi$. But $\A(\xi)>\A(\xi-1)$, so $\theta\neq\xi$, hence $\theta\leq\xi-1$.
Thus $\A(\xi-1)$ is in normal form unless $\mc{\xi-1} <\A(\penum \xi)$.
The only way to have that $\mc\xi\geq \A(\penum \xi)$ but $\mc{\xi-1}< \A(\penum \xi)$ is that $\xi=\al+\A(\penum \xi)$ with $ \mc{\al} < \A(\penum \xi)$, as needed.
\end{proof}

\begin{lemma}\label{lemmAiNorm}
Let $k\in [2,\om)$ and write $\A$ for $\A_k$.
Suppose that $\A(\xi)$ is in normal form and let $i < k-1$.
Then, $ \A( \fs\xi{\A^{(i)}(\xi)} ) $ is in normal form.
\end{lemma}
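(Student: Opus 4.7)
The plan is to directly exhibit the normal form sequence of $\A(\zeta)$, where $\zeta := \fs\xi{\A^{(i)}(\xi)}$, by modifying the normal form sequence of $m := \A(\xi)$. I focus on the principal case where $\xi$ is a limit ordinal; if instead $\xi$ is a successor, then $\fs\xi{\A^{(i)}(\xi)} = \xi - 1$ independently of $i$, and the statement essentially reduces to Lemma~\ref{lemmNFBminus}. Writing $(\xi_0,\ldots,\xi_n)$ for the normal form sequence of $m$ (so $\xi_n = \xi$), I claim the normal form sequence of $m^* := \A(\zeta) = \A^{(i+1)}(\xi)$ is exactly $(\xi_0,\ldots,\xi_{n-1},\zeta)$.

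First I would note that $m^* < m$ by Lemma~\ref{lemmBoundTwo}.\ref{itBoundTwoTwo}, using $i+1 < k$. Then I would verify that each $\xi_j$ with $j \leq n-1$ remains correct in the new sequence. The values $\A(\xi_j)$ form a strictly increasing chain, since $\A(\xi_j) \leq \mc{\xi_{j+1}} < \A(\xi_{j+1})$ by the normal form condition together with Lemma~\ref{lemmBoundTwo}.\ref{itBoundTwoOne}; in particular $\A(\xi_{n-1}) \leq \mc\xi = \A^{(0)}(\xi) \leq \A^{(i+1)}(\xi) = m^*$. Maximality of each $\xi_j$ transfers from the original sequence: any $\delta > \xi_j$ with $\mc\delta \geq \A(\xi_{j-1})$ already forces $\A(\delta) > m > m^*$ by the original maximality.

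Next I would check that $\zeta$ extends the sequence correctly. Lemma~\ref{lemmFundProp}.\ref{itBoundMC} yields $\mc\zeta \geq \A^{(i)}(\xi)$, and the chain $\A^{(i)}(\xi) \geq \A^{(0)}(\xi) = \mc\xi \geq \A(\xi_{n-1})$ completes the coefficient bound; the value condition $\A(\zeta) = m^*$ is immediate from the recursion defining $\A^{(i+1)}$. The crucial step is the maximality of $\zeta$. Suppose for contradiction that some $\eta > \zeta$ satisfies $\A(\eta) \leq m^*$ and $\mc\eta \geq \A(\xi_{n-1})$. Then $\eta$ would be a candidate for $\xi_n$ in the original sequence for $m$ (since $\A(\eta) \leq m^* < m$), so $\eta \leq \xi$. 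The case $\eta = \xi$ is excluded because $\A(\eta) = m > m^*$. The remaining case $\zeta < \eta < \xi$ is ruled out by Lemma~\ref{lemmBetweenSlow} applied with $\lambda = \xi$ and $q = \A^{(i)}(\xi)$: this gives $\fs\lambda q = \zeta$ and hence $\A(\eta) > \A(\zeta) = m^*$, a contradiction.

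The main obstacle is precisely the maximality of $\zeta$, which requires ruling out intermediate ordinals $\eta \in (\zeta,\xi)$ whose $\A$-value might fail to exceed $m^*$. Lemma~\ref{lemmBetweenSlow}, which packages the Bachmann property of the fundamental sequences, is the essential tool that closes this gap.
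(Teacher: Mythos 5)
Your treatment of the case where $\xi$ is a limit is sound and is in substance the paper's own argument: the paper invokes Lemma~\ref{lemmGeneralNormalForm} with anchor $\penum\xi$, which yields exactly the sequence $(\xi_0,\ldots,\xi_{n-1},\zeta)$ you exhibit, and the maximality of $\zeta$ is settled there, as in your write-up, by Lemma~\ref{lemmBetweenSlow} (plus the maximality of $\xi$ itself); your chain $\A(\xi_{n-1})\leq\mc\xi=\A^{(0)}(\xi)\leq\A^{(i+1)}(\xi)$ matches the paper's use of Lemma~\ref{lemmBoundTwo}.

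The genuine gap is the case $\xi=\al+b$ with $\al$ a limit and $b>0$, which you dismiss with ``the statement essentially reduces to Lemma~\ref{lemmNFBminus}.'' That lemma gives only a dichotomy: either $\A(\xi-1)$ is in normal form, or $\xi=\al+\A(\penum\xi)$ with $\mc\al<\A(\penum\xi)$, and in the second alternative nothing guarantees the desired conclusion --- indeed the paper later (proof of Lemma~\ref{lemmCZero}) explicitly handles the situation in which $\A(\xi)$ is in normal form but $\A(\xi-1)$ is not, so the literal successor-case claim you would be reducing to is not available. Moreover, the way Lemma~\ref{lemmAiNorm} is actually used (in Lemmas~\ref{lemmCZero} and~\ref{lemNewLeft}) it must certify that $\A\big(\fs\al{\A^{(i)}(\xi)}\big)$ --- the term occurring in the recursion $\A^{(i+1)}(\xi)=\A\big(\fs\al{\A^{(i)}(\xi)}\big)$, with $\al$ the \emph{limit part} of $\xi$ --- is the normal form of $\A^{(i+1)}(\xi)$, and this is where the real work lies when $b>0$. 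The paper's proof splits accordingly: when $b=0$ or $\mc{\xi-1}<\penum m$ it argues as in your limit case but must additionally rule out candidates of the form $\theta=\al+p$ with $p<b$; when $b>0$ and $\mc{\xi-1}\geq\penum m$ it first uses Lemma~\ref{lemmNFBminus} to get $\A(\xi-1)$ in normal form merely as an auxiliary anchor, and then runs the Lemma~\ref{lemmGeneralNormalForm}/Lemma~\ref{lemmBetweenSlow} argument again to conclude that $\A(\fs\al q)$, $q=\A^{(i)}(\xi)$, is in normal form. Neither subcase appears in your proposal, so as it stands your proof establishes the lemma only for limit $\xi$.
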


\begin{proof}
Write $\xi=\al+b$ with $\al$ a limit and let $m=\A(\xi)$, $q=\A^{(i)}(\xi)$, and $m' = \A( \fs\xi{\A^{(i)}(\xi)} ) $.
Consider two cases.

\begincases
\item ($b=0$ or $\mc{\xi-1} < \penum m$).
In order to apply Lemma \ref{lemmGeneralNormalForm}, we note that $  \A( \fs\alpha {q }  ) < \A(\xi ) < \A(\penum \xi + 1) $, where the first inequality uses Lemma \ref{lemmBoundTwo}.\ref{itBoundTwoTwo}.
Lemma \ref{lemmBoundTwo}.\ref{itBoundTwoTwo} also yields $q \geq \mc \xi \geq \penum m$.
Thus it suffices to show that if $\theta$ is such that $\mc\theta\geq \penum m$ and $\A(\theta)\leq m'$, then $\theta\leq \fs\al q $.
If $\theta \geq  \xi$ then $\A(\theta) > m >m'$ by maximality of $\xi$.
If $\theta = \alpha + p$ then $p < b$.
Consider two sub-cases. If $b=0$, no such $p$ exists.
If $\mc{\xi-1} < \penum m$ holds, this implies that $\mc{\theta} < \penum m$, contrary to our choice of $\theta$.
Finally, we note that if $\theta \in ( \fs\al q  , \al)$, Lemma \ref{lemmBetweenSlow} yields $\A(\theta)>m'$.
We thus conclude that $\A(\fs\alpha {q })  $ is in normal form.

\item ($b>0$ and $\mc{\xi-1} \geq \penum m$).
Lemma \ref{lemmNFBminus} shows that $\A(\xi -1)$ is in normal form.
We use this to check that $ \fs\alpha {q } $ is in normal form, once again using Lemma \ref{lemmGeneralNormalForm}. Using Lemma \ref{lemmBoundTwo}, we can see that $\A(\xi-1) \leq q \leq \mc{\fs\al q}$ and $\A(\fs\al q)<\A(\xi)$.
Thus it remains to check that if $\chi$ is such that $\mc\chi \geq \A(\xi-1)$ and $\A(\chi)\leq m'$, then $\chi\leq \fs\al q$.
Since $\A(\xi-1)$ is in normal form, we must have that $\chi \leq \xi-1$.
If $\chi =\al+p$ then $p < b $, so $\mc {\al+ p } < \A(  \al+ p )\leq \A(  \xi-1)$.
If $\chi\in (\fs\al q ,  \al)$, Lemma \ref{lemmBetweenSlow} yields $\A(\chi) > m'$.
Hence indeed $\A(\fs\al q )$ is in normal form.\qedhere
\end{enumerate}
\end{proof}

\section{Base change}\label{secBC}

The last ingredient we need in order to define our maximal Goodstein process is the base change operator.
As in the classical Goodstein process, $\bch m{\bases k\la}$ replaces every instance of $k$ by $\la$ in the normal form of $m$.

\begin{definition}
Given $2\leq k<\la\leq\om$ and $m\in \N$, we define the base change operation $\bch m {\bases k \la} = \chg m$ inductively as follows.
Let $\kappa=\la^+$, and set:
\begin{enumerate}

\item $\bch 0 {\bases k \la} = 0$.

\item For $m \gknf \A_k(\xi)  $ we set
$\chg m   = \A_\la (\chg\xi)$.

\item For $\xi = \omega^{ \al}b+\ga  $ in $\omega$-normal form, we set
$\chg\xi = \kappa^{ \chg \al}\chg b + \chg\ga$.

\end{enumerate}
\end{definition}

Base-change maximal normal forms lead to monotone base-change operators, in the following sense.

\begin{proposition}\label{propMaxToMon}
Let $2\leq k<\la\leq \om$ and write $\uparrow$ for $\bch{}{\bases k\la}$.
Suppose that $m$ has the property that for all $n<m$, if $n=\A_k(\zeta)$, then $\chg n \geq \A_\la(\chg \zeta) $.
Then, whenever $2\leq k<\la$ and $i<j<m $, it follows that $\bch i{\bases k\la} < \bch j{\bases k\la}$.
\end{proposition}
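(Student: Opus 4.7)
The plan is to proceed by strong induction on $j$, showing $\chg i < \chg j$ for all $i < j < m$. The base case $j = 1$ is immediate: $\chg 0 = 0 < 1 = \A_\la(0) = \chg 1$. For the inductive step, by transitivity with the inductive hypothesis applied at $j - 1$, it suffices to establish $\chg{(j-1)} < \chg j$ for each $1 \leq j < m$. Writing $j \gknf \A_k(\xi)$ and $j - 1 \gknf \A_k(\xi')$, the goal becomes $\A_\la(\chg \xi') < \A_\la(\chg \xi)$.

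The argument then splits on the shape of $\xi$. If $\xi < \om$, then $\A_k(\xi) = \xi + 1$ forces $\xi = j - 1$, so $\chg j = \A_\la(\chg \xi) = \A_\la(\chg(j-1))$; by Lemma \ref{lemmBoundTwo}.\ref{itBoundTwoOne} applied in base $\la$, this strictly exceeds $\chg(j-1)$, and we are done. Otherwise $\xi \geq \om$, and we invoke Lemma \ref{lemmMajorize} in base $\la$: the goal reduces to verifying the ordinal comparison $\chg \xi' < \chg \xi$ together with the coefficient bound $\mck{\la^+}{\chg \xi'} < \A^{(k-1)}_\la(\chg \xi)$. Monotonicity of the base change on ordinal expressions, which follows from the inductive hypothesis applied to the natural-number coefficients appearing inside $\xi$ and $\xi'$, reduces the first condition to the ordinal inequality $\xi' < \xi$; the second reflects the maximality clause of Definition \ref{defNF}, which forces the coefficients of $\xi$ to dominate $\A_k(\penum \xi)$ and thereby, after base change, to dominate the maximal coefficient of $\chg \xi'$.

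The main obstacle is the structural comparison $\xi' < \xi$ in the infinite case, since $\A_k$ is not monotone---Lemma \ref{lemmOneMore} exhibits explicit gaps---and the normal forms of consecutive integers need not be ordinals of comparable shape. The argument proceeds by sub-case analysis driven by Lemma \ref{lemmNFBminus}: either $\A_k(\xi - 1)$ is itself in normal form, in which case the normal form $\xi'$ of $j - 1$ sits below $\xi - 1$ and the comparison is direct; or $\xi = \al + \A_k(\penum \xi)$ with $\mc \al < \A_k(\penum \xi)$, in which case $\xi'$ must be located by descending one level in the normal-form sequence of $j$, using Lemma \ref{lemmAiNorm} (preservation of normal form under the operation $\zeta \mapsto \fs \zeta{\A_k^{(i)}(\zeta)}$) to supply the bridge. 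In each sub-case, the maximality clauses of Definition \ref{defNF} combined with the fundamental-sequence estimates of Section \ref{secAFun} deliver both the ordinal inequality and the coefficient bound required by Lemma \ref{lemmMajorize}.
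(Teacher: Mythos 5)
The main issue is that your argument never uses the hypothesis of the proposition, and that hypothesis is precisely what makes the statement trivial. The paper's proof is two lines: reduce to $j=i+1$, observe that $j=\A_k(i)$ when $i$ is read as a finite ordinal (a perfectly good representation of $j$, in general not its $k$-normal form), and apply the assumed base-change maximality with $n=j<m$ and $\zeta=i$ to get $\chg j\geq \A_\la(\chg i)=\chg i+1>\chg i$, using that the ordinal base change of a finite ordinal coincides with the numerical one and that $\A_\la$ acts as successor on ordinals below $\la^+$. By instead trying to compare the normal forms of $j-1$ and $j$ head-on and feed them into Lemma \ref{lemmMajorize}, you re-open the hard technical core of the paper rather than exploiting the hypothesis you were handed.

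Concretely, the gaps in your route are these. First, Lemma \ref{lemmMajorize} needs not only $\chg{\xi'}<\chg\xi$ but the coefficient bound $\mck{\la^+}{\chg{\xi'}}<\A^{(k-1)}_\la(\chg\xi)$; since $\mc{\xi'}$ can be as large as roughly $j-2$, after base change this amounts to an estimate of the shape $\chg{(j-1)}<\A^{(k-1)}_\la(\chg\xi)$, which is essentially Lemma \ref{lemNewLeft}/Corollary \ref{corMminus} --- results proved only later and with real work; the maximality clause of Definition \ref{defNF} by itself does not deliver it. Second, your sub-case analysis invokes Lemma \ref{lemmNFBminus}, which concerns $\A_k(\xi-1)$, the value at the predecessor \emph{ordinal}, not $j-1=\A_k(\xi)-1$, the predecessor of the \emph{value} (e.g.\ $5\nfp 2\A_2(\om+1)$ while $\A_2(\om)=3\neq 4$), so the dichotomy you describe does not locate $\xi'$. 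Third, normal-form ordinals of consecutive integers are genuinely non-monotone, e.g.\ $3\nfp 2\A_2(\om)$ while $4\nfp 2\A_2(3)$; your finite/infinite split happens to dodge this example, and in fact $\xi'<\xi$ does hold when $\xi\geq\om$, but establishing it requires an argument about how the normal-form sequences of $j-1$ and $j$ share a prefix, which you do not give. As written, the proof does not close, and it misses that the proposition is designed to be an immediate consequence of its maximality hypothesis.
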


\proof
Working inductively, we may assume that $j=i+1$.
Then, we have that $i= i + 1 =   {\A_k (i)}$, and by base-change maximality,
\[ \chg j  \geq \chg{\A_k(i) } =  \A_\ell (\chg i  ) = \chg{ i  } +1>\chg i. \qedhere \]
\endproof

In fact, this monotonicity property is crucial for proving that Goodstein processes terminate; Proposition \ref{propMaxToMon} tells us that we have monotonicity for free, {\em if} we prove base-change maximality.

\begin{remark}\label{remMaxToMon}
Our goal is to prove that base-change maximality indeed holds.
If we are to prove this by induction on $m$, then Proposition \ref{propMaxToMon} tells us that we may assume that the base-change operator is monotone below $m$.
Thus this will be used as a hypothesis in many of the following results.
Once Theorem \ref{theoMax} and the subsequent Corollary \ref{corMon} (which states that the base change {\em is} monotone) have been established, we may drop this assumption wherever it was used.
\end{remark}

The next lemma is our first application of the proof strategy of Remark \ref{remMaxToMon}.

\begin{lemma}\label{lemmBchMonOrd}
Assume monotonicity of base change below $m$ and let $2\leq k<\la\leq \om$.
Suppose that $\xi,\zeta<\ve_0$ are such that $\mc\xi,\mc\zeta <m$.
Then,
\begin{enumerate}

\item If $\xi<\zeta$, then $\bch \xi{\bases k\la} < \bch \zeta{\bases k\la}  $.

\item If $\mc\xi<\mc\zeta < m$, then $\chg{\mc\xi} <\chg{\mc\zeta}$.

\end{enumerate}
\end{lemma}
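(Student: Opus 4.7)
The plan is to prove Part 2 first, since it is essentially immediate, and then attack Part 1 by transfinite induction on $\zeta$, with a case analysis on the $\omega$-normal forms.

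For Part 2, I would simply observe that $\mc\xi$ and $\mc\zeta$ are natural numbers satisfying $\mc\xi<\mc\zeta<m$, so the monotonicity hypothesis on base change below $m$ applies directly to give $\chg{\mc\xi}<\chg{\mc\zeta}$.

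For Part 1, I would write $\xi = \om^{\al_1}b_1+\xi'$ and $\zeta = \om^{\al_2}b_2+\zeta'$ in $\om$-normal form (so $b_i<\om$ and $\xi'<\om^{\al_1}$, $\zeta'<\om^{\al_2}$), and proceed by induction on $\zeta$. Before starting the cases, I would note two bookkeeping points: first, the coefficients $b_1,b_2$ satisfy $b_1\le\mc\xi<m$ and $b_2\le\mc\zeta<m$, so the monotonicity assumption handles them; second, the exponents $\al_i$ and remainders $\xi',\zeta'$ inherit the bound $\mc{\,\cdot\,}<m$ from $\mc\xi,\mc\zeta$, so the inductive hypothesis is applicable to them. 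I would then split into the three standard cases:
\begin{enumerate}
\item If $\al_1<\al_2$, the IH gives $\chg{\al_1}<\chg{\al_2}$ and $\chg{\xi'}<\chg{\om^{\al_1}}=\kappa^{\chg{\al_1}}$, whence
$\chg\xi = \kappa^{\chg{\al_1}}\chg{b_1}+\chg{\xi'} < \kappa^{\chg{\al_1}}(\chg{b_1}+1)\le \kappa^{\chg{\al_1}+1}\le\kappa^{\chg{\al_2}}\le\chg\zeta$.
\item If $\al_1=\al_2$ and $b_1<b_2$, monotonicity below $m$ gives $\chg{b_1}+1\le\chg{b_2}$, combined with $\chg{\xi'}<\kappa^{\chg{\al_1}}$ from IH to finish.
\item If $\al_1=\al_2$, $b_1=b_2$, and $\xi'<\zeta'$, the IH on $\xi'<\zeta'$ closes the argument.
\end{enumerate}

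The main obstacle, and the only slightly nontrivial point, is justifying the step $\chg{b_1}+1\le\kappa$ in Case (1), which is what allows the bound $\kappa^{\chg{\al_1}}(\chg{b_1}+1)\le\kappa^{\chg{\al_1}+1}$. Here I would split on $\la$: if $\la<\om$ then $\kappa=\om$ and $\chg{b_1}\in\N$ by definition of base change on natural numbers, so $\chg{b_1}+1<\om$; if $\la=\om$ then $\kappa=\Om$ and $\chg{b_1}=\A_\om(\cdot)<\Om$ by definition of $\A_\om$, so $\chg{b_1}+1<\Om$. Both subcases give the desired bound and the induction goes through uniformly.
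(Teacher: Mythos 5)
Your proposal is correct and follows essentially the same route as the paper, which only sketches this argument: apply the monotonicity assumption directly to the coefficients (which are bounded by $m$) and handle the ordinal comparison by induction on $\omega$-normal forms. Your write-up simply fills in the details the paper leaves implicit, including the correct justification that $\chg{b_1}+1\leq\kappa$ (since $\A_\la$ takes values below $\la^+=\kappa$), so the inductive argument goes through as stated.
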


\begin{proof}[Proof sketch.]
Both claims are easily verified using the fact that the base change is applied to the coefficients of $\xi$ and $\zeta$, and we may apply monotonicity to them given that they are bounded by $m$.
\end{proof}

\begin{lemma}\label{lemmOrdBch}
Assume monotonicity of base change up to $ \mc{\al}$, where $\al<\ve_0$ is a limit.
Let $n \in\N$, $2\leq k<\la\leq \om$, and write $\uparrow$ for $\uparrow^\la_k$.
Then,
\begin{enumerate}

\item $\mc{\chg{ \fs \al n } } \leq \mc{\fs{\chg\al  }{\chg n }_{\la^+}}$, and

\item $\chg{ \fs\al n }  \leq \fs{\chg\al }{\chg n}_{\la^+}.$

\end{enumerate}
\end{lemma}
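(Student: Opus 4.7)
The plan is to proceed by induction on $\al$, case-splitting on its $\om$-normal form $\al = \om^\be b + \ga$.  Since $\al$ is a limit, either $\ga > 0$ (in which case $\ga$ is itself a limit) or $\ga = 0$ and $\be > 0$; in the latter case we sub-divide further according to whether $b > 1$ or $b = 1$, and whether $\be$ is a successor or a limit.  These sub-cases mirror clauses 2, 4, 5, and 6 of the fundamental-sequence definition.  In each sub-case, I would unfold $\fs\al n$ on the left via the appropriate clause and, using the fact that base change commutes with the $\om$-normal form decomposition by definition, rewrite $\chg\al = (\la^+)^{\chg\be}\chg b + \chg\ga$ in $\la^+$-normal form; I would then unfold $\fs{\chg\al}{\chg n}_{\la^+}$ via the corresponding clause in base $\la^+$.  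The comparison reduces to the induction hypothesis applied to whichever of $\be$ or $\ga$ the recursive clause reaches into, combined with the monotonicity assumption on base change up to $\mc\al$.

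The routine cases are those in which the fundamental-sequence recursion modifies only the rightmost summand or the coefficient of $\al$---clauses 2, 5 (with $b > 1$), and 6.  Here the leading $\om^\be b$ (or $\om^\be \cdot (b-1)$) part is preserved under both operations, so the value inequality reduces formally to the induction hypothesis applied to $\ga$ or to $\be$.  The maximal-coefficient inequality follows in parallel from the fact that the coefficient set of an $\om$-normal form expression is the union of the coefficient sets of its components, so an inductive inequality at a sub-term propagates through the $\max$ on both sides.

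The main obstacle lies in clause 4 (and the corresponding sub-case of clause 5), where $\al = \om^{\be+1}$ and one must compare $\chg{\om^\be \cdot n} = (\la^+)^{\chg\be} \cdot \chg n$ against $\fs{(\la^+)^{\chg{\be+1}}}{\chg n}_{\la^+}$.  The subtlety is that $\chg{\be+1}$ need not coincide with $\chg\be + 1$: in the $\la = \om$ regime it can be strictly larger or even a limit ordinal, since base change of a natural number passes through the value of $\A_\om$ on its normal form.  Monotonicity of base change up to $\mc\al$, via Lemma \ref{lemmBchMonOrd}, still yields $\chg{\be+1} \geq \chg\be + 1$, and the proof concludes by case analysis on the form of $\chg{\be+1}$: when it is a successor $\sigma_0 + 1$, clause 4 in base $\la^+$ gives $(\la^+)^{\sigma_0} \cdot \chg n \geq (\la^+)^{\chg\be} \cdot \chg n$ directly; when it is a limit, clause 6 applies and one invokes Lemma \ref{lemmFundProp}.\ref{itFundPropMajor} to show the inner exponent $\fs{\chg{\be+1}}{\chg n}_{\la^+}$ is at least $\chg\be + 1$, so that the outer exponentiation dominates $(\la^+)^{\chg\be} \cdot \chg n$.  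A parallel analysis also handles the case where $\chg b$ for finite $b$ is itself a limit, which can trigger clause 3 on the right-hand side even though it did not fire on the left.
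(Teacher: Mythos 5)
Your overall route is the same as the paper's: induction on $\al$ through the $\om$-normal-form cases, monotonicity of base change on the finite parts, and the induction hypothesis on the recursive sub-term; your routine cases are handled exactly as in the paper's sketch. You are also right that the delicate point is the case $\al=\om^{\be+1}$ (and its variant with a nontrivial coefficient), where for $\la=\om$ the base change of a finite coefficient or of a successor exponent can be a limit ordinal, so that a different fundamental-sequence clause fires on the right-hand side. But your resolution of that case has a gap: to conclude $\fs{\chg{(\be+1)}}{\chg n}_{\la^+} > \chg\be$ from Lemma \ref{lemmFundProp}.\ref{itFundPropMajor} you need $\mck{\la^+}{\chg\be} < \chg n$, and nothing in the stated hypotheses (monotonicity up to $\mc\al$, with $n\in\N$ arbitrary) provides this: the coefficients of $\chg\be$ are base changes of coefficients of $\al$ and can be infinite limit ordinals, while $\chg n$ can be as small as $1$.

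This is not a slip you can patch in place, because with the hypotheses exactly as stated the claimed inequality can fail in that configuration. Take $k=2$, $\la=\om$, $\al=\om^{\om+3}$, $n=1$; monotonicity up to $\mc\al=3$ holds, and (using $\chg 1=1$, $\chg 2=2$, and $\chg 3=\A_\om(\Om)$, a countable limit, since $3\nfp2\A_2(\om)$) one gets $\chg\al=\Om^{\Om+\chg 3}$, hence $\fs{\chg\al}{\chg 1}_\Om=\Om^{\Om+1}$, whereas $\chg{\fs\al 1}=\chg{(\om^{\om+2})}=\Om^{\Om+2}$, so both items of the lemma fail. The case can therefore only be closed by exploiting a relation between $\chg n$ and the base-changed coefficients of $\al$ --- e.g.\ $n\geq\mc\al$ together with monotonicity through $n$, which is exactly the situation in every application of the lemma (there $n=\A^{(i)}(\xi)\geq\mc\xi\geq\mc\al$ by Lemma \ref{lemmBoundTwo}, as in Lemmas \ref{lemmBchBigger} and \ref{lemNewLeft}); with that domination in hand, your appeal to Lemma \ref{lemmFundProp}.\ref{itFundPropMajor} does go through. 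For comparison, the paper's own proof sketch never confronts this situation: it writes $\chg c-1$, implicitly assuming the base-changed coefficient is a successor, which is automatic only when $\la<\om$. So your instinct about where the difficulty sits was correct, but as written the key step does not follow from the assumptions you have available.
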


\begin{proof}
By induction on $\al$, where the critical cases are $\al=\om^\be c $ with $c>1$ and $\al =\om^{\be+c}$ with $\be$ a limit and $c>0$.
In the first case, we have that
\[\chg{ \fs\al n } = \chg{\big (\om^\be (c-1) + \fs{\om^\be}n \big ) } = \om^{\chg \be} \chg {(c-1)} + \chg{\fs{\om^\be}n},\]
while
\[\fs{\chg\al }{\chg n}_{\la^+} = \fs{\om^{\chg \be}\chg{ c } }{\chg n}_{\la^+}  = \om^{\chg \be}   ( \chg{ c } - 1 ) + \fs{  \om^{\chg \be} }{\chg n}_{\la^+} .\]
Monotonicity below $\mc\al$ yields $\chg {(c-1)} \leq \chg c-1$, while the induction hypothesis yields $\chg{\fs{\om^\be}n} \leq \fs{  \om^{\chg \be} }{\chg n}_{\la^+}$.
From this it readily follows that $\chg{ \fs\al n } \leq \fs{\chg\al }{\chg n}_{\la^+}$, establishing the first item, and inspection on the coefficients involved establishes the second.
The case for $\al =\om^{\be+c}$ is similar.
\end{proof}

Note that as an immediate corollary we obtain that $\A_\la(\chg{\fs\al n}) \leq \A_\la(\fs{\chg \al}{\chg n})$.
The following claims are easily verified simultaneously by induction on $n$ and $\al$.

\begin{lemma}\label{lemmBchBigger}
Let $2\leq k<\la\leq \om$ and assume monotonicity of base change below $n$.

\begin{enumerate}

\item If $n \in \N$ then $n \leq \bch n {\bases k \la}$, and if $n =\A(\xi)$ with $\xi\geq \om$, then $2n +2 \leq \bch n {\bases k \la}$.

\item If $\alpha <\varepsilon_0$ and $\mc\al < n$ then $\alpha \leq \bch \alpha {\bases k \la} $, and if $\la<\om$ then $\bch \alpha {\bases k\la} \in {\rm Lim}$ if and only if $\alpha \in {\rm Lim}$.

\end{enumerate}
\end{lemma}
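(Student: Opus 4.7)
The plan is a simultaneous induction: item~1 by strong induction on $n$, item~2 by induction on $\al$ in $\om$-normal form. Each recursive call reduces to either a smaller natural number (via the normal-form witness $\xi$ of $n$, whose coefficients are bounded by $\A_k(\xi-1) < n$ thanks to Lemma \ref{lemmBoundTwo}.\ref{itBoundTwoOne}) or a strict subterm of $\al$.

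For item~2, write $\al = \om^\ga b + \de$ in $\om$-normal form. Since $\mc\ga, \mc\de \leq \mc\al < n$ and $b \leq \mc\al < n$, the induction hypothesis yields $\chg\ga \geq \ga$, $\chg b \geq b$, and $\chg\de \geq \de$. Because $\kappa = \la^+ \geq \om$, monotonicity of ordinal arithmetic gives
\[
\chg\al \;=\; \kappa^{\chg\ga}\chg b + \chg\de \;\geq\; \om^\ga b + \de \;=\; \al.
\]
When $\la < \om$ we have $\kappa = \om$, and a direct case analysis on whether $\de > 0$, $\ga > 0$, or the expression reduces to the natural number $b$ (applying the inductive form of item~2 to $\de$ and item~1 to $b$) shows that $\chg\al$ is zero/successor/limit precisely when $\al$ is, giving the biconditional.

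For item~1, $n = 0$ is trivial. Otherwise let $n \gknf \A_k(\xi)$. If $\xi < \om$, then $\A_k(\xi) = \xi + 1$, so $\xi = n-1$ is a natural number with $\mc\xi < n$; item~2 gives $\chg\xi \geq \xi$, and $\A_\la$ on a finite ordinal adds at least $1$, so $\bch n{\bases k\la} \geq \chg\xi + 1 \geq n$. Suppose instead $\xi \geq \om$. If $\la = \om$, then $\chg\xi \geq \Om$, and direct inspection of Definition \ref{defA} shows $\A_\om(\chg\xi) \geq \om > 2n+2$. If $\la < \om$, write $\xi = \al+b$ with $\al$ a limit; item~2 gives $\chg\al$ also a limit. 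The crux is that $\A_\la$ performs at least $k+1$ nested iterations. An auxiliary induction on $i \leq k$ establishes $\A_\la^{(i)}(\chg\xi) \geq \A_k^{(i)}(\xi)$, using Lemma \ref{lemmOrdBch} to push the base change inside fundamental sequences together with the hypothesized monotonicity below $n$. At $i = k$ this yields $\A_\la^{(k)}(\chg\xi) \geq n$, so since $\la \geq k+1$,
\[
\A_\la(\chg\xi) \;\geq\; \A_\la\big( \fs{\chg\al}{\A_\la^{(k)}(\chg\xi)} \big).
\]
By Lemma \ref{lemmFundProp}.\ref{itBoundMC} the inner fundamental-sequence element has maximal coefficient at least $n$, so Lemma \ref{lemmBoundTwo}.\ref{itBoundTwoThree} yields $\A_\la(\chg\xi) > 2n + 2$.

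The main obstacle is the auxiliary induction $\A_\la^{(i)}(\chg\xi) \geq \A_k^{(i)}(\xi)$: each step must compare $\A_\la$ at $\fs{\chg\al}{q'}$ with $\A_k$ at $\fs\al q$, where $q \leq q'$. The comparison is handled by combining Lemma \ref{lemmOrdBch} (to witness $\fs{\chg\al}{q'} \geq \chg{\fs\al q} \geq \fs\al q$), monotonicity below $n$, and the fact that $\A_\la$ is monotone along the fundamental sequence of a fixed limit. With careful coefficient bookkeeping these tools reduce each inductive step to the previous one, and the base case $i=0$ follows from item~2 applied to $\xi$ together with the direct observation that $\A_\la$ dominates $\A_k$ on ordinals of comparable form.
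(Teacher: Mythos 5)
Your overall skeleton matches the paper's: item 2 is reduced to item 1 applied to the coefficients, and for item 1 with $\xi\geq\om$ you compare the iterates $\A_k^{(i)}(\xi)$ and $\A_\la^{(i)}(\chg\xi)$, pushing the base change inside fundamental sequences via Lemma \ref{lemmOrdBch} and invoking monotonicity below $n$. The genuine gap is that the crucial step is exactly what you leave as an ``observation''. To pass from stage $i$ to stage $i+1$ you must bound $\A_k(\fs\al q)$ by $\A_\la$ of a base-changed ordinal, i.e.\ you need the cross-hierarchy domination $\A_k(\zeta)\leq\A_\la(\chg\zeta)$ at the ordinal $\zeta=\fs\al{\A_k^{(i)}(\xi)}$. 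Knowing $\fs{\chg\al}{q'}\geq\chg{\fs\al q}\geq\fs\al q$ and that $\A_\la$ behaves monotonically along the fundamental sequence of $\chg\al$ cannot give this: those facts compare values of $\A_\la$ with values of $\A_\la$, never $\A_\la$ with $\A_k$. Moreover $\fs\al{\A_k^{(i)}(\xi)}$ is neither a smaller natural number nor a subterm of $\al$, so your declared induction scheme (strong induction on $n$ plus structural induction on $\al$) licenses no inductive call at it. The paper resolves precisely this point by a secondary induction on the ordinal itself, proving the stronger statement that $2\A_k(\zeta)+2<\A_\la(\chg\zeta)$ for infinite $\zeta$; that statement is what gets applied at $\xi-1$ for the base case $i=0$ and at $\fs\al{\A_k^{(i)}(\xi)}$ in the step. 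Without it, both your base case (``$\A_\la$ dominates $\A_k$ on ordinals of comparable form'') and your step assume the inequality being proved.

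Two further points. First, the auxiliary claim $\A_\la^{(i)}(\chg\xi)\geq\A_k^{(i)}(\xi)$ is too weak to run the step: to compare $\fs{\chg\al}{\chg{\A_k^{(i)}(\xi)}}$ with $\fs{\chg\al}{\A_\la^{(i)}(\chg\xi)}$ you need $\chg{\A_k^{(i)}(\xi)}\leq\A_\la^{(i)}(\chg\xi)$, i.e.\ the inequality with the base change applied on the left (this is the form the paper's argument effectively needs, and which Lemma \ref{lemNewLeft} later isolates). Second, your final appeal to Lemma \ref{lemmBoundTwo}.\ref{itBoundTwoThree} requires the argument $\fs{\chg\al}{\A_\la^{(k)}(\chg\xi)}$ to exceed $\om$, which fails when $\al=\om$: then $\chg\al=\om$ and each extra iteration adds only $1$. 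In fact for $\xi=\om+b$ the stated bound $2n+2\leq\bch n{\bases k\la}$ itself fails, e.g.\ $3\nfp2\A_2(\om)$ has $\bch 3{\bases 23}=\A_3(\om)=4<8$; the paper's own proof is silent about this same boundary case, so this last defect is shared with the source rather than introduced by you, but in your write-up it is the only mechanism offered for the factor $2n+2$, so it needs an explicit restriction to $\al>\om$ (or a separate treatment of $\al=\om$).
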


\begin{proof}
Write $\chg{}$ instead of $\bch{}{\bases k\ell}$, $\A$ for $\A_k$ and $\B$ for $\A_\la$.
For the first claim, proceed by induction on $n$.
Clearly, $0 \leq \chg 0$.
Otherwise, write $n \gknf \A(\xi) $, so that $\chg n = \B (\chg \xi)$. 
If $\xi$ is finite, then $\xi=n-1$ and the induction hypothesis yields $\chg \xi\geq \xi$, so that $\chg n = \chg \xi+1 \geq n$.
Otherwise, proceed by a secondary induction on $\xi$ to show that if $\xi\geq \om$ then $2\A(\xi) + 2 < \B(\chg\xi)$.
By induction on $i$, we show that $ \A^{(i)}(\xi) \leq \B^{(i)}(\chg \xi) $.
For $i=0$, we have if $\xi$ is a limit that $\A^{(0)}(\xi) = \mc\xi \leq \chg {\mc\xi} = \B^{(0)}(\chg\xi)$, where we have used Lemma \ref{lemmBoundTwo} to see that $\mc\xi<n$, so that we may apply the induction hypothesis.
Otherwise, write $\xi=\al+b$, so that
\[\A^{(0)}(\xi) = \A(\al+b-1) \leq \B(\chg\al +\chg(b-1)) \leq \B(\chg\al +\chg b-1 ) = \B^{(0)} (\chg\xi ), \]
where we have used monotonicity of base change below $n$ to conclude that $\chg(b-1) \leq \chg b-1$.

Now assume that $ \A^{(i)}(\xi) \leq \B^{(i)}(\chg \xi) $.
We see that
\begin{align*}
\A^{(i+1)}(\xi) & = \A(\fs \al {\A^{(i)}(\xi)}) \stackrel{\text{\sc ih}(\xi)}\leq \B( \chg{\fs \al {\A^{(i)}(\xi)} })\\
&\leq  \B( {\fs {\chg \al} {\chg{\A^{(i)}(\xi)}} }) \stackrel{\text{\sc ih}(i)}\leq \B( {\fs {\chg \al} {\chg{\B^{(i)}(\xi)}} }) = \B^{(i+1)}(\chg\xi),
\end{align*}
where first we use the induction hypothesis on $\fs \al {\A^{(i)}(\xi)} < \xi$, then the secondary induction hypothesis on $i$, and we have used Lemma \ref{lemmOrdBch} to see that $\B( \chg{\fs \al {\A^{(i)}(\xi)} }) \leq  \B( {\fs {\chg \al} {\chg{\A^{(i)}(\xi)}} })$.
Lemma \ref{lemmBoundTwo} then yields
\[2\A(\xi)+2 = 2\A^{(k)}(\xi) +2 \leq 2\B^{(k)}(\chg\xi) +2 < \B^{(k+1)} (\chg \xi) = \B(\chg \xi).  \]
The second claim follows by an easy induction on $\al$ using the first claim.
\end{proof}

In general we have that $\chg{\fs\al b} \leq \fs{\chg\al}{\chg b}$, but on occasion it would be useful for this to be an equality.
In such cases, we may instead use a variant of base change such that $\chg {\fs\al b} = \fs{\upsquiggly \al}{\chg  b}$, defined next.

\begin{definition}
Let $2\leq k<\la\leq \om$, and write $\uparrow $ for $\uparrow^\la_k$ and set $\kappa=\la^+$.
For $\xi<\ve_0$ in $\om$-normal form, define $\upsquiggly\xi = \upsquiggly ^\la_k\xi$ recursively by
\begin{enumerate}

\item $\upsquiggly 0 = 0$,

\item $\upsquiggly (\om^\al b+\ga) =\kappa^{\chg \al}\chg b +\upsquiggly \ga$ if $ \ga>0$,

\item $\upsquiggly \om^\al (b+1) = \kappa^{\chg \al}\chg b + \upsquiggly \om^\al$ if $b>0$,

\item $\upsquiggly  \om^{\al+1} = \kappa^{\chg \al+1} $,

\item $\upsquiggly \om^\al = \kappa^{\upsquiggly \al}$ if $\al$ is not a successor.

\end{enumerate}
\end{definition}

Note the `critical clause' $\upsquiggly  \om^{\al+1} = \kappa^{\chg \al+1} $, which is what differentiates $\upsquiggly$ from $\chg{}$.
This is what will make $\upsquiggly$ commute with fundamental sequences.
The operator $\upsquiggly$ is closely related to the operation $\lceil\cdot\rceil$ of Definition \ref{defCeil}, which we recall provides a left inverse to the fundamental sequences; in fact, as may be seen from the first item of the following lemma, we may have equivalently defined $\upsquiggly \xi = \lceil \chg{ \fs \xi 2}\rceil$.

Below, recall that $\tau_\kappa(\xi)$ is the terminal part of $\xi$, as given by Definition \ref{defTau}.

\begin{lemma}\label{lemmSquigProp}
Let $\xi<\ve_0$ be a limit ordinal, $ t \in (1,\om) $ and $2\leq k<\la\leq \om$.
Write $\upsquiggly$ for $\upsquiggly^\la _k$ and $\chg{}$ for $\chg{}{\bases k\la }$.
Then,
\begin{enumerate}

\item $\upsquiggly \xi = \lceil \chg{ \fs \xi t}\rceil$.

\item \label{itSquigComm} $\chg{\fs\xi t} = \fs{\upsquiggly\xi}{\chg t} $.

\item If $t>1$ and $\zeta $ is a limit with $\chg{\fs\xi t} = \fs{\zeta}{\chg t} $, then $\zeta=\upsquiggly \xi$.

\item\label{itSquigVs} $\A_\la (\upsquiggly\xi+t ) \leq \A_\la (\chg\xi+t ) $.

\item $\xi$ is a limit iff $\upsquiggly\xi$ is a limit.

\item\label{itSquigFive} $\tau_{\la^+}(\upsquiggly \xi) ={\la^+}$.

\end{enumerate}
\end{lemma}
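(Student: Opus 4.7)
I plan to prove all six items simultaneously by induction on $\xi$, with a case split matching the recursive definition of $\upsquiggly\xi$: (A) $\xi=\om^{\al}b+\ga$ with $\ga>0$, (B) $\xi=\om^\al(b+1)$ with $b>0$, (C) $\xi=\om^{\al+1}$, and (D) $\xi=\om^\al$ with $\al$ a limit. Since $\xi$ is itself a limit, the sub-ordinals fed back into the induction (namely $\ga$ in case (A) and $\al$ in case (D)) are again limits, so the IH delivers what we need.

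\textbf{Items 1, 2, 5 and 6.} In each case I would unfold $\fs\xi t$ via the definition of fundamental sequences, apply $\chg{}$ coefficient-wise, and match the result against the recursive clauses of $\upsquiggly$ and $\lceil\cdot\rceil_\kappa$. The critical clause is (C): $\fs{\om^{\al+1}}{t}=\om^\al t$, so $\chg{\fs\xi t}=\kappa^{\chg\al}\chg t$, and since $\chg t>1$, Definition \ref{defCeil} gives $\lceil\kappa^{\chg\al}\chg t\rceil_\kappa=\kappa^{\chg\al+1}=\upsquiggly\xi$; dually $\fs{\kappa^{\chg\al+1}}{\chg t}_\kappa=\kappa^{\chg\al}\chg t$, establishing item 2 here. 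Case (D) uses IH item 2 on $\al$ together with IH item 5 on $\al$ (needed because the clause $\fs{\kappa^\al}{\cdot}=\kappa^{\fs\al{\cdot}}$ requires $\al$ to be a limit), while cases (A) and (B) recurse on $\ga$ and $\om^\al$ respectively. Items 5 and 6 are then immediate by inspection of each clause; for item 6 the point is that every recursive chain bottoms out at case (C), whose output $\kappa^{\chg\al+1}$ has $\tau_\kappa=\kappa$ by clause 5 of Definition \ref{defTau}.

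\textbf{Item 3.} From $\chg{\fs\xi t}=\fs\zeta{\chg t}$, item 2 yields $\fs{\upsquiggly\xi}{\chg t}=\fs\zeta{\chg t}$. Item 6 gives $\tau_\kappa(\upsquiggly\xi)=\kappa$; since $\zeta$ is a limit whose fundamental sequence at $\chg t\in(1,\kappa)$ coincides with that of $\upsquiggly\xi$ (which genuinely depends on the index $\chg t$), we get $\tau_\kappa(\zeta)=\kappa$ as well; in particular, for $\kappa=\om$ this is automatic since $\om$ admits no proper limit below it. Corollary \ref{corFSInj} then concludes $\zeta=\upsquiggly\xi$.

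\textbf{Item 4 and the main obstacle.} I would first establish the auxiliary inequality $\upsquiggly\xi\leq\chg\xi$ by the same induction. Cases (A) and (D) inherit this directly from the IH on a sub-ordinal, case (B) follows from $\kappa^{\chg\al}\chg b+\upsquiggly\om^\al\leq\kappa^{\chg\al}(\chg b+1)\leq\kappa^{\chg\al}\chg{b+1}$ (where the first step uses $\upsquiggly\om^\al\leq\kappa^{\chg\al}$, itself from IH applied according to whether $\al$ is limit or successor), and the delicate case (C) reduces to $\chg\al+1\leq\chg{\al+1}$, a consequence of base-change monotonicity (Lemma \ref{lemmBchMonOrd}). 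Since every coefficient appearing in $\upsquiggly\xi$ is either $1$ or a base-changed coefficient of $\xi$, we also have $\mck\kappa{\upsquiggly\xi+t}\leq\mck\kappa{\chg\xi+t}$; combined with $\A_\la^{(k-1)}(\chg\xi+t)>\mck\kappa{\chg\xi+t}$ from Lemma \ref{lemmBoundTwo}, an invocation of Lemma \ref{lemmMajorize} concludes $\A_\la(\upsquiggly\xi+t)\leq\A_\la(\chg\xi+t)$. The principal obstacle is the coefficient bookkeeping and a uniform treatment of $\la<\om$ and $\la=\om$, in the latter of which $\A_\om^{(k-1)}=\A_\om$ by definition and the estimate $\A_\om(\chg\xi+t)>\mck\Om{\chg\xi+t}$ must be drawn from Proposition \ref{propAOrder} rather than from Lemma \ref{lemmBoundTwo}.
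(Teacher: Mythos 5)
Your proposal follows essentially the same route as the paper's (sketched) proof: the same induction on $\xi$ organized around the clauses defining $\upsquiggly$, the same critical case $\xi=\om^{\al+1}$ for items 1--2, Corollary \ref{corFSInj} for item 3, the two auxiliary inequalities $\upsquiggly\xi+t\leq\chg\xi+t$ and $\mck\kappa{\upsquiggly\xi+t}\leq\mck\kappa{\chg\xi+t}$ followed by a majorization step for item 4, and case inspection for items 5--6; your substitution of Proposition \ref{propAOrder} for Lemma \ref{lemmBoundTwo} when $\la=\om$ is also the right move.

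The one place where you go beyond the paper is the one step I would not accept as written: in item 3 you deduce $\tau_\kappa(\zeta)=\kappa$ from the claim that the common value $\chg{\fs\xi t}=\fs\zeta{\chg t}$ ``genuinely depends on the index''. The hypothesis concerns a single index $\chg t$, and in any case index-dependence does not force terminal part $\kappa$ when $\kappa=\Om$: an ordinal such as $\zeta=\Om\cdot\om$ has $\tau_\Om(\zeta)=\om<\Om$ yet $\fs\zeta\theta_\Om=\Om\theta$ varies with $\theta$; concretely, with $\la=\om$, $\xi=\om^2$, $t=2$ one gets $\chg{\fs\xi t}=\Om\cdot2=\fs{\Om\om}{2}_\Om$ while $\upsquiggly\xi=\Om^2\neq\Om\om$. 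So the hypothesis $\tau_\kappa(\zeta)=\kappa$ required by Corollary \ref{corFSInj} cannot be derived by your argument and must in effect be added as an assumption on $\zeta$ when $\kappa=\Om$ (for $\la<\om$ your remark is exactly right: every limit below $\ve_0$ has terminal part $\om$, so there is nothing to check). To be fair, the paper's own proof of item 3 invokes Corollary \ref{corFSInj} without verifying this hypothesis either, so you are more attentive to the issue than the source; but the patch you propose does not close it in the uncountable case.
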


\begin{proof}[Proof sketch.]
Let $\kappa = \la^+$.
The first two items follow by induction on $\xi$.
The critical case is where $\xi=\om^{\al+1}$.
For the first claim, $\fs\xi t = \om^\al t$, so that $\chg{\fs\xi t} =  \kappa^{\chg\al}\chg t$, and
\[ \lceil \chg{\fs \xi t}\rceil_\kappa  = \lceil \kappa^{\chg\al}\chg t\rceil_\kappa = \kappa^{\chg\al+1} = \upsquiggly \xi .\]
For the second, we have that
\begin{align*}
\chg{\fs \xi t} & = \chg{\om^\al t} = \kappa^{\chg \al}\chg t = \fs{\kappa^{\chg \al + 1}}{\chg t}_\kappa =
\fs{\upsquiggly \om^{\al+1}}{\chg t}.
\end{align*}
Other cases follow by applying the induction hypothesis to the relevant sub-terms.

The third claim follows from Corollary \ref{corFSInj}, since from  $\fs{\zeta}{\chg t}  = \chg{\fs\xi t}  = \fs{\upsquiggly \xi}{\chg t}$ and injectivity of the fundamental sequences we obtain $\zeta= \upsquiggly \xi$.
The fourth follows from the fact that $\upsquiggly\xi+t  \leq  \chg\xi+t $ and $\mc{\upsquiggly\xi+t}  \leq  \mc{\chg\xi+t}$, which are verified by a routine induction, and the fifth by induction and case-by-case inspection.

The sixth item also proceeds by induction.
We treat the cases where $\xi = \om ^ {\al+1}$ and $\xi = \om ^ {\al} (b+1) $ with $b>0$.
We have that $\upsquiggly \om ^ {\al+1} = \kappa^{{\chg\al}+1}$,
and $\tau_\kappa(\kappa^{\chg\al+1}) = \kappa$ by definition.
Similarly, $\upsquiggly \om ^ {\al} (b+1) = \kappa^{\chg\al}\chg b + \upsquiggly \om^\al $, and by the induction hypothesis, $\tau_\kappa(\upsquiggly \om^\al) = \kappa$, so that $\tau_\kappa(\xi)=\kappa$ as well.
\end{proof}

The operation $\upsquiggly$ allows us to solve the equation $\chg {\fs\alpha c} = \fs{\theta}{\chg c}_{\la^+}$ when $\alpha$ and $c$ are given, but $\theta$ is unknown.
Now, suppose that we are given $\theta$ as well as the value of $\gamma:= {\fs\alpha c}$, but not $\alpha$ itself.
The following lemma provides conditions under which such and $\alpha$ can be found; in these cases, we have $\alpha = \lceil \gamma\rceil$.
Moreover, $\alpha$ additionally satisfies $\upsquiggly \alpha  = \theta$.
Let us make this precise.

\begin{lemma}\label{lemmCeil}
Let $2\leq k<\la\leq \om$ and write $\uparrow$ for $\uparrow^\la_k$ and $\upsquiggly$ for $\upsquiggly^\la_k$.
If $c>1$, $\ga<\ve_0$, and $\theta < \ve(\lambda^+)$ are such that $\tau_\kappa(\theta) =\kappa$ and $\fs \theta {\chg c}_{\la^+} = \chg \gamma$, then $\fs{\lceil \gamma\rceil} c=\gamma$ and $\upsquiggly \lceil \gamma\rceil = \theta$.
\end{lemma}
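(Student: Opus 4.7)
My plan is to prove $\fs{\lceil\gamma\rceil}c = \gamma$ first and then derive $\upsquiggly\lceil\gamma\rceil = \theta$ as a clean consequence. Granting the first identity, Lemma \ref{lemmSquigProp}.\ref{itSquigComm} gives
\[
\fs{\upsquiggly\lceil\gamma\rceil}{\chg c}_{\la^+} = \chg{\fs{\lceil\gamma\rceil}c} = \chg\gamma = \fs\theta{\chg c}_{\la^+},
\]
and both $\theta$ and $\upsquiggly\lceil\gamma\rceil$ have terminal part $\kappa$---the former by hypothesis, the latter by Lemma \ref{lemmSquigProp}.\ref{itSquigFive}---while $\chg c = c \in (1,\kappa)$ since base change is the identity on natural numbers. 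Corollary \ref{corFSInj} then delivers $\upsquiggly\lceil\gamma\rceil = \theta$.

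The main task is therefore to prove $\fs{\lceil\gamma\rceil}c = \gamma$, by induction on $\gamma$. The case $\gamma = 0$ does not arise, since $\fs\theta{\chg c}_\kappa = 0$ would force $\theta \in \{0,1\}$ and hence $\tau_\kappa(\theta) \neq \kappa$. For the inductive step I would write $\gamma$ in $\om$-normal form as $\om^{\alpha_1}b_1 + \delta_1$; since base change respects $\om$-normal form structurally, $\chg\gamma = \kappa^{\chg{\alpha_1}}\chg{b_1}+\chg{\delta_1}$ is itself in $\kappa$-normal form. Comparing this against the clause of the fundamental-sequence rule that could have produced $\fs\theta{\chg c}_\kappa$, and using $\tau_\kappa(\theta) = \kappa$ together with $c > 1$ to exclude incompatible shapes, one is forced into exactly one of three configurations: (i) $\delta_1 > 0$, whence $\theta$ has a non-zero remainder $\gamma_*$ with $\chg{\delta_1} = \fs{\gamma_*}{\chg c}_\kappa$ and $\tau_\kappa(\gamma_*) = \kappa$, so the induction hypothesis applies to $\delta_1$; (ii) $\delta_1 = 0$ and $b_1 > 1$, whence $\theta = \kappa^{\alpha+1}$ with $\chg{\alpha_1} = \alpha$ and $b_1 = c$; (iii) $\delta_1 = 0$ and $b_1 = 1$, whence $\theta = \kappa^{\alpha'}$ for a limit $\alpha'$ of terminal type $\kappa$ with $\chg{\alpha_1} = \fs{\alpha'}{\chg c}_\kappa$, and the induction hypothesis applies to $\alpha_1$.

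In each configuration, $\fs{\lceil\gamma\rceil}c = \gamma$ reduces to direct unfolding of Definition \ref{defCeil} and of the fundamental-sequence rules. The hypothesis $c > 1$ is used essentially twice: to give $\lceil\om^{\alpha_1}c\rceil = \om^{\alpha_1+1}$ in configuration (ii), which makes $\fs{\om^{\alpha_1+1}}c = \om^{\alpha_1}c$ recover $\gamma$; and to rule out parasitic matches---for instance $\theta = \kappa^{\alpha+1}$ paired against $\gamma = \om^{\alpha_1}$---that would require $\chg c = 1$. In configuration (iii) one additionally needs $\lceil\alpha_1\rceil$ to be a limit, in order to apply the rule $\fs{\om^\mu}c = \om^{\fs\mu c}$ at $\mu = \lceil\alpha_1\rceil$; this follows from the inductively obtained identity $\upsquiggly\lceil\alpha_1\rceil = \alpha'$ together with the ``limit iff limit'' item of Lemma \ref{lemmSquigProp}. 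For this reason, the cleanest presentation runs the induction on both claims simultaneously, so that (iii) can appeal to the $\upsquiggly$ half of the hypothesis on $\alpha_1$. I do not expect a major obstacle beyond careful bookkeeping; once the matching between the $\om$-normal form of $\gamma$ and the $\kappa$-normal form of $\theta$ has been identified, every configuration reduces mechanically.
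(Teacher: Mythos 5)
Your proposal is, in outline, the paper's own argument read in the opposite direction, so the approach is essentially the same rather than genuinely different. The paper first shows the two conclusions equivalent (strict monotonicity of $\chg{}$, Lemma \ref{lemmSquigProp}.\ref{itSquigComm}, Corollary \ref{corFSInj} together with Lemma \ref{lemmSquigProp}.\ref{itSquigFive}) and then proves $\upsquiggly \lceil \gamma\rceil = \theta$ by induction on $\theta$, with four cases on the $\kappa$-normal form of $\theta$; you induct on $\gamma$, carrying both claims simultaneously, and your configurations (i)--(iii) are the same cases regrouped. One small reading note: in configuration (i) the matching $\theta$ need not have a nonzero $\kappa$-normal-form remainder --- the paper's case $\theta=\kappa^\al(\eta+1)$ with $\eta>0$ also lands in your (i), and there your $\gamma_*$ must be taken to be the trailing summand $\kappa^\al$ produced by the fundamental-sequence clause, not a literal normal-form remainder. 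Your device of running the induction on both claims so that configuration (iii) can see that $\lceil \al_1\rceil$ is a limit is exactly what the paper does in its last case.

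There is one wrong justification that must be repaired, although the step it supports is true: base change is \emph{not} the identity on natural numbers --- that is the whole point of the Goodstein process. For instance $3\gknf \A_2(\om)$, so $\bch 3{\bases 23}=\A_3(\om)=4\neq 3$, and $\bch 3{\bases 2\om}=\A_\om(\Om)$ is even infinite; hence $\chg c=c$ is false in general. What you actually need is only $\chg c\in(1,\kappa)$, which holds because $\chg c\geq c>1$ by Corollary \ref{corMon} (or Lemma \ref{lemmBchBigger}) and $\chg c<\la^+=\kappa$ since $\A_\la$ takes values below $\la^+$; note the paper itself already invokes Corollary \ref{corMon} in this proof, so this is available. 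The same repair is needed in configuration (ii), where the matching yields $\chg{b_1}=\chg c$ and you should conclude $b_1=c$ from strict monotonicity (injectivity) of $\chg{}$, and in your exclusion of the parasitic match, which is ruled out because $\chg c\geq c>1$, not because $\chg c=c$. With these corrections your argument goes through.
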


\begin{proof}
Let $\kappa=\la^+$.
We have that $\fs{\lceil \gamma\rceil} c=\gamma$ if and only if $\chg{\fs{\lceil \gamma\rceil} c}=\chg\gamma$ (because $\chg{}$ is strictly monotone by Corollary \ref{corMon}), if and only if $\fs{\upsquiggly \lceil \gamma\rceil}{\chg c}_\kappa = \chg\gamma$ (because $\chg{\fs{\lceil \gamma\rceil} c }= \fs{\upsquiggly \lceil \gamma\rceil}{\chg c}_\kappa$ by Lemma \ref{lemmSquigProp}.\ref{itSquigComm}), if and only if $\fs{\upsquiggly \lceil \gamma\rceil}{\chg c}_\kappa = \fs{\theta}{\chg c}_\kappa $ (by assumption on $\gamma$), if and only if $\upsquiggly \lceil \gamma\rceil = \theta$ (by Corollary \ref{corFSInj} and the fact that $\tau_\kappa(\upsquiggly \lceil \gamma\rceil  ) =\kappa$ by Lemma \ref{lemmSquigProp}.\ref{itSquigFive}).
Thus we prove that $\upsquiggly \lceil \gamma\rceil = \theta$ by induction on $\theta$.
Write $\theta=\kappa^\al \delta+\be$ in $\kappa$-normal form and $\gamma= \om^{\check\al} d+ \check\be$ in $\omega$-normal form, so that $\chg\gamma = \kappa^{\chg{\check \al}} \chg d +\chg{\check \be}$.
\begincases

\item ($\be>0$). Then $\fs\theta{\chg c}_\kappa = \kappa^\al d+ \fs\be{\chg c}_\kappa $, so that the assumption that $\fs \theta{\chg c} _\kappa = \chg \gamma $ yields $\chg {\check \al}=\al$, $\chg {d}=\de$, and $\chg{\check \be}= \fs\be{\chg c} $.
Since $\chg{\check \be}= \fs\be{\chg c} $, we may apply the induction hypothesis to obtain $\be=\upsquiggly \lceil\check\be\rceil$, and hence
\[\upsquiggly \lceil \ga \rceil = \chg{\om^{\check \al} d}+\upsquiggly \lceil \check \be\rceil = \kappa^\al \de + \be= \theta. \]

\item ($\be=0$ and $\de =\eta+1>1$).
Then $\fs\theta{\chg c}_\kappa = \kappa^\al \eta + \fs{\kappa^\al } {\chg c}_\kappa $, and $\chg{\check \al}=\al $, $\chg{d} =\eta$, and $\chg {\check\be} = \fs{\om^\al } {\chg c} $.
The induction hypothesis yields $\om^\al  = \upsquiggly \lceil \check\be\rceil $, hence
\[\upsquiggly \lceil \ga \rceil = \chg{\om^{\check \al} d}+\upsquiggly \lceil \check \be\rceil = \om^\al \eta+ \om^\al= \theta. \] 

\item ($\be=0$, $\delta =1$, and $\al = \chi+1$).
Then $\fs\theta{\chg c} = \om^\de  {\chg c} $, and $\chg{\check \al}=\chi $, $\chg{d} =\chg c$ so that $d=c>1$, and $\chg {\check\be} = 0 $, so $\check \be=0$.
Then, $\lceil\gamma\rceil = \om^{\check \al+1}$, so that
\[
\upsquiggly \lceil \ga\rceil  = \kappa^{\chg {\check \al} + 1} = \kappa^{\chi+1} =  \theta.
\]

\item ($\be=0$, $\delta =1$, and $\al$ is a limit).
Then $\fs\theta{\chg c} = \om^  {\fs{ \al } {\chg c} }$, and $\chg{\check \al}=\fs{ \al } {\chg c} $, $ d = 1$, and $ {\check\be} = 0 $.
The induction hypohtesis yields $\al  = \upsquiggly \lceil \check\al\rceil  $, hence since $\al $ is a limit, so is $ \lceil \check\al\rceil$, and
\[
\upsquiggly \lceil \ga\rceil  = \upsquiggly \om^{\lceil \check \al\rceil } = \kappa^{\upsquiggly \lceil \check \al\rceil} = \kappa^{\al }=\theta.\qedhere
\] 
\end{enumerate}
\end{proof}

We have noted that in general $\chg{\fs\xi n} \neq \fs{\chg\xi}{\chg n}_{\la^+}$, but there is an important case where this equality does hold.
Below, $\tau=\tau_\Om$.

\begin{lemma}\label{lemmUpSame}
Write $\chg{}$ for $\bch{}{\bases k\om}$ and let $\xi<\ve_{\Om+1}$.
If $\tau(\chg\xi) =\Om$, then $\chg{\fs\xi n} = \fs{\chg\xi}{\chg n}_{\la^+}$.
\end{lemma}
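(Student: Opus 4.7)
The plan is to exploit the unconditional identity $\chg{\fs\xi n} = \fs{\upsquiggly\xi}{\chg n}_\Om$ from Lemma \ref{lemmSquigProp}.\ref{itSquigComm}, reducing the claim to showing $\fs{\upsquiggly\xi}{\chg n}_\Om = \fs{\chg\xi}{\chg n}_\Om$ under the hypothesis $\tau_\Om(\chg\xi) = \Om$. For this, I will in fact establish the stronger assertion $\upsquiggly\xi = \chg\xi$ whenever $\tau_\Om(\chg\xi) = \Om$, proceeding by induction on $\xi$ written in $\om$-normal form $\xi = \om^\al b + \ga$.

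The trailing case $\ga > 0$ is straightforward: one has $\chg\xi = \Om^{\chg\al}\chg b + \chg\ga$ and $\upsquiggly\xi = \Om^{\chg\al}\chg b + \upsquiggly\ga$, and since $\chg\ga > 0$, Definition \ref{defTau} gives $\tau_\Om(\chg\xi) = \tau_\Om(\chg\ga)$, so the hypothesis descends to $\ga < \xi$ and the induction hypothesis delivers $\upsquiggly\ga = \chg\ga$. The leading cases $\ga = 0$ (namely $\xi = \om^\al(b+1)$ with $b > 0$, $\xi = \om^{\al+1}$, and $\xi = \om^\al$ with $\al$ a limit) are more delicate. A case-by-case unfolding of Definition \ref{defTau} applied to $\chg\xi$ in $\Om$-normal form, compared against the hypothesis $\tau_\Om(\chg\xi) = \Om$, rules out the structurally bad configurations: a limit coefficient $\chg(b+1) < \Om$ would force $\tau(\chg\xi) = \chg(b+1) < \Om$, and a limit exponent $\chg(\al+1)$ or $\chg\al$ failing the $\tau = \Om$ condition would propagate failure upward. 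In the surviving configurations, combining the induction hypothesis on $\al$ (yielding $\upsquiggly\al = \chg\al$ when $\al$ is a limit) with the auxiliary identities $\upsquiggly\om^\al = \Om^{\chg\al}$ and, for the natural coefficient, $\chg(\eta+1) = \chg\eta + 1$, produces $\chg\xi = \Om^{\chg\al}\chg b + \Om^{\chg\al} = \upsquiggly\xi$.

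The main obstacle is establishing the auxiliary additive identity $\chg(\eta+1) = \chg\eta + 1$ for the natural-number coefficients and the finite exponents appearing in the leading cases. Because $\chg$ for $\la = \om$ factors through $\A_\om$ applied to the base-changed $k$-normal-form representation, it can send successor naturals to limit ordinals in general (e.g.\ $\chg 3 = \om$ when $k = 2$), thereby breaking the additive identity. However, precisely these configurations are excluded by $\tau_\Om(\chg\xi) = \Om$, which forces the image $\chg(\eta+1)$ to be a successor ordinal; a structural analysis of $\A_\om$ on the $k$-normal form sequence of $\eta + 1$ relative to that of $\eta$ then verifies the identity in exactly these admissible sub-cases, at which point the induction closes.
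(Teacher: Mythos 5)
Your proposal is essentially correct, but it organizes the argument differently from the paper. The paper proves the lemma by a direct induction on $\xi$: it unfolds $\fs\xi n$ in the critical case $\xi=\om^\zeta b$, observes that $\tau(\chg\xi)=\Om$ rules out $\chg b$ (and the base change of the finite part of the exponent) being a limit, concludes $b\gknf\A(b-1)$ and hence $\chg b=\chg{(b-1)}+1$, and then computes both sides of the identity by hand. You instead reuse Lemma \ref{lemmSquigProp}.\ref{itSquigComm} to get $\chg{\fs\xi n}=\fs{\upsquiggly\xi}{\chg n}_\Om$ unconditionally and reduce everything to the static statement that $\tau(\chg\xi)=\Om$ forces $\upsquiggly\xi=\chg\xi$, proved by induction on the $\om$-normal form. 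This is a genuinely different and arguably cleaner decomposition: it isolates exactly what the hypothesis buys (the two base-change variants coincide) and avoids redoing the fundamental-sequence computation; your case analysis of $\tau$ on $\chg\xi$ and the descent of the hypothesis to $\ga$ and to a limit exponent $\al$ are correct, and your example $\chg 3=\om$ for $k=2$ shows you have the right picture of why the additive identity can fail in general.

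Two caveats. First, the step you describe as ``a structural analysis of $\A_\om$ on the $k$-normal form sequence'' is precisely the paper's one-line key observation and should be made explicit: if $\chg{(\eta+1)}$ is a successor, then the normal form $\eta+1\gknf\A_k(\zeta)$ must have $\zeta<\om$ (an infinite $\zeta$ gives $\chg\zeta\geq\Om$, and $\A_\om$ applied to an argument $\geq\Om$ is a limit), so $\zeta=\eta$, and since $\A_\om$ acts as the successor on countable arguments, $\chg{(\eta+1)}=\A_\om(\chg\eta)=\chg\eta+1$; the same remark handles the finite tail of a successor exponent $\al$. As stated, this is asserted rather than proved, though it would go through. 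Second, Lemma \ref{lemmSquigProp}.\ref{itSquigComm} is stated only for limit $\xi$ and $t\in(1,\om)$, whereas the present lemma allows arbitrary $n$ (and the later application in Proposition \ref{propMinvsFS} can produce $n=1$); the non-limit case is vacuous since then $\tau(\chg\xi)<\Om$, but the small values of $n$ need either a one-line separate check or a slight strengthening of that lemma, a dependency the paper's direct computation does not incur.
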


\begin{proof}[Proof sketch.]
This follows by induction, where the critical case is when $\xi = \om^\zeta b $.
We cannot have that $\chg b \in {\rm Lim}$, since this would imply that $\tau(\chg\xi) =\chg b <\Om $.
Hence $\chg b= \be+1$ for some $\be$, which means that $b\gknf \A(b-1)$ (as $\A(\zeta)$ is a limit for any $\zeta\geq \om$), and thus $\chg b = \chg{(b-1)}+1$, so that  $\chg{(b-1)}  = \be$.
Now, write $\zeta=\ga +a$ with $\ga$ a limit, and as above we see that $a $ is either zero or else $\chg a = \chg{(a-1)} + 1$.
If $a=0$, we have that
\begin{align*}
\chg{\fs\xi n} & = \chg{ \big ( \om^\ga (b-1) +\om^{\fs \ga n}  \big )} = \Om^{\chg \ga} \chg (b-1) +\Om^{\chg{\fs \ga n}} \\
& \stackrel{\text{\sc ih}} =  \Om^{\chg \ga} \be +\Om^{{\fs {\chg \ga} {\chg n}}} = \fs{ \Om^{\chg \ga} (\be+1) }{\chg n}.
\end{align*}
If $\chg a = \al + 1$, we have that
\begin{align*}
\chg{\fs\xi n} & = \chg{ \big ( \om^ \zeta  (b-1) +\om^ \al  n  \big )} = \Om^{\chg \ga + \chg a} \be + \Om^{\chg{(a-1)}}\chg n \\
& = \om^{\chg \ga + \al+1} \be +\om^{\al}\chg n = \fs{ \Om^{\chg \ga + \al +1} (\be+1) }{\chg n}_\Om =  \fs{ \chg \xi }{\chg n}_\Om.&\qedhere
\end{align*}
\end{proof}

\section{Maximality of Base Change}\label{secMax}

Our strategy for proving that every fast Goodstein walk is finite proceeds by showing that the normal forms we have given provide the maximal value after base change, thus yield the longest possible Goodstein processes.
In this section we prove this maximality property.
We begin with some useful lemmas.

\begin{lemma}\label{lemmCZero}
Assume monotonicity of base change below $m$.
Suppose that $m\gknf \A(\xi)$ with $\xi=\al+b$, where $\alpha $ is a limit and $b>0$.
Then, 
\begin{enumerate}

\item $\chg {\A(\al-1+b)} < 1+ \B(\chg \al -1+ \chg b)$, and

\item if $\A(\al-1+b)$ is not in normal form, then $\chg {\A(\al-1+b)}  < 1+ \B(\upsquiggly \al -1+\chg b)$.

\end{enumerate}
\end{lemma}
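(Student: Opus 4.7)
The plan is to split on whether $\A(\al+b-1)$ is in normal form, as controlled by Lemma \ref{lemmNFBminus} applied to $\xi=\al+b$. In the first case ($\A(\al+b-1)$ in normal form) item 2 is vacuous, so I focus on item 1: the base change definition gives $\chg{\A(\al+b-1)}=\B(\chg\al+\chg(b-1))$; the assumed monotonicity of $\chg{}$ below $m$ yields $\chg(b-1)\leq\chg b-1$; and Lemma \ref{lemmMajorize}---with its coefficient hypothesis discharged via the growth estimate $\A_\la(\xi)>2\mc\xi+2$ from Lemma \ref{lemmBoundTwo}---gives $\B(\chg\al+\chg(b-1))\leq\B(\chg\al+\chg b-1)$, which yields item 1.

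In the other case, Lemma \ref{lemmNFBminus} forces $b=\A(\penum\xi)$ with $\mc\al<b$, and since $\mc\al\geq 1$ for $\al$ a limit we have $b\geq 2$. The key step is to identify the normal form sequence of $m':=\A(\al+b-1)$ as $\xi_0,\ldots,\xi_{n-1},\zeta'$, where $\zeta':=\fs\al q$ with $q:=\A^{(k-1)}(\al+b-1)$. I verify: (i) $\A(\zeta')=m'$ by the $\A$-recursion; (ii) $\zeta'$ is maximal among admissible candidates, since Corollary \ref{corBetweenSlow} rules out any $\zeta\in(\zeta',\al)$ (such $\zeta$ satisfy $\A(\zeta)>m'$), while any $\al+p$ with $0<p<b$ fails the coefficient condition $\mc\zeta\geq b$ because both $\mc\al$ and $p$ are strictly less than $b$; and (iii) the earlier sequence elements coincide with those of $m$, since the values $\A(\xi_i)$ remain bounded by $m'$ by Lemma \ref{lemmMajorize} applied with the coefficient bounds inherited from $m$'s normal form sequence.

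With $\zeta'$ identified, $\chg{m'}=\B(\chg{\zeta'})=\B(\fs{\upsquiggly\al}{\chg q})$ by Lemma \ref{lemmSquigProp}.\ref{itSquigComm}. The $\B$-recursion gives $\B(\upsquiggly\al+\chg b-1)=\B(\fs{\upsquiggly\al}{\B^{(\la-1)}(\upsquiggly\al+\chg b-1)})$, so item 2 reduces to the estimate $\chg q\leq\B^{(\la-1)}(\upsquiggly\al+\chg b-1)$. I prove this by a secondary induction on $i$ comparing $\chg{\A^{(i)}(\al+b-1)}$ with $\B^{(i)}(\chg\al+\chg b-1)$, paralleling the proof of Lemma \ref{lemmBchBigger} and using Lemma \ref{lemmOrdBch} to commute base change past fundamental sequences; the passage from $\chg\al$ to $\upsquiggly\al$ in the final step uses $\upsquiggly\al\leq\chg\al$ together with another application of Lemma \ref{lemmMajorize}. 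Item 1 in this case then follows from item 2 via the same inequality $\B(\upsquiggly\al+\chg b-1)\leq\B(\chg\al+\chg b-1)$. The main obstacle is the combinatorial identification of $\zeta'$ and the verification that the preceding sequence elements agree; the remaining reduction is essentially bookkeeping with the $\B$-recursion, but the commutation identity of Lemma \ref{lemmSquigProp}.\ref{itSquigComm}---which lets us replace $\chg\al$ by $\upsquiggly\al$---is exactly what makes item 2 strictly sharper than item 1.
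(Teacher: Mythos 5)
Your case split and your identification of the normal form of $m'=\A(\xi-1)$ in the second case as $\A(\fs\al q)$ with $q=\A^{(k-1)}(\xi-1)$ is in the spirit of the paper's argument (which obtains the normal-form-hood of these intermediate values via Lemma \ref{lemmAiNorm}), and it can be completed at this level: note that you still owe the coefficient condition $q\geq b$ (it follows from Lemma \ref{lemmOneMore} once one observes $m'=\A(\al+b-1)\geq \A(\al)+b-1>b$), and the maximality check must also exclude candidates $\theta\geq\xi$, which is where the maximality of $\xi$ in the normal form of $m$ enters. The real problem is the key estimate. You propose to prove $\chg{\A^{(i)}(\xi-1)}\leq \B^{(i)}(\chg\al-1+\chg b)$ and then ``pass from $\chg\al$ to $\upsquiggly\al$'' using $\upsquiggly\al\leq\chg\al$. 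That step goes the wrong way: since $\upsquiggly\al\leq\chg\al$, the quantity $\B^{(\la-1)}(\upsquiggly\al-1+\chg b)$ is the \emph{smaller} of the two bounds, so an upper bound stated relative to $\chg\al$ cannot be converted into the needed $\chg q\leq\B^{(\la-1)}(\upsquiggly\al-1+\chg b)$ by any monotonicity argument; if such a transfer were possible, item 2 would follow formally from item 1, whereas, as you yourself note, item 2 is strictly sharper. The induction must be run against the $\upsquiggly\al$-iterates directly, using the commutation $\chg{\fs\al e}=\fs{\upsquiggly\al}{\chg e}$ at \emph{every} stage (legitimate precisely because each intermediate value $\A(\fs\al e)$ is in normal form); this is what the paper does.

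A second, related gap: even when run against $\upsquiggly\al$, the base case $i=0$ of your secondary induction requires a bound on $\chg{\A^{(0)}(\xi-1)}=\chg{\A(\xi-2)}$ of the same shape, with $-2$ in place of $-1$, and nothing in your sketch supplies it ($\A(\xi-2)$ is again a number whose base change is computed through \emph{its} normal form, so the same problem recurs one level down). The paper resolves this by strengthening the claim to $\chg{\A(\al-t+b)}\leq\B(\upsquiggly\al-t+\chg b)$ for all $t\in[1,b+1]$ and inducting on $b-t$, with a separate sub-case for $\A(\xi-t)\leq b$ — which genuinely occurs for larger $t$ (e.g.\ $t=b$, where the value is $\A(\al)$, typically far below $b$) — handled by the size estimates of Lemma \ref{lemmBchBigger}. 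Finally, your reduction via the recursion $\B(\upsquiggly\al-1+\chg b)=\B\big(\fs{\upsquiggly\al}{\B^{(\la-1)}(\upsquiggly\al-1+\chg b)}\big)$ is only available for $\la<\om$; for $\la=\om$ one has $\B^{(i)}=\B$, $\chg b$ is infinite so $-1+\chg b=\chg b$, and the comparison must instead go through Proposition \ref{propAOrder}, a case your argument does not treat.
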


\begin{proof}
We consider two cases.

\begincases
\item ($\A(\xi-1) $ is in normal form).
Then, monotonicity below $m$ yields $\chg{(-1+b)} \leq -1+\chg b$, hence
\[\chg{\A(\xi-1)} = \B(\chg{(\xi-1)}) \leq \B(\chg\al-1+\chg b).\]
If $\chg b$ is infinite, then the inequality is strict, since $\chg(-1+b)<\chg b = -1+\chg b$.

\item ($\A(\xi-1) $ is not in normal form).
By Lemma \ref{lemmNFBminus}, we have that $\mc\al<b$ and $b \gknf \A(\zeta)$ for some $\zeta>\xi$, which in particular implies that $\zeta>\om$.
Note that in cases where $\A(\xi-1)$ is not in normal form, it suffices to show that $\chg {\A(\xi-1)}  \leq \B(\upsquiggly \al -1+\chg b)$, as $\B(\upsquiggly \al -1+\chg b)\leq \B(\chg \al -1+\chg b)$ by Lemma \ref{lemmSquigProp}.\ref{itSquigVs}.

For the proof to work, we need to show the more general claim that for all $t \in[1,b+1] $,
\[\chg {\A(\al-t+b)}  \leq \B(\upsquiggly \al -t+\chg b).\]
Consider two sub-cases and proceed by induction on $b-t$.

\beginsubcases

\item ($\A(\xi-t)\leq b$).
If $\chg b$ is infinite then $-t+\chg b = \chg b$, and
\[\chg{\A(\xi-t)} \leq \chg b < \B(\upsquiggly \al+\chg b)=\B(\upsquiggly \al-t+\chg b) .\]
If $\chg b$ is finite, $\chg b>2b+2$ by Lemma \ref{lemmBchBigger}, and we have that
\begin{align*}
\B(\upsquiggly \al-t +\chg b) & \geq \B(\upsquiggly\al-b-1+\chg b  ) \geq \B(\upsquiggly \al+ \lceil \nicefrac {\chg b}2\rceil ) \\
& > 2(\nicefrac {\chg b}2) = \chg b \geq \chg{\A(\xi-t)}
\end{align*}
(note that here $\lceil \nicefrac {\chg b}2\rceil$ is the standard integer ceiling function).

\item ($\A(\xi-t) > b$).
By the induction hypothesis,
\[\chg{\A(\xi-t-1)} < 1 + \B(\upsquiggly \al-t-1-\chg b).\]
We claim moreover that $\A(\xi-t-1) \geq b$.
Note that $\mc\al<b$, so that $t<b+1$ (otherwise $\A(\xi-t)=\mc\al$).
Since $\zeta>\xi$, by Lemma \ref{lemmBetweenSlow},
\[ b= \A(\zeta) \notin \big( \A ( \xi - t - 1 ),\A ( \xi - t  ) \big),\]
from which it follows that $b\leq \A ( \xi - t - 1 )$.
Note moreover that if $\la$ is infinite, $b\gknf\A(\zeta)$ with $\zeta>\xi$ implies that $\chg b$ is infinite.

We proceed by induction on $i$ to show that, for $i\leq k$, $ \A^{(i)}(\xi-t) < 1 + \B^{(i)}(\upsquiggly \al  -t +\chg b)$.
By the induction hypothesis for $b-t-1<b-t$, we have that
\begin{align*}
\chg{\A^{(0)}(\xi-t)} & = \chg{\A(\xi-t-1)}  <1+ \B(\upsquiggly \al-t-1-\chg b)\\
& \leq 1+ \B^{(0)}(  \upsquiggly \al-t -\chg b ) .
\end{align*}
Since $\A^{(i)}(\xi-t) \geq \A(\xi-t-1) \geq b$, Lemma \ref{lemmAiNorm} yields $\A^{(i+1)}(\xi-t)  \gknf \A(\fs{\al}{ \A^{(i)}(\xi-t) })$. Hence,
\[
\chg{\A^{(i+1)}(\xi-t)  }  = \B( \chg{\fs{\al}{ \A^{(i)}(\xi-t) }}) = \B(\fs{\upsquiggly \al}{\chg { \A^{(i)}(\xi-t) }}).\]
If $\la$ is finite, then
\[\B(\fs{\upsquiggly \al}{\chg { \A^{(i)}(\xi-t) }})\stackrel{\text{\sc ih}}\leq \B \big (\fs{\upsquiggly \al}{ \B^{(i)}(\upsquiggly \xi -t )} \big ) = \B^{(i+1)}(\upsquiggly \xi -t ) .\]
If $\la$ is infinite, we recall that we defined $\B^{(x)} = \B$ for all $x$.
Then, $ \chg { \A^{(i)}(\xi-t) } <  \B^{(i)}(\upsquiggly \al  -t +\chg b)$ yields
\[ \B(\fs{\upsquiggly \al}{\chg { \A^{(i)}(\xi-t) }}) <\B(\upsquiggly \xi ) = \B^{(i+1)}(\upsquiggly \al -t  +\chg b).\]

The claim follows by setting $i=k$, since
\[\chg{ \A^{(k)}(\xi-t) } <1+ \B^{(k)}(\upsquiggly \al -t+\chg b) \leq  1+  \B(\upsquiggly \al -t+\chg b),\]
as desired.
\qedhere
\end{enumerate}
\end{enumerate}
\end{proof}

\begin{lemma}\label{lemNewLeft}
Let $2\leq k<\la\leq \om$ and write $\uparrow$ for $\uparrow^\la_k$, $\upsquiggly$ for $\upsquiggly^\la_k$.
Suppose that $\A(\xi)$ is in normal form, where $\xi=\al+b$ with $\al$ a limit, and let $i < k$. Then,
\begin{enumerate}

\item $\chg{\A^{(i)}(\xi)} <1+ \B^{(i)}(\chg \xi)$, and

\item\label{itNewLeftTwo} $\B(\chg{\fs\al{\A^{(i)}(\xi)}} ) <1+ \B^{(i+1)}(\chg \xi)$.

\end{enumerate}
\end{lemma}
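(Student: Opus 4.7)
I would prove both parts simultaneously by induction on $i$, exploiting the following flow: within each step $i$, part~1 yields part~2 by applying $\B$ to both sides and invoking Lemma~\ref{lemmOrdBch}; and part~2 at step $i$ yields part~1 at step $i+1$ via Lemma~\ref{lemmAiNorm}, which guarantees that $\A^{(i+1)}(\xi)$ is in normal form (so $\chg{\A^{(i+1)}(\xi)} = \B(\chg{\fs\al{\A^{(i)}(\xi)}})$) whenever $i < k-1$. Throughout, I assume monotonicity of base change below $m = \A(\xi)$ per Remark~\ref{remMaxToMon}, which validates applications of Lemmas~\ref{lemmBchMonOrd} and~\ref{lemmOrdBch} to coefficients of $\xi$.

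For the base case $i = 0$, we have $\A^{(0)}(\xi) = \A(\xi - 1)$. If $b > 0$, Lemma~\ref{lemmCZero}.1 immediately gives part~1 after recognizing that $\B^{(0)}(\chg\xi) = \B(\chg\al - 1 + \chg b)$ via the identity $\chg\xi = \chg\al + \chg b$. If $b = 0$, then $\xi = \al$ is a limit, so $\A^{(0)}(\xi) = \mc\al$ and $\B^{(0)}(\chg\xi) = \mc{\chg\al}$; since $\chg{}$ distributes over the coefficients of $\al$ and is monotone on them (Lemma~\ref{lemmBchMonOrd}), we have $\chg{\mc\al} = \mc{\chg\al}$, closing the case. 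For the step from part~1 at $i$ to part~2 at $i$, I apply Lemma~\ref{lemmOrdBch}.2 to bound
\[\chg{\fs\al{\A^{(i)}(\xi)}} \leq \fs{\chg\al}{\chg{\A^{(i)}(\xi)}}_{\la^+},\]
then chain this with the inductive hypothesis $\chg{\A^{(i)}(\xi)} \leq \B^{(i)}(\chg\xi)$ and monotonicity of fundamental sequences (and of $\B$) to obtain $\B(\chg{\fs\al{\A^{(i)}(\xi)}}) \leq \B^{(i+1)}(\chg\xi)$, hence strictly less than $1 + \B^{(i+1)}(\chg\xi)$. For $\la = \om$, where $\B^{(i)} = \B$ for all $i$, the analogous bound is obtained via Proposition~\ref{propAOrder}: since $\fs\al{\A^{(i)}(\xi)} < \xi$ and the inductive hypothesis controls $\mc{\chg{\fs\al{\A^{(i)}(\xi)}}}$ below $\A_\om(\chg\xi)$, the order characterization of $\A_\om$ yields the desired inequality. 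Passing from part~2 at $i$ to part~1 at $i+1$ is then immediate via Lemma~\ref{lemmAiNorm}.

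The main obstacle I anticipate is handling the uncountable case $\la = \om$ uniformly with the finite case. When $\la = \om$, $\B^{(i)}$ collapses to $\B$ and the tidy manipulation of fundamental sequences via Lemma~\ref{lemmOrdBch} no longer plugs directly into the recursive definition of $\B^{(i+1)}$; one must instead route the estimate through Proposition~\ref{propAOrder}, verifying both the ordering hypothesis $\chg{\fs\al{\A^{(i)}(\xi)}} < \chg\xi$ and the maximal-coefficient hypothesis. In particular, the convenient identity $\chg{\fs\al c} = \fs{\chg\al}{\chg c}_\Om$ from Lemma~\ref{lemmUpSame} may be unavailable since its precondition $\tau(\chg\xi) = \Om$ need not hold here. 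Carefully tracking these hypotheses, and confirming that the standing monotonicity assumption of Remark~\ref{remMaxToMon} is sufficient to justify each invocation of Lemmas~\ref{lemmBchMonOrd}, \ref{lemmOrdBch}, and~\ref{lemmAiNorm}, is where the bookkeeping is most delicate.
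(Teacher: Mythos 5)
Your plan reproduces the paper's own proof: the same simultaneous induction on $i$, with the base case handled by Lemma~\ref{lemmCZero} (or trivially when $b=0$), the passage from claim~2 at $i$ to claim~1 at $i+1$ via Lemma~\ref{lemmAiNorm}, and the passage from claim~1 to claim~2 via Lemma~\ref{lemmOrdBch}, with the case $\la=\om$ routed through the order characterization of $\A_\om$ exactly as in the paper. One justification needs repair, though: in the finite-$\la$ case you invoke ``monotonicity of fundamental sequences (and of $\B$)'' to pass from $\fs{\chg\al}{\chg{\A^{(i)}(\xi)}}\leq\fs{\chg\al}{\B^{(i)}(\chg\xi)}$ to the corresponding inequality of $\B$-values, but $\B=\A_\la$ is \emph{not} monotone (Corollary~\ref{corBetweenSlow}); the correct tool, and the one the paper uses, is Lemma~\ref{lemmMajorize}, whose maximal-coefficient hypothesis is supplied by Lemma~\ref{lemmOrdBch}(1) together with Lemmas~\ref{lemmFundProp} and~\ref{lemmBoundTwo} — i.e.\ the finite case needs the same care you correctly reserve for $\la=\om$ via Proposition~\ref{propAOrder}. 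A second, harmless slip: in the base case with $b=0$ and $\la=\om$ one has $\B^{(0)}(\chg\xi)=\B(\chg\xi)$ rather than $\mck\Om{\chg\al}$, but the required inequality $\chg{\mc\al}<1+\B(\chg\al)$ then follows even more directly since $\chg{\mc\al}\leq\mck\Om{\chg\al}<\B(\chg\al)$.
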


\begin{proof}
We prove both claims simultaneously by induction on $i$.
When $i=0$, the first claim follows from Lemma \ref{lemmCZero}, or by definition when $\la<\om$ and $\xi$ is a limit (in which case this becomes $\chg{\mc\xi} < 1+\chg{\mc\xi}$).
For $i+1$, the induction hypothesis on the second claim and Lemma \ref{lemmAiNorm} yield
\[\chg{\A^{(i+1)}(\xi)} =\B(\chg{\fs\al{\A^{(i)}(\xi)}} ) <1+ \B^{(i+1)}(\chg \xi) .\]

For the second, Lemma \ref{lemmOrdBch} yields $\B(\chg{\fs\al{\A^{(i)}(\xi)}}) \leq \B({\fs{\chg\al}{\chg{\A^{(i)}(\xi)}}})$.
If $\la$ is finite, then using Lemma \ref{lemmMajorize} we see that
\[  \B({\fs{\chg\al}{\chg{\A^{(i)}(\xi)}}})\stackrel{\text{\sc ih}} \leq \B({\fs{\chg\al}{{\B^{(i)}(\chg\xi)}}}) = \B^{(i+1)}(\chg \xi).\]
If $\la$ is infinite, from the first claim we see that $\chg{\A^{(i)}(\xi)} <  1+\B^{(i)}(\chg \xi)= \B(\chg \xi) $, therefore
\[   \B({\fs{\chg\al}{\chg{\A^{(i)}(\xi)}}}) < \B(\chg \xi)  = 1+ \B^{(i+1)}(\chg \xi). \qedhere \]
\end{proof}

Recall that $\predek\cdot k $ was defined in Definition \ref{defPredek}.
We moreover use $\predek\cdot k^j$ to denote its $j$-fold iteration in the standard way, i.e.~$\predek\xi k^0 = \xi$ and $\predek\xi k^{j+1} = \predek{\predek\xi k^{j }} k $.

\begin{corollary}\label{corMminus}
Assume monotonicity of base change below $m\gknf \A(\xi)$, and let $j_*$ be the unique integer such that
\[0<\lfloor\xi\rfloor^{j_*}_k<\om.\]
Then for all $i\leq k$ and $j< j_*$,
\[\chg{(m-1)} < \B(\chg{\lfloor\xi\rfloor^j_k }) < 1+ \B^{(k)}(\chg\xi) \leq \chg m.\]
\end{corollary}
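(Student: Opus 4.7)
The plan is to establish the three inequalities separately. For the rightmost inequality $1+\B^{(k)}(\chg\xi)\leq \chg m$, note that $\chg m = \B(\chg \xi)$ by definition of base change applied to $m\gknf \A(\xi)$. When $\la<\om$, $\B(\chg\xi) = \B^{(\la)}(\chg\xi)$, and since $\chg\xi>\om$ (by Lemma~\ref{lemmBchBigger} together with the fact that the existence of $j_*$ forces $\xi\geq\om$), Lemma~\ref{lemmBoundTwo}.\ref{itBoundTwoTwo} yields $\B^{(k)}(\chg\xi)<\B^{(k+1)}(\chg\xi)\leq \B^{(\la)}(\chg\xi) = \chg m$, so $1+\B^{(k)}(\chg\xi)\leq \chg m$. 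The case $\la=\om$ reduces to the observation that $\B^{(k)}_\om=\B_\om$ and $\chg m$ absorbs the `$+1$', since it will be a limit ordinal in this setting.

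For the middle inequality $\B(\chg{\lfloor\xi\rfloor^j_k})<1+\B^{(k)}(\chg\xi)$, I would induct on $j\geq 1$. The base case $j=1$ is exactly Lemma~\ref{lemNewLeft}.\ref{itNewLeftTwo} instantiated at $i=k-1$, using $\lfloor\xi\rfloor_k=\fs\al{\A^{(k-1)}(\xi)}$ from Definition~\ref{defPredek}. For the induction step, the strategy is to show $\B(\chg{\lfloor\xi\rfloor^{j+1}_k})\leq \B(\chg{\lfloor\xi\rfloor^{j}_k})$ via three ingredients: first, $\lfloor\xi\rfloor^{j+1}_k\leq\lfloor\xi\rfloor^j_k$ since $\fs\al n\leq\al$; second, the base change preserves this ordering by Lemma~\ref{lemmBchMonOrd}; and third, $\B$ respects the resulting inequality by Lemma~\ref{lemmMajorize}, provided the maximal coefficient of $\chg{\lfloor\xi\rfloor^{j+1}_k}$ is dominated by $\B^{(\la-1)}(\chg{\lfloor\xi\rfloor^j_k})$, a bound one verifies by reading off the coefficients from the explicit form of $\lfloor\cdot\rfloor_k$.

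For the leftmost inequality $\chg{(m-1)}<\B(\chg{\lfloor\xi\rfloor^j_k})$, the argument splits via Lemma~\ref{lemmNFBminus}. If $\A(\xi-1)$ is in normal form, then $\chg{(m-1)}=\B(\chg{(\xi-1)})$, and since $\A(\xi-1)<m=\A(\lfloor\xi\rfloor^j_k)$, Lemma~\ref{lemmMajorize} produces the strict inequality. Otherwise, Lemma~\ref{lemmCZero} bounds $\chg{(m-1)}$ above by $1+\B(\upsquiggly\al-1+\chg b)$, which one compares against $\B(\chg{\lfloor\xi\rfloor^j_k})$ by using the structural description of $\lfloor\xi\rfloor^j_k$ together with Lemma~\ref{lemmSquigProp}.\ref{itSquigVs} to align the two expressions.

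The main obstacle will be the induction step for the middle inequality: because $\A(\lfloor\xi\rfloor^j_k)$ is not in normal form for $j\geq 1$ (despite equaling $m$ by \eqref{eqRecurs}), one cannot simply re-apply Lemma~\ref{lemNewLeft} with $\lfloor\xi\rfloor^j_k$ in place of $\xi$. The monotonicity-based workaround described above is the only route I see, and it requires carefully checking the coefficient side conditions of Lemma~\ref{lemmMajorize} at every descent through $\lfloor\cdot\rfloor_k$; this bookkeeping is where any hidden difficulty will lie.
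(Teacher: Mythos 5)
Your decomposition goes wrong in two places, and they are exactly the places where the real content of the corollary sits. First, the leftmost inequality: your case (a) rests on the identity $\chg{(m-1)}=\B(\chg{(\xi-1)})$, i.e.\ on $m-1=\A(\xi-1)$, which is false whenever the hypotheses of the corollary are in force: $m=\A(\xi)=\A^{(k)}(\xi)$ is obtained from $\A^{(0)}(\xi)=\A(\xi-1)$ by $k$ further strictly increasing iterations (Lemma \ref{lemmBoundTwo}), so $m-1\geq \A(\xi-1)+1$ always; Lemma \ref{lemmNFBminus} is about the normal form of the number $\A(\xi-1)$, not of $m-1$. Even granting the identity, the appeal to Lemma \ref{lemmMajorize} compares the wrong ordinals: for $j\geq 1$ one has $\lfloor\xi\rfloor^j_k<\al\leq\xi-1$, so the hypothesis $\al<\be$ of that lemma points the opposite way. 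The paper controls $m-1$ by a device absent from your proposal: since $\lfloor\xi\rfloor^{j_*}_k$ is a positive natural number, the penultimate iterate must have the form $\lfloor\xi\rfloor^{j_*-1}_k=\om+c$ (Corollary \ref{corFSInj}), and along the fundamental sequence of $\om$ each application of $\A$ adds exactly $1$, so by \eqref{eqRecurs} one gets the exact identity $m-1=\A^{(k-1)}(\lfloor\xi\rfloor^{j_*-1}_k)$; Lemma \ref{lemNewLeft} applied at this final stage yields $\chg{(m-1)}<1+\B^{(k-1)}(\chg{\lfloor\xi\rfloor^{j_*-1}_k})\leq 1+\B(\chg{\lfloor\xi\rfloor^{j_*-1}_k})$, and the non-increasing chain of $\B$-values then transfers this bound to every $j<j_*$.

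Second, the induction step of the middle inequality. You reduce it to the coefficient side condition of Lemma \ref{lemmMajorize} and defer its verification to ``reading off the coefficients,'' but that condition is not bookkeeping: the new coefficient of $\chg{\lfloor\xi\rfloor^{j+1}_k}$ introduced by the fundamental sequence is $\chg{\A^{(k-1)}(\lfloor\xi\rfloor^j_k)}$, the base change of a number just below $m$, and dominating it by $\B^{(k-1)}(\chg{\lfloor\xi\rfloor^j_k})$ is precisely an estimate of the type of Lemma \ref{lemNewLeft}(1) for the argument $\lfloor\xi\rfloor^j_k$ (equivalently, essentially the leftmost inequality you are in the middle of proving). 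You correctly flag this as the hidden difficulty, but the proposal supplies no argument for it, so the descent $\B(\chg{\lfloor\xi\rfloor^{j+1}_k})\leq\B(\chg{\lfloor\xi\rfloor^j_k})$ is not established. What is correct and agrees with the paper: the rightmost inequality $1+\B^{(k)}(\chg\xi)\leq\chg m$ and the base case $j=1$ via Lemma \ref{lemNewLeft}(\ref{itNewLeftTwo}) at $i=k-1$. The missing ingredients are the Lemma \ref{lemNewLeft}-type bounds for the iterates and the $\om+c$ bottoming-out observation, which together are how the paper closes both gaps.
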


\begin{proof}
Lemma \ref{lemNewLeft}.\ref{itNewLeftTwo} and induction on $j<j_*$ yield
\[\B(\chg{\lfloor\xi\rfloor^{j+1}_k }) \leq \B(\chg{\lfloor\xi\rfloor^j_k }) < 1+  \B^{(k)}(\chg\xi).\]
Write $\lfloor\xi\rfloor^{j_*-1}_k=\ga+c$ with $\ga$ a limit.
Since $0<\lfloor\xi\rfloor^{j_*}_k<\om$ and $\lfloor\xi\rfloor^{j_*}_k = \fs{\ga}{q}$ for some $q$, by Corollary \ref{corFSInj}, we must have that $\ga=\om$ and thus $\lfloor\xi\rfloor^{j_*-1}_k=\om+c$.
For all $i $,
\begin{align*}
\A^{(i+1)}( {\lfloor\xi\rfloor^{j_*-1}_k }) & =\A (\fs{\om}{\A^{(i)}( {\lfloor\xi\rfloor^{j_*-1}_k })}) \\
& = \A ( {\A^{(i)}( {\lfloor\xi\rfloor^{j_*-1}_k })}) = {\A^{(i)}( {\lfloor\xi\rfloor^{j_*-1}_k })} + 1, 
\end{align*}
and since $m= \A^{(k)}( {\lfloor\xi\rfloor^{j_*-1}_k })$, we see that
$m-1 = \A^{(k-1)}( {\lfloor\xi\rfloor^{j_*-1}_k })$.
Lemma \ref{lemNewLeft} then yields
\begin{align*}
\chg{(m-1)}  & =\chg  \A^{(k-1)}( {\lfloor\xi\rfloor^{j_*-1}_k })\\
& < 1+ \B^{(k-1)}( \chg {\lfloor\xi\rfloor^{j_*-1}_k }) \leq 1+ \B ( \chg {\lfloor\xi\rfloor^{j_*-1}_k }). & \qedhere
\end{align*}
\end{proof}

We are now ready to prove that our normal forms are base-change maximal.

\begin{theorem}\label{theoMax}
If $2\leq k<\la\leq \om$ and $\A(\zeta)\gknf \A(\xi)$, then $\A(\bch {\zeta } {\bases k\la}) \leq\A( \bch {\xi } {\bases k\la}) $.
\end{theorem}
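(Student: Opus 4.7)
The plan is strong induction on $m = \A_k(\zeta) = \A_k(\xi)$. By the induction hypothesis and Proposition~\ref{propMaxToMon}, base change is strictly monotone on naturals below $m$, which by Remark~\ref{remMaxToMon} unlocks every preparatory lemma of Sections~\ref{secNF}--\ref{secBC}. The trivial case $m < \om$ forces $\zeta = \xi = m-1$, so assume $m \geq \om$ and let $(\xi_i)_{i\leq n}$ be the normal form sequence, with $n \geq 1$, $\xi = \xi_n$, and $\penum m = \A(\xi_{n-1}) < m$. Throughout write $\chg{}$ for $\bch{}{\bases k\la}$ and $\B$ for $\A_\la$.

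The next step is a case split on $\zeta$ versus $\xi$. The case $\zeta = \xi$ is immediate. If $\zeta < \xi$, then Lemma~\ref{lemmBchMonOrd} gives $\chg\zeta < \chg\xi$, and Lemma~\ref{lemmBoundTwo}.\ref{itBoundTwoOne} gives $\mc\zeta \leq \A(\zeta-1) < m$. If $\zeta > \xi$, the maximality clause defining $\xi_n$ in Definition~\ref{defNF} combined with $\A(\zeta) \leq m$ forces $\mc\zeta < \A(\xi_{n-1}) = \penum m$, since otherwise $\zeta$ would be a strictly bigger witness than $\xi_n$.

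The concluding step is to apply Lemma~\ref{lemmMajorize} (for $\la < \om$) or Proposition~\ref{propAOrder} (for $\la = \om$) to deduce $\A_\la(\chg\zeta) \leq \A_\la(\chg\xi)$. Each demands two verifications: (a)~$\chg\zeta \leq \chg\xi$, and (b)~the coefficient bound $\mck{\la^+}{\chg\zeta} < \A_\la^{(k-1)}(\chg\xi)$, respectively $\mck\Om{\chg\zeta} < \A_\om(\chg\xi)$. For (b), the induction hypothesis bounds each base-changed coefficient of $\zeta$ by $\chg{\penum m}$ (in the subcase $\zeta > \xi$) or $\chg{(m-1)}$ (in the subcase $\zeta < \xi$), and Corollary~\ref{corMminus} together with Lemma~\ref{lemNewLeft} supplies the required comparison with the appropriate iterate of $\B$ at $\chg\xi$.

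I expect the main obstacle to arise in the subcase $\zeta > \xi$. There $\chg\zeta$ need not be bounded by $\chg\xi$ in the ordinal ordering, so (a) fails and one must instead verify $\B(\chg\zeta) \leq \B(\chg\xi)$ directly, exploiting the condition $\mc\zeta < \penum m$ to rule out $\chg\zeta$ lying in the problematic range $\big(\lfloor \chg\xi\rfloor_{\la^+},\text{limit part of }\chg\xi\big)$ where Corollary~\ref{corBetweenSlow} would make $\B$ increase. Here Lemma~\ref{lemmBetweenSlow} should play the key role: the coefficients of $\chg\zeta$ are too small to reach $\A_\la^{(k-1)}(\chg\xi)$, so $\chg\zeta$ cannot bridge that gap. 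For $\la = \om$ the argument simplifies considerably because $\A_\om^{(i)} = \A_\om$ for all $i$, so (b) reduces cleanly to Corollary~\ref{corMminus}.
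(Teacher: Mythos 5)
There is a genuine gap, and it sits exactly where you predicted trouble: the subcase $\zeta>\xi$. Your only constraint there is $\mc\zeta<\penum m$, and since base change is monotone on ordinals with small coefficients (Lemma~\ref{lemmBchMonOrd}), $\zeta>\xi$ forces $\chg\zeta>\chg\xi$. So neither Lemma~\ref{lemmMajorize} nor Proposition~\ref{propAOrder} can be applied with target $\chg\xi$ (both require the ordinal inequality $\chg\zeta<\chg\xi$), and the fallback you sketch does not produce the needed bound: Lemma~\ref{lemmBetweenSlow} and Corollary~\ref{corBetweenSlow} only give \emph{lower} bounds on $\A$-values of ordinals inside an interval, so they cannot show $\A_\la(\chg\zeta)\leq\A_\la(\chg\xi)$ for the \emph{larger} ordinal $\chg\zeta$. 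The missing idea is a preliminary step the paper takes before any case split: since $\penum\xi$ is maximal with $\A(\penum\xi)\leq m$ and the relevant coefficient condition, one has $m\in\big(\A(\penum\xi),\A(\penum\xi+1)\big)$, and Lemma~\ref{lemmOneMore} then rules out $\zeta\geq\penum\xi$; hence $\zeta\in(\xi,\penum\xi)$. With that in hand the majorization is run against $\chg{\penum\xi}$, not $\chg\xi$: $\chg\zeta<\chg{\penum\xi}$, the bound $\mc\zeta\leq\penum m-1$ together with Corollary~\ref{corMminus} applied at $\penum m$ gives the coefficient hypothesis, and then $\A_\la(\chg\zeta)<\A_\la(\chg{\penum\xi})=\chg{\penum m}<\chg m=\A_\la(\chg\xi)$. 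Without the $\zeta<\penum\xi$ step there is simply no ordinal above $\chg\zeta$ to majorize against, and your argument has no mechanism to close this case (the ``simplification'' for $\la=\om$ does not help, since Proposition~\ref{propAOrder} also needs $\chg\zeta<\chg\xi$).

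The subcase $\zeta<\xi$ also does not go through as you set it up. You want $\mck{\la^+}{\chg\zeta}<\A^{(k-1)}_\la(\chg\xi)$, but bounding coefficients of $\zeta$ by $m-1$ and invoking Corollary~\ref{corMminus} only yields $\mck{\la^+}{\chg\zeta}\leq\chg{(m-1)}<1+\A^{(k)}_\la(\chg\xi)$, i.e.\ a bound at the $k$-th iterate, one level too high; in the Goodstein-relevant case $\la=k+1$ the iterate index admitted by the stated Lemma~\ref{lemmMajorize} is $\la-2=k-1$, and $\chg{(m-1)}$ is in general \emph{not} below $\A^{(k-1)}_\la(\chg\xi)$, so the hypothesis cannot be verified this way. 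The paper avoids the issue structurally: using that $\A(\zeta)=m$ exactly and Lemma~\ref{lemmBetweenSlow}, any $\zeta<\xi$ with $\A(\zeta)=m$ must be one of the iterated predecessors $\lfloor\xi\rfloor^i_k$, and for these Corollary~\ref{corMminus} gives $\A_\la(\chg{\lfloor\xi\rfloor^i_k})<1+\A^{(k)}_\la(\chg\xi)\leq\chg m$ directly, with no majorization step and no coefficient bookkeeping at all. So both of your cases need repair: the first by identifying $\zeta$ as an iterated predecessor, the second by first proving $\zeta<\penum\xi$ via Lemma~\ref{lemmOneMore} and redirecting the comparison to $\chg{\penum\xi}$.
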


\begin{proof}
Write $\uparrow $ for $\bch{}{\bases k\la}$.
Let $m=\A(\xi)$.
Note that $\zeta<\penum \xi$.
This is because $m\in \big (\A(\penum \xi), \A(\penum \xi+1)\big)$, but by Lemma \ref{lemmOneMore}, if $\zeta\geq \penum \xi$ then $\A(\zeta) \not \in \big (\A(\penum \xi), \A(\penum \xi+1)\big)$.
Thus we write $\xi=  \al+ a$ and $\zeta= \beta+b$ with $\al,\be$ limits and consider two cases.

\begincases
\item $(\zeta<\xi)$.
Let $i_*$ be the unique number such that $0<\lfloor\xi\rfloor_k^{i_*}<\om$.
We claim that there is $i\leq i_*$ such that $\zeta= \lfloor \xi\rfloor^i_k$.
If not, observe that $\A(\zeta) = m$ implies that $\zeta\geq \lfloor\xi\rfloor_k^{i_*}$, for otherwise we would have $\A(\zeta) < \A(\lfloor\xi\rfloor_k^{i_*}) = \A(\xi) $ (since $\A$ is the successor on the natural numbers).
Thus there is $i\leq i_*$ such that $\lfloor \xi\rfloor^{i+1}_k < \zeta \leq \lfloor \xi\rfloor^{i}_k$.
Write $\lfloor \xi\rfloor^{i}_k = \gamma+c$ with $\gamma$ a limit or zero.
If $\zeta<\gamma$, then Lemma \ref{lemmBetweenSlow} yields $\A(\zeta)>\A(\lfloor \xi\rfloor^{i}_k )=\A(\xi)$, contrary to assumption.
Thus $\zeta=\gamma+c'$, but then $\A(\zeta)=\A(\xi)$ yields $c=c'$ and $\zeta= \lfloor \xi\rfloor^{i}_k$.
But by Corollary \ref{corMminus}, $ {\A(\chg{\lfloor \xi\rfloor^{i}_k})}  < \A(\chg\xi)$, as needed.

\item $\zeta\in (\xi,\penum \xi)$.
Using the induction hypothesis (which yields monotonicity below $m$) and Lemma \ref{lemmBchMonOrd}, we see that $ \chg {\zeta} < \chg {\penum \xi} $.
Maximality of $\xi$ yields $\mc{\zeta} \leq \penum m-1 $, so Corollary \ref{corMminus} yields $\mc{\chg {\beta} } < \B^{(k)} (\chg{\penum \xi})$.
Lemma \ref{lemmMajorize} then yields $\B(\chg\zeta) <\B(\chg{\penum \xi}) < \B(\chg\xi)$.\qedhere
\end{enumerate}
\end{proof}



Monotonicity of base change is then immediate from Proposition \ref{propMaxToMon}.

\begin{corollary}\label{corMon}
If $n<m$ and $2\leq k<\la\leq \om$, then $\bch n{\bases k\la} < \bch m{\bases k\la}$.
\end{corollary}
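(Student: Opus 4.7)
The plan is to simply combine Theorem \ref{theoMax} with Proposition \ref{propMaxToMon}, the latter of which was stated precisely to convert base-change maximality into monotonicity. In more detail, Proposition \ref{propMaxToMon} requires the hypothesis that for each $m$ and each $n < m$ expressible as $n = \A_k(\zeta)$, we have $\chg n \geq \A_\la(\chg\zeta)$. So my first step is to verify this hypothesis for every $m \in \N$.

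To do so, fix $m$ and pick $n < m$ with $n = \A_k(\zeta)$ for some $\zeta < \ve_0$. The number $n$ has a (unique) normal form $n \gknf \A_k(\xi)$, so by the definition of the base-change operator, $\chg n = \A_\la(\chg \xi)$. Theorem \ref{theoMax} applied to $\A_k(\zeta) \gknf \A_k(\xi)$ then yields
\[
\A_\la(\chg \zeta) \;\leq\; \A_\la(\chg \xi) \;=\; \chg n,
\]
which is exactly the required inequality. Since $m$ was arbitrary, the hypothesis of Proposition \ref{propMaxToMon} holds unconditionally.

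Now I apply Proposition \ref{propMaxToMon}: for every $m$, and in particular for any $m$ strictly larger than the pair of natural numbers we wish to compare, the map $i \mapsto \bch i{\bases k\la}$ is strictly increasing on $\{0, 1, \dots, m-1\}$. Given $n < m$, choose any $M > m$ and apply this to the pair $i = n$, $j = m$ within $\{0, \dots, M-1\}$ to conclude $\bch n{\bases k\la} < \bch m{\bases k\la}$, as desired.

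There is no real obstacle here: all of the work has already been done in proving Theorem \ref{theoMax}. The only thing to note is that although many lemmas in the previous section were stated under the assumption of ``monotonicity of base change below $m$'' (per Remark \ref{remMaxToMon}), that assumption is now discharged once and for all by this corollary, so those earlier results hold unconditionally from this point onward.
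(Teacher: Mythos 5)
Your proposal is correct and matches the paper's own argument, which derives Corollary \ref{corMon} immediately from Proposition \ref{propMaxToMon} once Theorem \ref{theoMax} supplies its hypothesis (via the normal form of each $n<m$). Your closing remark about discharging the ``monotonicity below $m$'' assumption is exactly the point of Remark \ref{remMaxToMon}, so nothing is missing.
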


\section{Normal form preservation}\label{secNorm}

The next step in showing that the Goodstein process for our normal forms terminates is to show that normal forms are preserved after base-change, which we will use later to show that the ordinal assignment gives a decreasing sequence of ordinals.

\begin{lemma}\label{lemmBchPreserve}
Let $2\leq k<\ell < \om$ and write $\chg{}$ for $\bch{}{\bases k\ell}$.
If $m\gknf \A_k(\xi)$ and $2\leq k<\ell<\om$, then $\chg m \nfp \ell \A_\ell(\chg \xi)$.
\end{lemma}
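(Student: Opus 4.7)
Plan: I will proceed by induction on the length $n$ of the base-$k$ normal form sequence $(\xi_i)_{i\leq n}$ of $m$; the case $n=0$ (so $m = 1$) is immediate. For the inductive step, the induction hypothesis gives $\chg{\penum m} \nfp{\ell} \A_\ell(\chg{\penum \xi})$ with normal form sequence $(\chg{\xi_i})_{i\leq n-1}$, and in particular $\A_\ell(\chg{\penum \xi}) = \chg{\penum m}$. The candidate normal form sequence for $\chg m$ is this sequence extended by $\chg \xi$; the identity $\A_\ell(\chg \xi) = \chg m$ is immediate from the definition of base change applied to $m \gknf \A_k(\xi)$. It thus suffices to show that $\chg \xi$ is the $\nu$-maximum of the set $S := \{\zeta : \A_\ell(\zeta) \leq \chg m \text{ and } \mc \zeta \geq \A_\ell(\chg{\penum \xi})\}$.

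That $\chg \xi \in S$ is immediate: monotonicity of base change (Corollary~\ref{corMon}) combined with the identity $\A_\ell(\chg{\penum \xi}) = \chg{\A_k(\penum \xi)}$ from the induction hypothesis and the observation $\mc{\chg \xi} = \chg{\mc \xi}$ give $\mc{\chg \xi} \geq \A_\ell(\chg{\penum \xi})$. For the $\nu$-maximality, I plan to invoke Lemma~\ref{lemmGeneralNormalForm} in base $\ell$ with ``$\xi$'' set to $\chg{\penum \xi}$ and ``$\zeta$'' set to $\chg \xi$. Condition (b) has just been verified; condition (a), which asks $\A_\ell(\chg{\penum \xi} + 1) > \chg m$, follows from maximality of $\penum \xi$ in base $k$---namely $\A_k(\penum \xi + 1) > m$, since adding $1$ preserves the coefficient condition defining $\penum \xi$---combined with a direct computation of $\A_\ell$ at $\chg{\penum \xi}+1$ via the recurrence of Definition~\ref{defA}, noting that $\A_\ell^{(0)}(\chg{\penum \xi}+1) = \A_\ell(\chg{\penum \xi}) = \chg{\penum m}$ and then iterating $\ell$ times using Lemma~\ref{lemmBoundTwo}.

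The main obstacle is condition (c): for any $\theta$ with $\mc \theta \geq \A_\ell(\chg{\penum \xi})$ and $\A_\ell(\theta) \leq \chg m$, I must derive $\theta \leq \chg \xi$. I plan a case split. If $\theta \geq \chg{\penum \xi} + 1$, Lemma~\ref{lemmOneMore} applied to the pair $(\chg{\penum \xi}, \theta)$ forces $\A_\ell(\theta) \leq \A_\ell(\chg{\penum \xi}) = \chg{\penum m}$ or $\A_\ell(\theta) \geq \A_\ell(\chg{\penum \xi}+1)$; the former contradicts the coefficient bound $\mc \theta \geq \chg{\penum m}$ together with the strict inequality $\mc \theta < \A_\ell(\theta)$ from Lemma~\ref{lemmBoundTwo}, while the latter contradicts $\A_\ell(\theta) \leq \chg m$ via condition (a). In the remaining case $\chg \xi < \theta \leq \chg{\penum \xi}$, I will construct a base-$k$ pullback $\theta^*$ with $\chg{\theta^*} \geq \theta$ satisfying $\A_k(\theta^*) \leq m$ and $\mc{\theta^*} \geq \A_k(\penum \xi)$, using Lemma~\ref{lemTrunc} to handle coefficients of $\theta$ not in the image of $\chg{}$ (which the coefficient lower bound $\mc \theta \geq \chg{\A_k(\penum \xi)}$ makes tractable by strongly constraining the $\omega$-normal form of $\theta$); maximality of $\xi$ in base $k$ then gives $\theta^* \leq \xi$, whence monotonicity of base change on ordinals (Lemma~\ref{lemmBchMonOrd}) yields $\theta \leq \chg{\theta^*} \leq \chg \xi$. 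The edge cases in which $\penum \xi < \omega$, where Lemma~\ref{lemmGeneralNormalForm} does not formally apply (since its conditions implicitly require ``$\xi$''${}\geq\omega$), will be handled by direct case analysis on $\theta$, leveraging the same coefficient/value incompatibility used above.
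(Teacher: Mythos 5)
Your overall skeleton — induction on $m$, reducing the statement to the three conditions of Lemma \ref{lemmGeneralNormalForm}, and handling the maximality condition (c) by pulling a purported base-$\ell$ counterexample back to base $k$ and invoking maximality of $\xi$ — is the same as the paper's. But there is a genuine gap at exactly the point where the real work lies. In your case $\chg\xi<\theta\leq\chg{\penum\xi}$ you posit a pullback $\theta^*$ with $\chg{\theta^*}\geq\theta$, $\mc{\theta^*}\geq\A_k(\penum\xi)$ and $\A_k(\theta^*)\leq m$, but the last property is asserted rather than proved, and it does not follow from Lemma \ref{lemTrunc} or from monotonicity. Since $\A_\ell$ is not monotone, the hypothesis $\A_\ell(\theta)\leq\chg m$ gives no control on $\A_\ell(\chg{\theta^*})$ when $\chg{\theta^*}\geq\theta$; and even an inequality $\A_\ell(\chg{\theta^*})\leq\chg m$ would still have to be reflected back to $\A_k(\theta^*)\leq m$, for which no previously established result is available (indeed, such a reflection is essentially what this lemma is about). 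The paper's proof makes this work only by a delicate construction: it takes the counterexample $\hat\theta$ to be \emph{minimal}, so that its finite part is $0$ or $\chg{\penum m}$ and hence lies in the image of the base change; it uses Lemma \ref{lemTrunc} to realize the relevant $\fs{\hat\be}{\hat c}$ as $\chg\zeta$ with $\zeta<\xi$; it defines the pullback \emph{exactly} as $\lceil\zeta\rceil+b$, using Lemma \ref{lemmCeil} and Lemma \ref{lemmSquigProp} so that base change commutes precisely with the fundamental sequences involved; and then it proves, by a simultaneous induction on $i$, that $\chg{\A^{(i)}(\theta^*)}\leq\A_\ell^{(i)}(\hat\theta)$, that $\A^{(i)}(\theta^*)<c$, that $\A^{(i+1)}(\theta^*)<m$, and that the intermediate values are in normal form (needed, via Lemma \ref{lemmAiNorm}, so that base change commutes with each step of the recursion). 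This step-by-step comparison is the technical core of the argument, and your proposal omits it entirely.

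A secondary, smaller issue: your verification of condition (a), namely $\chg m<\A_\ell(\chg{\penum\xi}+1)$, is also under-argued. Knowing $\A_k(\penum\xi+1)>m$ and computing $\A_\ell^{(0)}(\chg{\penum\xi}+1)=\chg{\penum m}$ does not by itself compare $\A_\ell(\chg{\penum\xi}+1)$ with $\chg m=\A_\ell(\chg\xi)$, because base change need not preserve inequalities between $\A$-values. The paper handles this by locating $\xi$ among the iterated predecessors $\predek{\penum\xi+1}{k}^i$ of $\penum\xi+1$ and then comparing the base-changed values via Lemma \ref{lemNewLeft} and Corollary \ref{corMminus}. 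Your treatment of the case $\theta\geq\chg{\penum\xi}+1$ of condition (c) via Lemma \ref{lemmOneMore} is fine, but it too depends on condition (a), so the whole argument currently rests on two unproved transfer steps.
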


\begin{proof}
Write $\A$ for $\A_k$ and $\B$ for $\A_\ell$, and proceed by induction on $m$.
By induction hypothesis we have that $\chg{\penum m} =_\ell \B(\chg{\penum \xi}) $.
By monotonicity (Corollary \ref{corMon}), $\mc{\chg\xi}>\mc{\chg {\penum \xi}}$.
So, in view of Lemma \ref{lemmGeneralNormalForm}, it suffices to show that $\chg m < \B(\chg {\penum \xi} + 1)$ and that if $\theta$ is such that $\B(\theta) \leq \chg m$ and $\mc{\theta}\geq \mc{\chg {\penum \xi}}$, then $\theta\leq\chg \xi$.

To see that $ \B(\chg \xi  ) <  \B(\chg {\penum \xi} +1)$, write $\predek {{\penum\xi} +1} k^i=\gamma_i+c_i $ with $\gamma_i$ a limit.
Then, $\xi  = \gamma_i+d$ for some $i$ and $d<c_i$, since otherwise we would have $\A(\xi ) \geq \A({\penum\xi} +1 )$.
From this it is readily checked that $\B(\chg \xi  ) < \B(\chg {\predek {{\penum\xi} +1} k^i}) < \B(\chg {\penum\xi} +1)$.

With this, it remains to show that $\chg{\xi }$ is maximal.
Towards a contradiction, assume that $\hat \theta $ is minimal with the property that $\hat\theta >\xi $, $\mc{\hat \theta }\geq \chg{\penum m}$, and $\B(\hat \theta )\leq \chg m$.
Since $\B(\hat\theta ) = \B(\lfloor{\hat \theta }\rfloor_\ell)$, we must have that $\lfloor{\hat \theta }\rfloor_\ell\leq \chg {\xi}$.
Write $\xi= \al+a$ and $\hat\theta = \hat\be+\hat b$ with $\al,\hat \be$ limits, so that $\chg\xi=\chg\al+\chg a$.
We cannot have $\chg \al=\hat\be$, since in this case $\chg b>\hat a$, and $\B(\chg{\xi})<\B({\hat\theta})$.
Thus $\hat\be>\chg \al$.
By minimality of $\hat\theta $, either $\mc{\hat \be}\geq \chg {\penum m}$ and $\hat b  = 0$, or else $\mc{\be}< \chg {\penum m}$ and $\hat b  = \chg{\penum m}$.
In either case, there is $b$ such that $\chg b= \hat b $.

By Lemma \ref{lemTrunc}, there is a truncation $\hat\zeta $ of $\chg\xi$ and some $\hat c\geq \B^{(k)} (\hat\theta)$ such that $\hat\zeta = \fs{\hat\be}{\hat c}  $.
Since $\hat\zeta$ is a truncation of $\chg\xi$, we have that $\hat\zeta=\chg\zeta$ for some $\zeta<\xi$; in particular, $\hat c= \chg c$ for some $c$.

Let $\theta= \lceil\zeta\rceil +b  $.
Lemma \ref{lemmCeil} imples that $ {\fs {\lceil\zeta\rceil } c} =  \zeta $ and $\hat\be = \upsquiggly \be $.
We aim to prove that $\A(\theta)\leq m$, and proceed by assuming $\A(\theta)> m$ toward a contradiction.
We claim that for all $i<k$,
\begin{enumerate}[label=(\alph*)]

\item\label{claimPreserveOne} $\chg{  \A^{(i)} (\theta) }   \leq   \B^{(i)} (\hat\theta) $,

\item\label{claimPreserveTwo} $  \A^{(i)} (\theta)  < c$,

\item\label{claimPreserveThree} $  \A^{(i+1)} (\theta)  < m$, and

\item\label{claimPreserveFour} $\A (\fs\theta{\A^{(j)} (\theta) })$ is in normal form for $j<k-1$.

\end{enumerate}
The third item for $i=k-1$ will yield our desired claim, since it becomes $\A(\theta)<m$, contradicting that $\A(\xi)$ is in normal form.
However, we must prove the four claims simultaneously for induction on $i$ to go through.

Assume that all four claims hold for $i'<i$.
Let $e = A^{(i)}(\theta)$.

\proof[Proof of \ref{claimPreserveOne}.] When $i=0$, we have by Lemma \ref{lemmBchBigger} that
\[\chg e = \chg{\A(\theta-1)} \leq \B (\hat\theta-1) = \B^{(0)}(\hat\theta)  ,\]
establishing the first claim when $i=0$.
For $i>0$, we use the induction hypothesis on the fourth claim to see that
\begin{align}
\nonumber\chg e &= \chg{ \A ( \fs \be{\A^{(i-1)} (\theta)})} \\
\nonumber& = \B( \chg{\fs \be{\A^{(i-1)} (\theta)}} ) = \B(\fs{\upsquiggly \be}{\chg{ \A^{(i-1)} (\theta)}}) \\
\label{itApplyIH}& \leq \B(\fs{\upsquiggly \be}{{ \B^{(i-1)} (\hat \theta)}}) = \B^{(i)} (\hat\theta) ,
\end{align}
where in \eqref{itApplyIH} we have used the induction hypothesis to see that $ \chg{ \A^{(i-1)} (\theta)} \leq \B^{(i-1)} (\hat\theta) $.
This establishes the first claim when $i>0$.

\proof[Proof of \ref{claimPreserveTwo}.]
Using the previous claim we see that
\[\chg e = \chg \A^{(i)} ( \theta) \leq \B^{(i)} (\hat\theta)  < \B^{(k)} (\hat\theta) <\chg c,\]
and by monotonicity of the base change operator, $e < c$, establishing the second claim.

\proof[Proof of \ref{claimPreserveThree}.]
Since $e<c$, we have that $\fs \be e <\xi$ and $\mc{\fs\be e}\leq \mc\zeta \leq \mc\xi$.
By Lemma \ref{lemmMajorize}, $\A^{(i+1)}(\theta) = \A(\fs\be{e}) < \A(\xi) = m$.

\proof[Proof of \ref{claimPreserveFour}.]
Let $j<k-1$ and $\chi = \fs \be e$.
Note that the previous item yields $\A(\chi)<m<\A(\penum \xi+1)$.
Since $\mc\chi\geq\penum m$, by Lemma \ref{lemmGeneralNormalForm}, it suffices to show that if $\mc\eta \geq \penum m$ and $\eta>\chi$, then $\A(\eta)>\A(\chi)$.
If $\eta>\xi$, then the maximality of $\xi$ yields $\A(\eta)>\A(\xi)>\A(\chi)$.
If $\eta\leq \xi$, then $\eta\in(\chi,\theta )$, which by Lemma \ref{lemmBetweenSlow}, implies that $\A(\eta)>\A(\chi)$.
Thus $\A(\chi)$ is in normal form, as required.
\medskip

Applying \ref{claimPreserveThree} with $i=n-1$, we conclude that $\xi$ is not maximal with $\mc\xi\geq \penum m$ and $\A(\xi)\leq m$, contradicting the original assumption that $\A(\xi)$ is in normal form.
Thus we conclude that $\B(\chg\xi)$ is in normal form as well.
\end{proof}

With this and a simple induction, we obtain the following useful property.

\begin{corollary}\label{corPreserve}
If $2\leq k<\ell <\la \leq \om$ and $m\in\mathbb N$, then
\[\bch m{\bases k\la} = \bch {\bch m{\bases k\ell}}{\bases \ell\la}.\]
\end{corollary}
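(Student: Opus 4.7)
The plan is to argue by induction on $m \in \N$, with a subsidiary transfinite induction on ordinals to handle the corresponding identity at the ordinal level. The base case $m=0$ is immediate, since every base-change operator sends $0$ to $0$.

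For the inductive step, suppose $m > 0$ and the identity holds for all smaller natural numbers. Write $m \gknf \A_k(\xi)$. By definition of the base-change operator on a natural number in normal form,
\[
\bch m{\bases k\la} \;=\; \A_\la\bigl(\bch \xi{\bases k\la}\bigr).
\]
Lemma \ref{lemmBchPreserve} gives $\bch m{\bases k\ell} \nfp{\ell} \A_\ell(\bch \xi{\bases k\ell})$, and a second unfolding of the base change yields
\[
\bch{\bch m{\bases k\ell}}{\bases \ell\la} \;=\; \A_\la\bigl(\bch{\bch \xi{\bases k\ell}}{\bases \ell\la}\bigr).
\]
Thus it suffices to establish the ordinal identity
\[
\bch \xi{\bases k\la} \;=\; \bch{\bch \xi{\bases k\ell}}{\bases \ell\la}. \qquad (\star)
\]

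To prove $(\star)$, I would induct on the ordinal $\xi<\ve_0$. The case $\xi=0$ is trivial. For $\xi = \om^\alpha b + \gamma$ in $\om$-normal form, both sides unfold through the ordinal clauses of the base-change definition. Writing $\kappa = \la^+$, the left side expands to $\kappa^{\bch \alpha{\bases k\la}}\bch b{\bases k\la} + \bch \gamma{\bases k\la}$. On the right side, $\bch \xi{\bases k\ell} = \om^{\bch \alpha{\bases k\ell}}\bch b{\bases k\ell} + \bch \gamma{\bases k\ell}$ since $\ell^+ = \om$, and a second application of the base change then produces $\kappa^{\bch{\bch \alpha{\bases k\ell}}{\bases \ell\la}}\bch{\bch b{\bases k\ell}}{\bases \ell\la} + \bch{\bch \gamma{\bases k\ell}}{\bases \ell\la}$. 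The ordinal induction hypothesis matches the exponent and tail, and for the natural-number coefficient $b$ one invokes the outer induction on $m$: by Lemma \ref{lemmBoundTwo}, $b \leq \mc{\xi} \leq \A_k(\xi-1) < \A_k(\xi) = m$, so $b < m$ and the hypothesis supplies $\bch b{\bases k\la} = \bch{\bch b{\bases k\ell}}{\bases \ell\la}$.

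The only real friction is bookkeeping rather than a genuine obstacle: one must verify that the symbol used for the base of the ordinal expansion matches in both unfoldings. This is automatic once one observes that $\ell^+ = \om$ (since $\ell < \om$), so the intermediate ordinal produced by $\bch{}{\bases k\ell}$ remains in $\om$-normal form, and the subsequent $\bch{}{\bases \ell\la}$ converts $\om$ to $\kappa = \la^+$, agreeing with the direct base change $\bch{}{\bases k\la}$. This works uniformly whether $\la<\om$ (so $\kappa=\om$) or $\la=\om$ (so $\kappa=\Om$), and the interleaved induction on $m$ and on $\xi$, linked by the inequality $\mc\xi < m$, closes the argument.
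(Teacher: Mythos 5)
Your proposal is correct and matches the paper's intended argument: the paper obtains this corollary from Lemma \ref{lemmBchPreserve} by exactly the ``simple induction'' on $m$ that you spell out, unfolding both sides via the normal form of $m$ and of $\bch m{\bases k\ell}$ and reducing to the ordinal identity, with coefficients handled through the outer induction using $\mc\xi\leq\A_k(\xi-1)<m$. The one point you dismiss as automatic bookkeeping deserves a word: that the intermediate expression $\om^{\bch\al{\bases k\ell}}\bch b{\bases k\ell}+\bch\ga{\bases k\ell}$ is again in $\om$-normal form (so that the second base change unfolds clause-by-clause as you claim) is not purely a matter of the base symbol being $\om$, but follows from monotonicity of the base change on ordinals (Lemma \ref{lemmBchMonOrd} together with Corollary \ref{corMon}, both available at this stage), which gives $\bch\ga{\bases k\ell}<\om^{\bch\al{\bases k\ell}}$ from $\ga<\om^\al$.
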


\section{Fast Goodstein Walks}\label{secWalk}

Now we are ready to define our fast Goodstein processes and prove that they terminate.
Using base-change maximality, we will also show that Goodstein walks based on the $\A$ function always terminate, even if normal forms are not used.

\begin{definition}
Given a natural number $m$, we define a sequence $\big (\good mi \big )_{i< \alpha}$, where $\alpha \leq \omega$, by the following recursion.
\begin{enumerate}

\item $\good m0 = m$;

\item if $\good mi > 0$, then $\good m{i+1}  = \bch {\good mi} {\bases{k+2}{k+3}} - 1$;

\item if $\good mi = 0$, then $\alpha = i+1$ and the sequence terminates.

\end{enumerate}
The sequence $\big (\good mi\big )_{i<\al}$ is the {\em Fast Goodstein sequence starting on $m$.} 
\end{definition}

\begin{theorem}\label{theoGood}
Given any $m\in \N$, the Fast Goodstein sequence starting on $m$ terminates on finite time.
\end{theorem}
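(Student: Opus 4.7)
The plan is an ordinal-descent argument that leverages the extension of base change to $\la=\om$, where it takes natural numbers to countable ordinals below the Bachmann--Howard ordinal $\A_\om(\ve_{\Om+1})$. To each index $i$ of the Goodstein sequence I assign the ordinal
\[
o_i \;=\; \bch{\good mi}{\bases{i+2}{\omega}}.
\]
Termination will then follow once I establish $o_{i+1}<o_i$ whenever $\good mi>0$, since $(o_i)$ is a strictly decreasing transfinite sequence and hence finite by well-foundedness of the ordinals.

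For the descent step, set $n=\bch{\good mi}{\bases{i+2}{i+3}}$, so that $\good m{i+1}=n-1$; note $n\geq \good mi\geq 1$ by Lemma~\ref{lemmBchBigger}, so this subtraction makes sense. I would then invoke Corollary~\ref{corPreserve}, applied with $k=i+2$, $\ell=i+3$, $\la=\om$, to factor the base change through the intermediate base $i+3$:
\[
o_i \;=\; \bch{\good mi}{\bases{i+2}{\omega}}
\;=\; \bch{\bch{\good mi}{\bases{i+2}{i+3}}}{\bases{i+3}{\omega}}
\;=\; \bch{n}{\bases{i+3}{\omega}}.
\]
Monotonicity of base change to $\la=\om$ (Corollary~\ref{corMon}, applied with $k=i+3$) then gives
\[
o_{i+1} \;=\; \bch{n-1}{\bases{i+3}{\omega}}
\;<\; \bch{n}{\bases{i+3}{\omega}}
\;=\; o_i.
\]

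In reality, all the genuinely difficult work has already been absorbed into the preceding sections: base-change maximality (Theorem~\ref{theoMax}) yields monotonicity (Corollary~\ref{corMon}), and normal-form preservation (Lemma~\ref{lemmBchPreserve}) yields the compositional identity (Corollary~\ref{corPreserve}). The only subtlety to check is that these two corollaries remain valid at the boundary case $\la=\om$, where the codomain of base change is no longer $\N$ but an initial segment of the countable ordinals; this is ensured by the fact that the preparatory lemmas of Sections~\ref{secBC}--\ref{secNorm} were all formulated uniformly in the range $2\leq k<\la\leq\om$. With those tools in hand, the theorem reduces to the two-line descent above.
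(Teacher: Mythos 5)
Your proposal is correct and is essentially the paper's own proof: you use the same ordinal assignment $\bch{\good mi}{\bases{i+2}{\om}}$, factor it through the intermediate base via Corollary~\ref{corPreserve}, and obtain strict descent from Corollary~\ref{corMon}, exactly as in the paper. Nothing is missing; the remark about these corollaries holding uniformly up to $\la=\om$ is the right (and only) point to check.
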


\proof
Let $\big (\good mi \big )_{i<\al}$ be the fast Goodstein sequence starting on $m$. Let $i< \al$. Then,
\begin{align}
\nonumber  {\uparrow_{i+3}^\om }\good m{i+1}   & = {\uparrow_{i+3}^\om} ( \bch {\good mi }{\bases {i+2}{i+3}} - 1) \\
\label{itTermOne}&< {\uparrow_{i+3}^\om } \bch {\good mi}{\bases {i+2}{i+3}}  \\
\label{itTermTwo}& = {\uparrow_{i+2}^\om} \good mi ,
\end{align}
where \eqref{itTermOne} follows from Corollary \ref{corMon} and \eqref{itTermTwo} from Corollary \ref{corPreserve}.
Hence $\big (\uparrow_{i+2}^\om \good mi \big )_{i < \al}$ is a decreasing sequence of ordinals, so $\al$ must be finite.
\endproof

It is not needed to write numbers in normal form in order for the process to terminate.
Natural numbers may be represented using the $\A_k$ functions in various ways.
To make this precise, we build terms for numbers and ordinals out if this function.
Given fixed $k\geq 2$, the set of {\em $k$-terms} and {\em ordinal $k$-terms} are defined inductively as follows:
\begin{enumerate}

\item $0$ is both a $k$-term and an ordinal $k$-term.

\item If $t$ is an ordinal term, then $\A_k(t)$ is both a $k$-term and an ordinal $k$-term.

\item If $t,{s}$ are ordinal terms and $r$ is a number term, $\om^tr+{s}$ is an ordinal term.

\end{enumerate}

We remark that we use the same notation for function symbols and the functions they represent, but whether an expression should be treated as a term or as a number will always be made clear.
The set of $k$-terms will be denoted $\mathbb T_k$, and we set $\mathbb T=\bigcup_{k<\om}\mathbb T_k$.
The {\em value} of a term is defined inductively in the obvious way by $\val 0 = 0$, $\val{\A_k(t)} = \A_k(\val t)$, and $\val{\om^tr+{s}} = \om^{\val t}\val r+\val{s}$ (here, the left-hand side of the equality should be regarded as a term, the right hand as an ordinal).


The base change operator can be applied to arbitrary terms, even those not in normal form.
Given $k\leq \ell$ and $t \in \mathbb T _k$, we define $\bch t{ ^\ell} \in \mathbb T _\ell $ recursively by
\begin{enumerate}
\item $\bch 0{^\ell} = 0$,

\item $\bch {\A_k(t)}{^\ell} = \A_\ell(\bch t{^\ell})$,

\item  $\bch {(\om^tr+{s})}{^\ell} = \om^{\bch t{^\ell}}{\bch r{^\ell}}+{\bch {s}{^\ell}}$.
\end{enumerate}

Then, normal forms give maximal base change in the following sense.

\begin{proposition}\label{propAMax}
If $2\leq k<\ell<\om$ and $t$ is any $k$-term, then $\bch t {^\ell} \leq \bch{\val t}{\bases k\ell}$.
\end{proposition}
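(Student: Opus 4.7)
I plan to prove this by structural induction on the $k$-term $t$, with base-change maximality (Theorem \ref{theoMax}) as the central tool.

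The case $t = 0$ is immediate. For an ordinal term $t = \om^{t_1} r + s$, the inductive hypothesis applied to $t_1$, $r$, and $s$, combined with monotonicity of base change on ordinals (Corollary \ref{corMon} and Lemma \ref{lemmBchMonOrd}), yields
\[
\bch t{^\ell} = \om^{\bch{t_1}{^\ell}} \bch r{^\ell} + \bch s{^\ell} \leq \om^{\bch{\val{t_1}}{\bases k\ell}} \bch{\val r}{\bases k\ell} + \bch{\val s}{\bases k\ell}.
\]
The right-hand side is then bounded above by $\bch{\val t}{\bases k\ell}$: when $\om^{\val{t_1}}\val r + \val s$ is not already in $\omega$-normal form, coalescing terms to reach normal form can only increase the value after base change, again by monotonicity.

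The critical case is $t = \A_k(s)$. Let $m = \val t = \A_k(\val s)$ and write $m \gknf \A_k(\xi)$, so that $\bch m{\bases k\ell} = \A_\ell(\bch\xi{\bases k\ell})$. Applying Theorem \ref{theoMax} to $\A_k(\val s) \gknf \A_k(\xi)$ yields $\A_\ell(\bch{\val s}{\bases k\ell}) \leq \A_\ell(\bch\xi{\bases k\ell}) = \bch m{\bases k\ell}$. Combined with the inductive hypothesis $\bch s{^\ell} \leq \bch{\val s}{\bases k\ell}$, the proof reduces to establishing the single inequality $\A_\ell(\bch s{^\ell}) \leq \A_\ell(\bch{\val s}{\bases k\ell})$.

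The hard part will be exactly this last step, since $\A_\ell$ is not globally monotone by Corollary \ref{corBetweenSlow}. I plan to overcome it by strengthening the inductive hypothesis so that, in parallel with the value inequality, one also tracks a bound on $\mc{\bch s{^\ell}}$, specifically that it is no larger than $\A_\ell^{(k-1)}(\bch{\val s}{\bases k\ell})$; Lemma \ref{lemmMajorize} then converts the ordinal inequality into the desired inequality on $\A_\ell$-values (with equality when $\bch s{^\ell} = \bch{\val s}{\bases k\ell}$). Propagating this coefficient bound through the induction is routine: coefficients of $\bch s{^\ell}$ arise as base-changes of natural-number subterms of $s$ and are controlled by the inductive hypothesis applied to those subterms, together with Lemma \ref{lemmBoundTwo}.\ref{itBoundTwoOne}, which guarantees $\mc{\val s} < \A_k(\val s)$ and thus keeps the coefficient bound well below $\A_\ell^{(k-1)}(\bch{\val s}{\bases k\ell})$ after base change.
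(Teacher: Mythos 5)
Your reduction of the critical case $t=\A_k(s)$ to the single inequality $\A_\ell(\val{\bch s{^\ell}})\leq \A_\ell(\bch{\val s}{\bases k\ell})$ is fine (Theorem \ref{theoMax} is applied correctly there), and you are right that this is where the difficulty sits; but the step you call routine, namely propagating the coefficient bound $\mc{\val{\bch s{^\ell}}}\leq \A^{(k-1)}_\ell(\bch{\val s}{\bases k\ell})$ through the induction, is exactly where the argument breaks. The coefficients of $\val{\bch s{^\ell}}$ are not controlled by $\val s$ at all: in a term $\om^{t_1}r+s'$ the monomial $\om^{\val{t_1}}\val r$ may be absorbed in $\val s$ when $s'$ has a larger leading exponent, so $\val r$ leaves no trace in $\val s$ and hence none in $\bch{\val s}{\bases k\ell}$; yet term-level base change need not preserve strict inequalities between exponent values (for $k=2$, $\ell=3$ one has $\A_2(\om)=3<4$ while both lift to $\A_3(\om)=4=\A_3(3)$), so after lifting the absorbed monomial can coalesce with the leading one and its arbitrarily large coefficient reappears in $\val{\bch s{^\ell}}$, while the right-hand side of your bound is a quantity determined by $\val s$ alone.

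Concretely, take $k=2$, $\ell=3$, let $t_1$ be a term of value $3$ denoting $\A_2(\om)$, let $t'$ and $r$ be numeral terms (iterated $\A_2$ applied to $0$) of values $4$ and $j$ respectively, $r'=\A_2(0)$, and $s=\om^{t_1}r+\om^{t'}r'$. Then $\val s=\om^3 j+\om^4=\om^4$, so $m:=\val{\A_2(s)}=\A_2(\om^4)$; since $\mc{\om^4}=1$, every $\zeta>\om^4$ satisfies $\A_2(\zeta)>\A_2(\om^4)$, so $m$ is its own $2$-normal form and $\bch m{\bases 23}=\A_3(\om^{\bch 4{\bases 23}})=\A_3(\om^5)$. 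On the term side, $\val{\bch s{^3}}=\om^4 j+\om^4=\om^4(j+1)$, and once $j+1>\A^{(2)}_3(\om^5)$, Lemma \ref{lemmMajorize} together with $\A_3(\om^5)=\A_3\big(\om^4\cdot\A^{(2)}_3(\om^5)\big)$ (equation \eqref{eqRecurs}) gives $\A_3(\om^4(j+1))>\A_3(\om^5)$. So not only does your coefficient bound fail (its left side grows with $j$, its right side does not); the inequality you reduced the critical case to fails for such $s$, so no strengthening of the induction hypothesis along these lines can close the case $t=\A_k(s)$. Absorbed monomials require a separate idea (they cannot simply be deleted from $t$, since deletion changes $\bch t{^\ell}$), and note that the paper's own justification is only a one-line appeal to Theorem \ref{theoMax} plus induction on term complexity, which does not address this point either; a complete proof, or a reformulation of the statement, still has to be supplied here.
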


\begin{proof}
Follows from Theorem \ref{theoMax} using induction on term complexity. 
\end{proof}

With this, we may define Goodstein walks, in which natural numbers are written using {\em any} term.

\begin{definition}
A {\em fast Goodstein walk} is a sequence $(m_i)_{i=0}^\alpha$, where $\alpha \leq \omega$, such that for every $i<\alpha$, there is an $(i+2)$-term $t _i$ with $\val{t_i} = m_i$ and $m_{i+1} = \bch {t_i} {^{i+3}} - 1$.
\end{definition}

\begin{theorem}\label{theoWalk}
Every fast Goodstein walk is finite.
\end{theorem}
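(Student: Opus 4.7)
The plan is to bound an arbitrary fast Goodstein walk from above, step by step, by the Fast Goodstein sequence $\big(\good{m}{i}\big)_i$ starting at the same initial value, and then invoke Theorem \ref{theoGood}. The comparison rests on two ingredients already established: Proposition \ref{propAMax}, which says that no term representation of a number can exceed its normal form after a base change, and Corollary \ref{corMon}, which says that the base change operator is strictly monotone on $\mathbb N$.

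Concretely, given a walk $(m_i)_{i < \alpha}$ with $m_0 = m$, I would prove by induction on $i$ that $m_i \leq \good{m}{i}$. The base case is immediate. For the inductive step, assuming $m_i \leq \good{m}{i}$ and $m_i > 0$, the walk supplies an $(i+2)$-term $t_i$ with $\val{t_i} = m_i$ and $m_{i+1} = \bch{t_i}{^{i+3}} - 1$. Proposition \ref{propAMax} yields the term-level bound
\[
\bch{t_i}{^{i+3}} \leq \bch{m_i}{\bases{i+2}{i+3}},
\]
while Corollary \ref{corMon} applied to the inductive hypothesis gives
\[
\bch{m_i}{\bases{i+2}{i+3}} \leq \bch{\good{m}{i}}{\bases{i+2}{i+3}}.
\]
Chaining these and subtracting $1$ from both sides produces $m_{i+1} \leq \good{m}{i+1}$, closing the induction.

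Theorem \ref{theoGood} then supplies a finite $N$ with $\good{m}{N} = 0$, whence $m_N \leq 0$ forces $m_N = 0$; the walk therefore terminates at or before step $N$.

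The main conceptual obstacle was already dispatched in the previous sections: base-change maximality (Theorem \ref{theoMax}, and its term-level consequence Proposition \ref{propAMax}) is what prevents a wasteful choice of term $t_i$ from blowing up beyond the normal-form base change and thereby allowing the walk to outpace the normal-form process. With that in hand, plus monotonicity, the walk theorem reduces to a short monotonicity chase on top of Theorem \ref{theoGood}, and no new ideas are needed.
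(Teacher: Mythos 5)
Your proposal is correct and follows essentially the same route as the paper: an induction on $i$ showing $m_i \leq \good mi$, using Proposition \ref{propAMax} for the term-level bound and Corollary \ref{corMon} for monotonicity, then invoking Theorem \ref{theoGood} to terminate the dominating normal-form sequence. No further comment is needed.
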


\proof
Let $(m_i)_{i=0}^\alpha$ be a Goodstein walk for $\mathcal F$.
Let $m = m_0$.
By induction on $i$, we check that $m_i \leq \good mi$.
For the base case this is clear. Otherwise, $m_{i+1} = \val{\bch{t_i}{^{i+3}}} - 1$ for some $(i+2)$-term $t_i$, and thus
\[m_{i+1} = \val{\bch{t_i}{^{i+3}}} - 1 \leq \bch{m_i}{\bases{i+2}{i+3}} - 1 \stackrel{\text{\sc ih}}\leq \bch{\good m{i}}{\bases{i+2}{i+3}} - 1 = \good m{i+1} ,\]
where the second inequality uses Corollary \ref{corMon} along with the induction hypothesis for $i$.
Thus if we choose $i$ such that $\good mi =0 $, we must have $\alpha \leq i $.
\endproof

\begin{example}\label{exAlt}
Consider alternative normal forms obtained by writing $m \simeq_k \A_k (\xi)$, where $\xi$ is maximal so that $\A(\xi) = m $.
Such normal forms give alternative Goodstein sequences, which are terminating by Theorem \ref{theoWalk}.
\end{example}

\section{Fundamental sequences for the Bach\-mann-Ho\-ward ordinal}\label{secThetaFun}

The strategy for showing that Theorem \ref{theoGood} is not provable in $\sf KP$ is to compare it to the process of descending along fundamental sequences for $\B(\ve_{\Om+1})$, whose termination is already known not to be provable \cite{Eguchi,FWTheta}
In the remainder of this section, we write $\B=\A_\om$.
The fundamental sequences we will use are based on those defined by Buchholz for the $\vartheta$ function~\cite{BuchholzOrd}.
We have shown that very similar fundamental sequences can be defined for $\B$ (also denoted $\sigma$), and that the two functions coincide for $\xi\geq \Om^2\cdot\om$~\cite{FWTheta}.
In particular, we have that $\vartheta(\ve_{\Om+1}) = \B(\ve_{\Om+1} )$, the {\em Bachmann-Howard ordinal.}\footnote{Note that $\ve_{\Om+1}$ is not officially in the domain of $\B$, but we may define $  \B(\ve_{\Om+1} ) = \lim_{n\to\infty}  \B(\Om_n ) $.}

We need some auxiliary definitions before giving the fundamental se\-quen\-ces for $\B$.
We will use the function $\tau_\Om(\xi)$ defined in Section \ref{secOrd}, and write $\tau$ for $\tau_\Om$.

The fundamental sequences for the $\B$ function require a case distinction depending on whether the value of $\B(\xi)$ has a `jump' at $\xi$; it could be either that $\xi$ is a limit, or that $\xi$ is a limit but it is not the case that $\B(\fs\xi{\tau_n}) \to \B(\xi) $ as $\tau_n\to \tau$.
This occurs when the following holds (see \cite{BuchholzOrd,FWTheta}).

\begin{definition}
We define sets
\begin{enumerate}
\item 
${\rm FIX}  = \{\xi<\ve_{\Om+1}: \fs\xi 1^*<\xi^* =\tau(\xi) = \B_\mathbb X(\ga)\text{ for some $\ga>\xi$}\}
$, and

\item 
${\rm JUMP}  = \{0\}\cup {\rm Succ} \cup {\rm FIX} $.

\end{enumerate}
\end{definition}

In order to ensure that the fundamental sequences converge in such cases, we need to define an auxiliary value, essentially equivalent to Buchholz's $\vartheta^*$.

\begin{definition}
For $\xi<\ve_{\Om+1}$, we set
\[
\B^{\{0\}} (\xi) =
\begin{cases}
\B(\zeta)&\text{if $\xi =\zeta+1$}\\
 \tau(\xi)&\text{if $\xi\in {   \rm FIX} $}\\
 0&\text{otherwise.}\\

\end{cases} 
 \]
If $\xi=\al+\be$ with $\al=\Om\tilde\al$, we define $\B^{\{i+1\}}(\xi)$ recursively by
\[\B^{\{i+1\}}(\xi) = \B\big ( \fs\al{\B^{\{i\}}(\xi)} \big ),\]
and we set
\[
\check \xi=
\begin{cases}
 \al&\text{if $\B^{\{0\}}(\xi) >0$}\\
\xi&\text{otherwise.}
\end{cases}
\]
\end{definition}

With this, we may now define the fundamental sequences we will use.
It will be convenient to define fundamental sequences for some uncountable ordinals, and thus the domain of our fundamental sequences will be $\Lambda\times\mathbb N$, where $\B(\ve_{\Om+1}) \subsetneq\Lambda\subsetneq \ve_{\Om+1}$ is as specified below.

\begin{definition}
Let $\xi<\B(\ve_{\Om+1})$, $n<\om$, and define
\[\Lambda = \{\xi <\ve_{\Om+1} : \mck\Om\xi<\B(\ve_{\Om+1}) \text{ and } \tau(\xi)<\Om\} .\]
We define $\cfs\cdot\cdot\colon \Lambda \times\mathbb N \to \B(\ve_{\Om+1})$ by:
\begin{enumerate}

\item $\cfs 0n = 0$ for all $n$.

\item If $\be<\Om$, then 
$\cfs{\B( \be) } n  = \be$.

\item

If $\xi = \fs\al\tau \geq \Om$, then $\cfs\xi n = \fs\al{\cfs \tau n}$.

\item If $0<\tau(\check \xi)<\Om$, then
$\cfs{\B(\xi)}n  = \B \big ( \cfs{\check \xi} n +\B^{\{0\}}(\xi) \big )$.

\item If $\tau(\check \xi)=\Om$, then
$
\cfs{\B(\xi)}n=
\B^ {\{n \}} (\xi).
$

\end{enumerate}
\end{definition}

Recall from the introduction that $\sf KP$ is a restriction of $\sf ZFC$ with proof-theoretic ordinal $\B(\ve_{\Om+1} )$.
This ordinal can be used to bound the provably total computable functions of $\sf KP$.

\begin{definition}\label{defFFun}
For $i<\om$ and $\al<\B(\ve_{\Om+1})$, define $\fsi\al i$ recursively by $\fsi\al 0 = \al $ and $\fsi\al {i+1} = \cfs {\fsi \al i}{i+1} $.
Define $F(n)$ to be the least $\ell$ such that
$\fsi{\B(\Om_n)} \ell = 0$.
\end{definition}

The function $F$ is total since $ \fsi{\B(\Om_n)} {i+1} < \fsi{\B(\Om_n)} i$ whenever the right-hand side is not zero, but totality is not provable in $\sf KP$.
In fact the following, more general, claim holds; it is a special case of a general principle of Cichon et al.~\cite{BCW} adapted to our system of fundamental sequences~\cite{FWTheta}.

\begin{theorem}\label{theoKPInc}
Let $\varphi$ be $\Sigma_1^0$ formula and suppose that ${\sf KP}\vdash \forall x\exists y \varphi(x,y) $.
Let $f_\varphi(n)$ be the least $m$ such that $\varphi(n,m)$.
Then, $\exists m \ \forall n>m \ \big ( f_\varphi(n) < F(n) \big)$.
\end{theorem}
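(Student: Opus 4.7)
The plan is to defer the bulk of the argument to the ordinal analysis of $\sf KP$ together with a general domination theorem for systems of fundamental sequences, and then verify that the specific sequences $\cfs\cdot\cdot$ defined in this section meet the required regularity conditions. The result is stated as a special case of the Cichon--Buchholz--Weiermann principle, so the genuine mathematical content was established in \cite{BCW,FWTheta}; what remains is to explain how to apply it to the present setup.

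The first step is to invoke the ordinal analysis of $\sf KP$. By classical proof theory (Jäger--Pohlers; see~\cite{Pohlers:2009:PTBook}), any $\Sigma_1^0$ formula $\forall x\exists y\,\varphi(x,y)$ provable in $\sf KP$ admits, for each numeral $\bar n$, a cut-free derivation in an infinitary system (e.g.~Buchholz's $\Omega$-rule calculus, or equivalently a $\sf KP$-tailored ramified system) of an existential witness whose height is bounded by $\B(\Om_{h(n)})$ for some primitive recursive $h$. Combined with a bounding lemma for cut-free derivations, this yields $f_\varphi(n)\leq G\bigl(\B(\Om_{h(n)})\bigr)$ for a suitable hierarchy $G$ defined along any reasonable system of fundamental sequences for $\B(\ve_{\Om+1})$.

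The second step is to match the internal hierarchy $G$ used in step one with the concrete function $F$ of Definition~\ref{defFFun}. Here one applies the general principle of~\cite{BCW}: if a system of fundamental sequences is \emph{norm-compatible} (each step decreases a natural complexity measure bounded in terms of the input $n$) and satisfies the Bachmann property, then the slow-growing/fast-growing collapse identifies $G$ with the iterated descent $\fsi{\cdot}{i}$ used to define $F$. For the system $\cfs\cdot\cdot$, the Bachmann property for the underlying $\omega$-normal-form sequences was recorded in Section~\ref{secOrd}, and its lifting to $\B$ through the clauses for $\mathrm{FIX}$ and $\mathrm{JUMP}$ was carried out in~\cite{FWTheta}; norm-compatibility is immediate from the clauses of $\cfs\cdot n$, since passing from $\B(\xi)$ to $\B^{\{n\}}(\xi)$ or to $\B(\cfs{\check\xi}n+\B^{\{0\}}(\xi))$ reduces the $\mck\Om\cdot$-norm or the height. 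Combining this matching with the estimate from step one gives $f_\varphi(n)<F(n)$ for all sufficiently large $n$.

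The main obstacle, and really the only nontrivial point, is verifying that our fundamental sequences agree, up to the equivalence used in~\cite{BCW}, with Buchholz's~\cite{BuchholzOrd} $\vartheta$-based sequences on the segment of ordinals relevant to the bounding argument. This is precisely the content of the comparison established in~\cite{FWTheta}, where $\B$ and $\vartheta$ are shown to coincide for $\xi\geq \Om^2\cdot\omega$ and the two systems of fundamental sequences are shown to be cofinally equivalent. Granting that comparison, the theorem follows by quoting the Cichon--Buchholz--Weiermann domination result.
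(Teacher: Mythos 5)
Your proposal takes essentially the same approach as the paper: the paper offers no self-contained proof of Theorem~\ref{theoKPInc}, presenting it precisely as a special case of the general principle of Cichon et al.~\cite{BCW} adapted to the present system of fundamental sequences via~\cite{FWTheta}, which are exactly the two references your argument rests on. Your additional sketch of the infinitary cut-elimination, bounding lemma, and Bachmann-property/norm considerations is a reasonable unpacking of what those citations provide, so there is nothing to correct.
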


In order to compare $F(n)$ to the length of our fast Goodstein walks, the following property will be useful.
It holds in general for any system of fundamental sequences with the Bach\-mann property \cite{FernandezWCiE}.

\begin{proposition}\label{propMajorize}
Let $(\xi_n)_{n\in \mathbb N}$ be a sequence of ordinals below $\B(\ve_{\Om+1})$ such that, for all $n$, $\cfs{\xi_n}{n+1} \leq \xi_{n+1} \leq \xi_n$.
Then, for all $n$, $\xi_n \geq \fsi\xi n$.
\end{proposition}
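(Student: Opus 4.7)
The plan is to prove the inequality by induction on $n$, using the Bachmann property together with strict monotonicity of the canonical trajectory $\big(\fsi{\xi_0}{i}\big)_i$ and monotonicity of $\cfs\cdot\cdot$ in its second argument. Throughout I read the statement as asserting $\fsi{\xi_0}{n}\leq\xi_n$ (the $\xi$ in $\fsi\xi n$ being shorthand for $\xi_0$). The base case $n=0$ is trivial since $\fsi{\xi_0}{0}=\xi_0$ by definition.

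For the inductive step, assume $\fsi{\xi_0}{n}\leq\xi_n$. The sequence $\big(\fsi{\xi_0}{i}\big)_{i\leq n}$ is strictly decreasing (each step applies $\cfs\cdot\cdot$ to a positive ordinal), and $\xi_n\leq\xi_0=\fsi{\xi_0}{0}$, so I can pick the largest $k\in\{0,\ldots,n\}$ with $\xi_n\leq\fsi{\xi_0}{k}$. The proof then splits into three cases.

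If $k=n$, the induction hypothesis forces $\xi_n=\fsi{\xi_0}{n}$, and the hypothesis on the sequence gives $\xi_{n+1}\geq \cfs{\xi_n}{n+1}=\cfs{\fsi{\xi_0}{n}}{n+1}=\fsi{\xi_0}{n+1}$. If $k<n$ then $\fsi{\xi_0}{k+1}<\xi_n\leq\fsi{\xi_0}{k}$ by maximality of $k$. When $\xi_n<\fsi{\xi_0}{k}$, I apply the Bachmann property with $\alpha=\fsi{\xi_0}{k}$ and $\beta=\xi_n$ (so $\alpha[k+1]=\fsi{\xi_0}{k+1}<\beta<\alpha$) to obtain $\fsi{\xi_0}{k+1}\leq\cfs{\xi_n}{1}\leq\cfs{\xi_n}{n+1}\leq\xi_{n+1}$; since $\big(\fsi{\xi_0}{i}\big)_i$ is decreasing and $k+1\leq n+1$, this gives $\fsi{\xi_0}{n+1}\leq\fsi{\xi_0}{k+1}\leq\xi_{n+1}$. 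When $\xi_n=\fsi{\xi_0}{k}$, I use monotonicity of $\cfs\cdot\cdot$ in the second argument: since $n+1>k+1$, $\cfs{\xi_n}{n+1}=\cfs{\fsi{\xi_0}{k}}{n+1}\geq\cfs{\fsi{\xi_0}{k}}{k+1}=\fsi{\xi_0}{k+1}\geq\fsi{\xi_0}{n+1}$, and again $\xi_{n+1}\geq\cfs{\xi_n}{n+1}$ finishes the case.

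The main obstacle is the sub-case where $\xi_n$ does not coincide with $\fsi{\xi_0}{n}$; one cannot hope that $\cfs\cdot\cdot$ is monotone in the first argument, so a direct comparison of $\cfs{\fsi{\xi_0}{n}}{n+1}$ and $\cfs{\xi_n}{n+1}$ is unavailable. The trick is to relocate $\xi_n$ along the canonical trajectory by choosing the best index $k$, so that the Bachmann property produces a majorization one level up at index $k+1$, and then to cover the boundary case $\xi_n=\fsi{\xi_0}{k}$ separately via monotonicity in the argument of the fundamental sequence. This bookkeeping is the only genuinely delicate point of the argument.
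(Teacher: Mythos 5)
Your proof is correct, and in fact it supplies an argument that the paper itself omits: Proposition \ref{propMajorize} is only quoted as a general fact about systems of fundamental sequences with the Bachmann property, with a pointer to \cite{FernandezWCiE}, so there is no in-paper proof to compare against. Your reading of the statement (the $\xi$ in $\fsi\xi n$ being $\xi_0$) is the intended one, and your induction, with the key move of locating $\xi_n$ in the canonical trajectory---taking the largest $k\leq n$ with $\xi_n\leq \fsi{\xi_0}k$ and, when $\fsi{\xi_0}{k+1}<\xi_n<\fsi{\xi_0}k$, applying the Bachmann property to get $\fsi{\xi_0}{k+1}\leq \cfs{\xi_n}1\leq\cfs{\xi_n}{n+1}\leq\xi_{n+1}$---is exactly the standard route to this majorization lemma, and it correctly avoids the non-existent monotonicity of $\cfs\cdot\cdot$ in its first argument. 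Every ingredient you invoke is available in the paper's abstract framework: $\cfs\alpha n\leq\alpha$, monotonicity of $\cfs\alpha{\cdot}$ in the index, and the Bachmann property. Two cosmetic caveats: the parenthetical claim that $\big(\fsi{\xi_0}i\big)_{i\leq n}$ is strictly decreasing fails once the trajectory reaches $0$, and for the concrete system index-monotonicity is only weak (e.g.\ $\cfs{\B(\be)}n=\be$ is constant in $n$); but your argument nowhere needs strictness---existence of $k$ only uses $\xi_n\leq\xi_0$, and all comparisons you draw are non-strict---so the proof stands as written.
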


Proposition \ref{propMajorize} thus allows us to compare sequences of ordinals globally by considering only their local behavior.

\section{Independence}\label{secInd}

Our strategy to prove that Theorems \ref{theoGood} and \ref{theoWalk} are not provable in $\sf KP$ is to show that they grow at least as fast as the function $F$ of Definition \ref{defFFun}.
By Proposition \ref{propMajorize}, it suffices to show that $\cfs{\bch{m}{\bases {k+2}\om}}k \leq \bch{(m-1)}{\bases {k+2}\om}$ for all $k$, as this will allow us to conclude that $\fsi{\bch{m}{\bases { 2}\om}  } k \leq   \bch{\good mk }{\bases{k+2}\om}  $.
By choosing suitable $m = m(n)$, this will show that the termination time for the fast Goodstein process is bounded below by $F(n)$.
We begin with a useful lemma comparing the auxiliary value $\B^{\{0\}} (\chg \xi)$ to $\chg \A( \xi - 1)$.
As in the previous section, we write $\tau$ for $\tau_\Om$.

\begin{lemma}\label{lemmThetaStar}
Let $k\geq 2 $ and write $\uparrow$ for $\uparrow_k^\om$, $\A=\A_k$, and $\B=\A_\om$.
Then, for every $\xi<\ve_0$,
\[\B^{\{0\}} (\chg \xi) \leq \chg \A( \xi - 1).\]
\end{lemma}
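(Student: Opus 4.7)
The plan is to prove the lemma by induction on $\xi < \varepsilon_0$, following the three-way case split in the definition of $\B^{\{0\}}(\chg\xi)$. The base case $\xi = 0$ gives $\chg\xi = 0$ with both sides equal to $0$, and more generally whenever $\chg\xi$ is neither a successor nor in $\mathrm{FIX}$ we have $\B^{\{0\}}(\chg\xi) = 0$, and the inequality is immediate.

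For the successor clause $\chg\xi = \zeta + 1$, we want $\A_\om(\zeta) \leq \chg\A(\xi-1)$. The strategy is to locate the source of the successor structure of $\chg\xi$ by unwinding the recursive definition of base change: since $\chg(\om^\al b + \ga) = \Om^{\chg\al}\chg b + \chg\ga$, the successorhood of $\chg\xi$ is inherited from the innermost level of the recursion, where a natural number appears whose base change (computed via its normal-form sequence) happens to be a successor. Tracing this back to produce an $\xi_0 \leq \xi$ with $\chg{\xi_0} \geq \zeta$, we combine base-change maximality (Theorem~\ref{theoMax}) with monotonicity (Corollary~\ref{corMon}) to conclude $\A_\om(\zeta) \leq \A_\om(\chg{\xi_0}) \leq \chg\A_k(\xi_0) \leq \chg\A(\xi-1)$, with the final step justified by whether $\xi$ is itself a successor (direct) or a limit (so that $\A(\xi-1) = \mc\xi$ absorbs $\A_k(\xi_0)$).

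For the FIX clause, $\B^{\{0\}}(\chg\xi) = \tau(\chg\xi)$, and the definition of $\mathrm{FIX}$ forces $\tau(\chg\xi)$ to be a value of $\B$, so in particular $\tau(\chg\xi) < \Om$. Since elements of $\mathrm{FIX}$ are limits, $\xi$ is a limit as well and $\chg\A(\xi-1) = \chg\mc\xi$. A secondary induction on the $\omega$-normal form of $\xi$, writing $\chg\xi = \Om^{\chg\al}\chg b + \chg\ga$, shows $\tau(\chg\xi) \leq \chg\mc\xi$: when $\chg\ga > 0$, $\tau(\chg\xi) = \tau(\chg\ga)$ is handled by induction on $\ga$; when $\chg\ga = 0$ and $\chg b$ is a limit, $\tau(\chg\xi) = \chg b \leq \chg\mc\xi$ by monotonicity since $b \leq \mc\xi$; when $\chg\ga = 0$, $\chg b$ is a successor, and $\chg\al$ is a limit, $\tau(\chg\xi) = \tau(\chg\al) \leq \chg\mc\al \leq \chg\mc\xi$ by induction on $\al$; and the final possibility $\chg\ga = 0$ with both $\chg b$ and $\chg\al$ successors is ruled out because it would force $\tau(\chg\xi) = \Om$, contradicting the assumption $\tau(\chg\xi) < \Om$.

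The main obstacle is the successor case. The subtlety is that the correspondence between $\chg\xi$ being a successor and $\xi$ being a successor is not straightforward: the base change of a natural number, computed through its normal-form sequence, can easily produce a limit ordinal (for instance $\chg{3} = \A_\om(\Om)$ depending on the normal form), and conversely nontrivial successor structure of $\chg\xi$ can appear even when $\xi$ itself is a limit. Identifying the correct witness $\xi_0$ therefore requires a careful structural analysis of the innermost recursion in the definition of $\chg\xi$, which is the heart of the argument.
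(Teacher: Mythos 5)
Your case split follows the definition of $\B^{\{0\}}$, and your trivial and FIX cases are in the right spirit, but the successor case---which you yourself flag as ``the heart of the argument''---is left as an unexecuted plan, and the ingredients you propose for it do not work. First, your structural premise is false: $\chg\xi$ cannot be a successor when $\xi$ is a limit. Base change sends $\om^\al b+\ga$ to $\Om^{\chg\al}\chg b+\chg\ga$, so successorhood of $\chg\xi$ is decided by its last additive summand; any summand $\Om^{\chg\al}\chg b$ with $\al>0$ is a limit (since $\al\leq\chg\al$), so the last summand of $\xi$ must be a finite $b>0$, and $\chg b$ is a successor only when $b\gknf\A(b-1)$ (because $\A_\om(\eta)$ is a limit for every infinite $\eta$), in which case $\chg b=\chg{(b-1)}+1$. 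Hence $\chg\xi$ a successor forces $\xi$ to be a successor with $\chg\xi-1=\chg{(\xi-1)}$, and the whole case is one line: $\B^{\{0\}}(\chg\xi)=\B(\chg\xi-1)=\B(\chg{(\xi-1)})\leq\chg\A(\xi-1)$ by Theorem~\ref{theoMax}. This is exactly the paper's argument. Second, even granting your ``traced-back'' $\xi_0\leq\xi$ with $\chg{\xi_0}\geq\zeta$, the step $\A_\om(\zeta)\leq\A_\om(\chg{\xi_0})$ is unjustified: $\A_\om$ is not monotone, and by Proposition~\ref{propAOrder} (or Lemma~\ref{lemmMajorize}) you would additionally need $\mck\Om\zeta<\A_\om(\chg{\xi_0})$, which you never verify; likewise the ``absorption'' $\A_k(\xi_0)\leq\mc\xi$ in your (in fact vacuous) limit subcase is not argued.

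In the FIX case your inequality $\tau(\chg\xi)\leq\chg{\mc\xi}$ is essentially the paper's argument ($\tau(\chg\xi)\leq\mck\Om{\chg\xi}$ when $\tau(\chg\xi)<\Om$), but the justification ``elements of ${\rm FIX}$ are limits, so $\xi$ is a limit and $\chg\A(\xi-1)=\chg{\mc\xi}$'' is wrong: a successor $\xi$ can have $\chg\xi$ a limit (your own example $\chg 3=\A_\om(\Om)$), so the asserted equality can fail. The conclusion survives only because $\mc\xi\leq\A(\xi-1)$ (Lemma~\ref{lemmBoundTwo}) and base change is monotone (Corollary~\ref{corMon}), giving $\chg{\mc\xi}\leq\chg\A(\xi-1)$; you should state this bridge rather than the false equality. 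With these repairs your proof collapses to the paper's three-case argument, so the issue is not a different route but missing justification at the two places where the lemma actually has content.
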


\begin{proof}
Consider the following cases.
\begincases

\item ($\chg \xi$ is a successor).
Then, $\chg \xi -1=\chg{(\xi-1)}$.
By Theorem \ref{theoMax}, we see that
$\chg \A(\xi-1)  \geq \B \big (\chg { \xi } - 1 \big ) = \B^{\{0\}} (\chg \xi)$.

\item ($\B^{\{0\}}(\chg\xi) = \tau(\chg\xi)$).
Lemma \ref{lemmBoundTwo} yields $\mc\xi<\A(\xi-1)$, so by Corollary \ref{corMon}, $ \tau(\chg\xi) \leq  \mck\Om{\chg \xi} < \chg \A(\xi-1)$.

\item (other cases).
Then $\B^{\{0\}}(\chg\xi) = 0$, and the claim is trivially true.\qedhere
\end{enumerate}
\end{proof}

We need one more preliminary lemma involving fundamental sequences.
Since we will use this in the proof by induction that $\cfs{\bch{m}{\bases {k+2}\om}}k \leq \bch{(m-1)}{\bases {k+2}\om}$ for all $m$ and $k$, we may assume that this inequality already holds below $\mc\xi$.
Once we have proven Proposition \ref{propMinvsFS} below, this assumption may be dropped.

\begin{lemma}\label{lemmClaim}
Let $k<\om$ and $\xi = \om\tilde\xi  $ with $0<\tilde\xi<\ve_0$.
Write $\chg{}$ for $\bch{ }{\bases {k+2}\om}$, $\A$ for $\A_k$, and let $\tau = \tau(\chg\xi)$, $\theta = \B^{\{0\}}(\chg \xi) $ and $A =   \A(\xi -1)$.

Suppose that for all $m\leq \mc{\xi}$, we have that $\cfs{\chg{m}}k \leq \bch{(m-1)}{\bases {k+2}\om}$.
Then,
\[\fs{\chg\xi} {\cfs \tau k}_\Om +\theta \leq \chg {\fs{\xi}A  }.\]
\end{lemma}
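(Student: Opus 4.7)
The plan is to prove this inequality by induction on $\xi$, with a detailed case analysis on the $\omega$-normal form of $\tilde\xi$ and on the terminal part $\tau(\chg\xi)$. Throughout, I would keep track of how the constituents $\tau$, $\theta$, and $A = \mc\xi$ evolve under the base change operator.

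First I would note some structural facts that sharpen the hypothesis. Since $\xi = \om\tilde\xi$ is a multiple of $\om$, its image $\chg\xi$ is a multiple of $\Om$, hence never a successor. Combined with the definition of $\text{FIX}$, which requires a countable terminal part, this forces $\theta = \B^{\{0\}}(\chg\xi) = 0$ whenever $\tau = \Om$, and $\theta \in \{0, \tau\}$ otherwise. Moreover, Lemma \ref{lemmThetaStar} gives the global bound $\theta \leq \chg A$. These observations let me separate the contribution of $\theta$ from the fundamental sequence computation.

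The induction proceeds by cases on the $\om$-normal form $\tilde\xi = \om^\zeta c + \eta$ (equivalently, on the tail of $\xi$):
\begin{itemize}
\item \textbf{Nontrivial tail} ($\eta > 0$, so $\xi = \om^{\zeta+1}c + \om\eta$). Here $\fs\xi A = \om^{\zeta+1}c + \fs{\om\eta}A$ and $\tau(\chg\xi)$, $\B^{\{0\}}(\chg\xi)$ are inherited from $\chg{\om\eta}$. The result reduces to the inductive hypothesis applied to $\om\eta$, with the leading block $\om^{\zeta+1}c$ contributed identically on both sides.
\item \textbf{Coefficient case} ($\xi = \om^\zeta(c+1)$ with $c > 0$, and $\zeta$ itself ending in a successor or a nontrivial coefficient). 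Here $\fs\xi A = \om^\zeta c + \fs{\om^\zeta}A$ and we apply the inductive hypothesis to $\om^\zeta$ together with Lemma \ref{lemmOrdBch} to pass the base change through the fundamental sequence.
\item \textbf{Successor exponent} ($\xi = \om^{\zeta+1}$). Then $\chg\xi = \Om^{\chg\zeta+1}$, $\tau = \Om$, $\theta = 0$. By Lemma \ref{lemmUpSame}, $\chg{\fs\xi A} = \fs{\chg\xi}{\chg A}_\Om = \Om^{\chg\zeta}\chg A$, and the inequality reduces to $\cfs\Om k \leq \chg A$, which follows from the hypothesis applied at $m = A = \mc\xi$.
\item \textbf{Limit exponent} ($\xi = \om^\zeta$, $\zeta$ a limit). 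Then $\fs\xi A = \om^{\fs\zeta A}$ and $\chg\xi = \Om^{\chg\zeta}$. We recurse on $\zeta$, noting that $\tau(\chg\xi) = \tau(\chg\zeta)$ and $\B^{\{0\}}(\chg\xi) = \B^{\{0\}}(\chg\zeta)$ in an appropriate sense.
\end{itemize}

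The main obstacle is the case where $\chg\xi$ sits in $\text{FIX}$, since there $\theta$ is genuinely positive and must be absorbed on the right-hand side without slack. The strategy is to invoke Lemma \ref{lemmThetaStar} to get $\theta \leq \chg A$, combined with the hypothesis on $\cfs{\chg m}k \leq \chg{(m-1)}$ at $m = A$ to ensure that the fundamental-sequence index $\cfs\tau k$ lies at most $\chg A - \theta - 1$ or the analogous sharp bound. A second delicate point is the passage between the countable and uncountable regimes: Lemma \ref{lemmUpSame} gives an exact equality $\chg{\fs\xi A} = \fs{\chg\xi}{\chg A}_\Om$ precisely when $\tau = \Om$, and this equality is what allows the otherwise loose inequality of Lemma \ref{lemmOrdBch} to be tight enough to accommodate the $+\theta$ summand on the left.
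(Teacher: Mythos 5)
Your outer structure (induction on $\xi$, the nontrivial-tail case handled by the induction hypothesis, $\theta$ controlled via Lemma \ref{lemmThetaStar}) matches the paper's opening moves, but your case split is made on the wrong side of the base change, and two of your four cases break down. The shape of $\fs{\chg\xi}{\cfs\tau k}_\Om$ is governed by where the terminal part of $\chg\xi$ comes from, and this is \emph{not} determined by the $\om$-normal form of $\xi$: natural-number coefficients and finite exponent tails of $\xi$ can lift to limit ordinals under $\chg{}$. In particular your successor-exponent case is wrong: for $\xi=\om^{\zeta+1}$ you claim $\chg\xi=\Om^{\chg\zeta+1}$, but $\chg{(\zeta+1)}\neq\chg\zeta+1$ whenever $\zeta$ has a finite tail $n$ with $n+1\gknf\A(\mu)$ for infinite $\mu$ (e.g.\ $3\gknf\A_2(\om)$, so $\chg 3=\A_\om(\Om)$ is a limit); then $\tau(\chg\xi)$ is that countable limit, not $\Om$, and $\theta$ need not vanish. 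Moreover, when $\tau$ genuinely equals $\Om$ the expression $\cfs\Om k$ is undefined ($\Om\notin\Lambda$ because $\tau(\Om)=\Om$), so ``the inequality reduces to $\cfs\Om k\leq\chg A$'' is not meaningful; in the paper that regime is simply outside this lemma and is handled separately in Proposition \ref{propMinvsFS} via the $\B^{\{i\}}$ iteration and Lemma \ref{lemmUpSame}.

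The cases carrying the real content are exactly the ones your split omits: (i) $\xi=\om^\al c$ with $\chg c$ a limit, where $\tau=\chg c$, the fundamental sequence of $\chg\xi$ descends \emph{inside the coefficient}, and you must invoke the hypothesis at $m=c$ to get $\cfs{\chg c}k\leq\chg{(c-1)}$; and (ii) $\chg c$ a successor with exponent $\al=\om\al'+t$, $t>0$ finite, where $\tau=\chg t$ and the hypothesis is invoked at $m=t$. Your proposal only ever uses the hypothesis at $m=\mc\xi$, which does not reach these cases; using it at the coefficients is the whole point of assuming it for all $m\leq\mc\xi$. Finally, Lemma \ref{lemmOrdBch} gives the upper bound $\chg{\fs\al n}\leq\fs{\chg\al}{\chg n}$, whereas here you need a \emph{lower} bound on $\chg{\fs\xi A}$, so ``passing the base change through the fundamental sequence'' with that lemma points the wrong way; the paper instead computes $\chg{\fs\xi A}$ explicitly in each case and absorbs the $+\theta$ summand into an exponent using Lemma \ref{lemOrdIneq}, with $\theta\leq\chg A$ supplied by Lemma \ref{lemmThetaStar}.
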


\begin{proof}
Write $\xi=\om^{\al}c +\be$ in $\omega$-normal form and consider the following cases.
\begincases
\item ($\beta >0$). Then we have inductively that

\begin{align*}
\fs{\chg\xi} {\cfs\tau k}_\Om+\theta &= {\Om^{\chg{\al}}\chg c} + \fs{\chg{\be}} {\cfs\tau k}_\Om +\theta  \\
&\leq {\Om^{\chg{\al}}\chg c} + \chg {\fs{\be}A  } = \chg {\fs{\xi}A  }.
\end{align*}

\item ($\be = 0$).
Consider the following sub-cases.
\beginsubcases

\item ($\chg c \in {\rm Lim} $).
Then, $\chg c=\tau$ by the definition of $\tau$, so that the assumption yields $\cfs{\tau}k \leq \bch{(c-1)}{\bases {k+2}\om}$. 
Moreover, $  {\fs{\xi}A  }  =  \om^{\al}(c-1)+ \delta $ for some $\delta$ with $\mc\delta \geq A$.
Note that Lemma \ref{lemmThetaStar} yields $\chg A \geq \theta$, so that also $\chg\delta\geq\theta$.
From this we see that
\begin{align*}
\chg {\fs{\xi}A  } & = \chg{(\om^{\al}(c-1)+ \delta )} = \Om^{\chg{\al}} \chg {(c-1)}+\chg \delta\\
& \geq  \Om^{\chg{\al}} \cfs\tau k+ \theta = \fs{\chg{\xi}} {\cfs\tau k}_\Om +\theta.
\end{align*}

\item ($\chg c \in {\rm Succ} $).
Write $\chg(c-1)=\eta$, so that $\chg c=\eta+1$.
Here we consider further sub-cases according to $\al$.
\beginsubcases
\item ($\al=\om\al'$).
Then, $\tau(\chg \xi) =\tau(\chg \al)$, so the induction hypothesis and Lemma \ref{lemOrdIneq} yield
\begin{align*}
\fs{\chg{\om^{\al}}c} {\cfs\tau k}_\Om+\theta & = \chg{\om^{\al}}(c-1)+ \Om^{\fs{\chg{ {\al}}} {\cfs\tau k}_\Om}+\theta\\
& \leq \chg{\om^{\al}}(c-1)+ \Om^{\fs{\chg{ {\al}}} {\cfs\tau k}_\Om+\theta}\\
&\stackrel{\text{\sc ih}}\leq \Om^{\chg{ { \al}  }}\eta + \Om^{\chg{\fs{ \al} A}} = \chg{\fs{\om^{\al}} A }.
\end{align*}

\item ($\al \in {\rm Succ} $).
Then, $\al = \om\al' + t $ for some finite $t>0$.
The assumption that $\tau(\xi) = \tau$ yields $\chg t=\tau$, and by assumption, $\chg(t-1)\geq \cfs\tau k$.

If $\theta =0$, note that $A\geq 1$, so
\begin{align*}
\chg{\fs{\xi}A} & =\chg{\fs{\om^{\om\al' + t }c} A} = \chg{\big (\om^{\om\al' + t }(c-1) + \om^{\om\al' + t -1 }A \big )}\\
& \geq    \chg{\big ({\om^{\om\al' + t }(c-1) +\om^{\om\al' + t -1 }\big) }}
 = \Om^{\chg {\al}}\eta +   {\Om^{\chg\om\al' + \chg{(t -1)} }  }\\
 &  \stackrel{\text{\sc ih}}\geq \Om^{\chg {\al}}\eta + {\Om^{\chg\om\al' + \cfs\tau k } } = \fs{\chg{\om^{\al}c}}{\cfs\tau k}_\Om +\theta .
\end{align*}

Otherwise, $\theta $ is a limit ordinal, so that $A\geq\theta>1$.
Since also $\om^{\om\al' + t -1}  >1$, using Lemma \ref{lemOrdIneq} we see that 
\begin{align*}
\chg{\fs{\xi}A} & =\chg{\fs{\om^{\om\al' + t }c} A} = \chg{ \big ( \om^{\om\al' + t }(c-1) + \om^{\om\al' + t -1 }A \big ) }\\
& \leq \chg{( \om^{\om\al' + t }(c-1) + \om^{\om\al' + t -1 }+ A)}\\
& =    {\Om^{\chg{\al}}\eta + \Om^{\chg\om\al' + \chg{(t -1)} } + \chg A} \stackrel{\text{\sc ih}}\geq {\Om^{\chg{\al}}\eta + \Om^{\chg\om\al' + \cfs{\tau}k } + \chg A}\\
& = \fs{\chg{\om^{\al}c}}{\cfs\tau k}_\Om+\chg A \geq \fs{\chg{\om^{\al}c}}{\cfs\tau k}_\Om+\theta.&\qedhere
\end{align*}
\end{enumerate}
\end{enumerate}
\end{enumerate}
\end{proof}

With this, we are ready to show that the Goodstein process decreases more slowly in each step than the fundamental sequences do.

\begin{prop}\label{propMinvsFS}
If $m\in \mathbb N$ and $k<\om$, then
\[\cfs{\bch{m}{\bases {k+2}\om}}k \leq \bch{(m-1)}{\bases {k+2}\om}.\]
\end{prop}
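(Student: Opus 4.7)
I would proceed by induction on $m$. The base cases are straightforward: when $m = 0$ both sides equal zero, and when $0 < m < \om$ we have $\chg m = m$ (since natural numbers are fixed by the base change, as verified by a simple induction using $\A_\om(n) = n + 1$ for finite $n$), whereupon the clause $\cfs{\B(\be)}n = \be$ for $\be < \Om$ gives $\cfs{\chg m}k = m - 1 = \chg{(m - 1)}$.

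For the inductive step, write $m \gknf \A(\xi)$ with $\xi \geq \om$, so that $\chg m = \B(\chg\xi)$. Decompose $\xi = \al + b$ with $\al$ a limit. Since every finite ordinal is fixed by the base change, $\chg\xi = \chg\al + b$; in particular, $\chg\xi$ is a successor iff $b > 0$ and a limit iff $b = 0$. The induction hypothesis supplies, for $m' \leq \mc\xi < m$, the precondition ``$\cfs{\chg{m'}}k \leq \chg{(m' - 1)}$'' needed to invoke Lemma \ref{lemmClaim}. The proof then splits on this dichotomy and, within each branch, on whether $\tau(\check{\chg\xi}) = \Om$ or $\tau(\check{\chg\xi}) < \Om$.

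In the ``generic'' sub-case $0 < \tau(\check{\chg\xi}) < \Om$, one has $\cfs{\B(\chg\xi)}k = \B(\cfs{\check{\chg\xi}}k + \B^{\{0\}}(\chg\xi))$. Unfolding $\cfs{\check{\chg\xi}}k$ via the fundamental sequence clause for uncountable arguments and applying Lemma \ref{lemmClaim} bounds the sum inside $\B$ by $\chg{\fs{\xi}{\A(\xi - 1)}}$, while Lemma \ref{lemmThetaStar} controls $\B^{\{0\}}(\chg\xi) \leq \chg{\A(\xi - 1)}$. Combined with equation \eqref{eqRecurs} (namely $\A(\xi) = \A(\predek{\xi}{k+2})$), Corollary \ref{corMon} (monotonicity of base change), and Proposition \ref{propAOrder} (to pass from the ordinal bound through $\B$), this yields $\cfs{\B(\chg\xi)}k \leq \chg{(m - 1)}$. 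In the sub-case $\tau(\check{\chg\xi}) = \Om$, the relevant clause instead gives $\cfs{\B(\chg\xi)}k = \B^{\{k\}}(\chg\xi)$, which I would handle by a secondary induction on $i \leq k$ establishing $\B^{\{i\}}(\chg\xi) \leq \chg{\A^{(i)}(\xi)}$, iterating Lemma \ref{lemmClaim} once at each step and concluding from the case $i = k$.

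\textbf{Main obstacle.} The principal difficulty lies in the intricate case analysis forced by the many clauses in the definition of $\cfs{\cdot}{\cdot}$, exacerbated by the auxiliary quantity $\B^{\{0\}}(\chg\xi)$ taking non-trivial values when $\chg\xi \in {\rm FIX}$ (so $\B^{\{0\}}(\chg\xi) = \tau(\chg\xi)$ is a limit contributing to the argument of $\B$) and by the iterated definition of $\B^{\{n\}}$ when $\tau(\check{\chg\xi}) = \Om$. Keeping track of the $-1$ on the right-hand side of the goal through these iterations, so that the bound from Lemma \ref{lemmClaim} is strict in the right place, is the most delicate piece of bookkeeping.
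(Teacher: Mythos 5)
Your high-level skeleton (induction on $m$, the clauses of $\cfs\cdot\cdot$, Lemma \ref{lemmThetaStar}, Lemma \ref{lemmClaim}, and a secondary induction giving $\B^{\{i\}}\leq\chg{\A^{(i)}(\xi)}$ when $\tau=\Om$) does track the paper's argument, but it rests on a false structural claim: that natural numbers, and hence the finite part $b$ of $\xi=\al+b$, are fixed by the base change $\uparrow^\om_{k+2}$. Base change is applied to the $(k+2)$-normal form, and normal forms of finite numbers can have infinite exponents: by Lemma \ref{lemmMajorize}, every $\zeta>\om$ satisfies $\A_{k+2}(\zeta)>\A_{k+2}(\om)=k+3$, so $k+3\nfp{k+2}\A_{k+2}(\om)$ and $\chg{(k+3)}=\A_\om(\Om)\geq\om$. (If your claim ``$\chg m=m$ for $0<m<\om$'' were true, the whole proposition would be trivial, since every $m\in\N$ is finite; the correct computation in the case $\xi<\om$ is simply $\cfs{\B(\chg\xi)}k=\chg\xi=\chg{(m-1)}$ by the clause for arguments below $\Om$, with no claim that $\chg{(m-1)}=m-1$.) In particular $\chg\xi=\chg\al+\chg b$ with $\chg b$ possibly an infinite limit ordinal, so your dichotomy ``$\chg\xi$ is a successor iff $b>0$, a limit iff $b=0$'' is wrong, and it mis-structures the whole case analysis.

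The concrete gap this creates is a missing case in the inductive step: $b>0$ with $\chg b\in{\rm Lim}$, i.e.\ $\B^{\{0\}}(\chg\xi)=0$ and $\widecheck{\chg\xi}=\chg\xi>\chg\al$, where the fundamental sequence recurses into the coefficient, $\cfs{\B(\chg\xi)}k=\B\big(\chg\al+\cfs{\chg b}k\big)$. Neither of your two branches covers this (under your false premise, $b>0$ always lands in the ``successor'' clause), and Lemma \ref{lemmClaim} is not the tool for it: the paper handles it by applying the main induction hypothesis directly to the number $b\leq\mc\xi<m$, together with Corollary \ref{corMon} and Theorem \ref{theoMax}, via
\[
\chg{(m-1)}\;\geq\;\chg{\A(\xi-1)}\;\geq\;\B\big(\chg\al+\chg{(b-1)}\big)\;\geq\;\B\big(\chg\al+\cfs{\chg b}k\big)\;=\;\cfs{\chg m}k .
\]
Your other two branches essentially coincide with the paper's treatment of the case $\widecheck{\chg\xi}=\chg\al$, with one further correction: in the branch $\tau(\widecheck{\chg\xi})=\Om$, ``iterating Lemma \ref{lemmClaim}'' does not work, since its conclusion involves $\cfs\tau k$, which is only meaningful for $\tau<\Om$; the inductive step $\B^{\{i+1\}}\leq\chg{\A^{(i+1)}(\xi)}$ instead uses the commutation $\chg{\fs\al n}=\fs{\chg\al}{\chg n}_\Om$ of Lemma \ref{lemmUpSame} (valid because $\tau(\chg\al)=\Om$), with Lemma \ref{lemmThetaStar} supplying the case $i=0$.
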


\begin{proof}
Proceed by induction on $m$.
Write $\uparrow$ for $\uparrow^\om_{k+2}$, $\A$ for $\A_{k+2}$, $\B$ for $\A_\om$, and let $m\nfp{k+2}\A(\xi)$.
Consider the following cases.
\begincases

\item ($\xi<\om$).
Then $m-1=\xi$ and $\chg m = \B(\chg \xi) = \chg\xi+1$, so
$\chg {(m-1)} = \chg \xi  = \cfs { \chg { m } }k$.

\item ($\om\leq \xi$).
Write $\xi=\al+b$ with $\al$ a limit and consider two sub-cases, according to $\widecheck{\chg\xi}$.
\beginsubcases

\item ($\widecheck{\chg\xi} > \chg\al $).
By the definition of $\check\cdot$, this is only possible if $\B^{\{0\}}(\chg\xi)=0$ and $\chg b>0 $; by the definition of  $\B^{\{0\}}(\chg\xi)$, we must also have $\chg b\in \rm Lim$.
Checking the definitions of the fundamental sequences, we have that $\cfs{\B(\chg\xi)}k  = \B( \chg \al+ \cfs{\chg b}k  ) $, hence
\begin{align*}
\chg {(m-1)} &=\chg { \big (\A(\xi)-1\big ) } \geq  \chg \A(  \xi-1  ) \\
& \geq \B \big ( \chg \al + \chg (b-1)\big )
\stackrel{\text{\sc ih}}\geq  \B( \chg \al+ \cfs{\chg b}k  ) = \cfs{\B(\chg\xi)}k .
\end{align*}

\item ($\widecheck{\chg\xi} = \chg\al $).
Let $\theta = \B^{\{0\}} (\chg \xi)$ and $A =   \A(\xi-1)$, so that by
Lemma \ref{lemmThetaStar}, $\theta \leq \chg A$.
Let $\tau= \tau( \chg\xi  )$ and consider two sub-cases.
\beginsubcases

\item ($0<\tau<\Om$).
By Lemma \ref{lemmClaim}, $\fs{\chg\al} {\cfs \tau k}_\Om +\theta \leq \chg {\fs{\al}A  }$. 
Moreover,
\[\mck\Om{  \fs{\chg\al} {\cfs \tau k}_\Om +\theta} \leq \max\{ \mck\Om{\chg\al}, \theta\} < \chg A \leq \mck \Om{  \chg { \fs{\al}A  }},\]
so that the induction hypothesis and Lemma \ref{lemmClaim} yield
\begin{align*}
\cfs{ \B( \chg\xi) }{k}_\Om & = \B( \fs{\chg\al}{\cfs\tau k}_\Om+\theta)
\leq \B( \chg {\fs{\al }A  } )\\
&\stackrel{\text{\sc ih}}= \B(\chg {\fs{ \al }A  }  )  \leq  \chg{\A (  {\fs{ \al }A  })}  = \chg \A^{(1)} (\xi)   < \chg \A(\xi).
\end{align*}

\item ($\tau  = \Om$).
Define $\theta_i = \B^{\{i\}}( \chg \al +\theta)$.
We claim that $\theta_i\leq \chg{\A^{(i)}(\xi)} $ for all $i$.
Since $\chg{(m-1)}\leq \chg{\A^{(k+1)}(\xi)}$ and $\cfs{\B(  { \chg \al} ) }k =\theta_k$, this yields the desired result.

For $i=0$, we already have that $\theta \leq \chg\A(\xi-1) = \chg{\A^{(0)}(\xi)}$.
For $i+1$, we see that
\begin{align*}
\theta_{i+1} & = \B( \fs{\chg \al }{\theta_i}_\Om)\\
& \leq \B(  \fs{\chg \al }{ \chg{\A^{(i)}(\xi)} }_\Om ) &\text{by induction on $i$}\\
& = \B(  \chg{\fs{  \al }{  {\A^{(i)}(\xi)} }}  ) &\text{by Lemma \ref{lemmUpSame} and $\tau(\chg\al)=\Om$}\\
& = \chg \A(   \fs{  \al }{ {\A^{(i)}(\xi)} }  ) \\
& = \chg \A^{(i+1)}(\xi). & \qedhere
\end{align*}

\end{enumerate}
\end{enumerate}

\end{enumerate}
\end{proof}

The following corollary, while not used explicitly for our main results, shows that our ordinal assignment is surjective.

\begin{corollary}
For all $\xi < \B (\ve_{\Om+1})$, there exist $k\geq 2$ and $\xi<\ve_0$ such that $\zeta= \bch \xi{\bases k\om}$.
\end{corollary}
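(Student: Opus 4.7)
The plan is to prove this by induction on the structure of $\zeta$ in the standard Buchholz-style notation system, which represents every ordinal $\zeta<\B(\ve_{\Om+1})$ uniquely as a term built from $0$, $+$, $\Om^{\cdot}$, and $\A_\om$ (cf.~\cite{BuchholzOrd,FWTheta}). The key observation is that the base-change operator $\bch{\cdot}{\bases k\om}$ is defined by exactly the same clauses, with $\om$ in place of $\Om$ and $\A_k$ in place of $\A_\om$. Thus, given the Buchholz term for $\zeta$, the idea is to syntactically ``reverse'' the base change: replace each $\Om$ by $\om$ and each $\A_\om$ by $\A_k$, obtaining an $\om$-expression $\xi<\ve_0$ with $\bch\xi{\bases k\om}=\zeta$.

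More concretely, I would proceed by induction on the Buchholz-term rank of $\zeta$. The case $\zeta=0$ is trivial. If $\zeta=\Om^\al n+\ga$ in $\Om$-normal form with $n<\Om$ and $\ga<\Om^\al$, apply the induction hypothesis separately to $\al$, $n$, and $\ga$ to obtain preimages $\xi_\al$, $\xi_n$, $\xi_\ga$ at some common base $k$ (the maximum of the three bases returned), and set $\xi=\om^{\xi_\al}\xi_n+\xi_\ga$; the recursive clauses of the base-change operation on $\om$-normal form ordinals then give $\bch\xi{\bases k\om}=\zeta$. If $\zeta=\A_\om(\eta)$ is principal, apply induction to $\eta$ to obtain $\xi'$ with $\bch{\xi'}{\bases k\om}=\eta$, and set $\xi=\A_k(\xi')$, so that $\bch\xi{\bases k\om}=\A_\om(\bch{\xi'}{\bases k\om})=\A_\om(\eta)=\zeta$.

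The main obstacle is the case $\zeta=\A_\om(\eta)$ when $\eta\geq\Om$: then $\eta$ is uncountable and the induction hypothesis as literally stated does not apply. To overcome this, I would strengthen the statement to assert that every $\eta<\ve_{\Om+1}$ appearing as a subterm of some Buchholz-style normal form for an ordinal below $\B(\ve_{\Om+1})$ lies in the image of $\bch{\cdot}{\bases k\om}$ for some $k\geq 2$; the strengthened statement is then proven by well-founded induction on the subterm rank in the Buchholz notation, and the corollary is recovered as its specialization to $\zeta<\B(\ve_{\Om+1})$.

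A small additional point is ensuring that the natural-number coefficients appearing in $\zeta$'s $\Om$-normal form lie in the image of $\bch{\cdot}{\bases k\om}$ restricted to $\mathbb N$. This is immediate once $k$ is chosen sufficiently large, since an easy induction on $n$ yields $\bch n{\bases k\om}=n$ whenever $k\geq n$, using that $n\nfp k\A_k(n-1)$ for such $n$ and that $\A_\om$ acts as the successor on sufficiently small ordinals.
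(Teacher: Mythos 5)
Your plan founders on a point that is easy to miss but is in fact the crux: $\bch{\cdot}{\bases k\om}$ is not a substitution operator on arbitrary representations. For a natural number $m$ it is defined through the canonical $k$-normal form of Definition~\ref{defNF}, namely $\bch m{\bases k\om}=\A_\om(\bch{\xi^*}{\bases k\om})$ where $\xi^*$ is produced by the maximization procedure, and the coefficients inside an ordinal are likewise base-changed via their normal forms. So in your key case $\zeta=\A_\om(\eta)$, finding $\xi'$ with $\bch{\xi'}{\bases k\om}=\eta$ and putting $m=\A_k(\xi')$ does not yield $\bch m{\bases k\om}=\zeta$: unless $\A_k(\xi')$ happens to be the normal form of $m$, the definition computes the base change from the canonical $\xi^*\neq\xi'$, and by Theorem~\ref{theoMax} the resulting value is $\geq\A_\om(\eta)$, in general strictly larger. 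Your induction gives no control over normal-formness of the constructed preimages, and supplying it would require the maximality and preservation machinery of Sections~\ref{secNF}--\ref{secNorm} all over again, not a rank induction on Buchholz-style terms. There are also structural mismatches: a countable target $\zeta<\B(\ve_{\Om+1})$ has the degenerate $\Om$-normal form $\Om^0\cdot\zeta$, so your additive/$\Om$-power case never applies to $\zeta$ itself (and for $\xi\geq\om$ one has $\bch\xi{\bases k\om}\geq\Om$, so infinite ordinal preimages are useless; the statement is really about preimages $m\in\N$, as the quantifier over $\ve_0$ in the printed statement is a slip); moreover the coefficients $\be<\Om$ occurring inside the uncountable subterms are arbitrary countable ordinals, each of which must be realized as $\bch b{\bases k\om}$ for some $b\in\N$ --- a fresh instance of the whole problem, not the easy observation that $\bch n{\bases k\om}=n$ for small $n$.

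The paper proves the corollary by a descent argument that avoids constructing preimages altogether: since $\bch{\A_2(\om_{i})}{\bases 2\om}=\B(\Om_{i})>\zeta$ for suitable $i$, the image of the maps $m\mapsto\bch m{\bases k\om}$ contains ordinals $\geq\zeta$; take the least such ordinal $\xi'$, and if $\xi'>\zeta$ choose $k$ with $\cfs{\xi'}k\geq\zeta$, move the witness to base $k+2$ via Corollary~\ref{corPreserve}, and apply Proposition~\ref{propMinvsFS} to obtain an image point in $[\zeta,\xi')$, contradicting minimality. This never has to check that any constructed term is a canonical normal form. To rescue your syntactic approach you would have to prove, simultaneously with surjectivity, that the representations you build are the canonical normal forms of their values; as it stands, that is the missing idea, and it is the hard part.
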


\begin{proof}
If $\xi < \B (\ve_{\Om+1})$ then $\xi < \B (\Om_i) $ for some $i$, and
\[\bch {\A(\om_i)} {\bases 2\om} \geq \B(\bch{\om_i}{\bases 2\om} ) = \B(\Om_i) >\xi.\]
Thus there exist $m' $ and $k'\geq 2$ such that $\bch {m'}{\bases {k'}\om}\geq \xi$.
Let $\xi'\geq \xi$ be least with the property that $\bch {m'}{\bases {k'}\om}\geq \xi'$ for some $m'$ and $k'\geq 2$.

We claim that $\xi'=\xi$.
If not, let $k\geq k'$ be large enough so that $\cfs{\xi'}k \geq \xi $, and let $m = \bch {m'}{\bases{k'}{k+2}}$.
By Proposition \ref{propMinvsFS}, we have that
\[\xi\leq\cfs{\xi'}k \leq \bch{(m-1)}{\bases {k+2}\om} < \bch{ m }{\bases {k+2}\om} = \xi'. \]
But $\zeta:=\bch{(m-1)}{\bases {k+2}\om} $ contradicts the minimality of $\xi'$.
Thus we conclude that $\xi'=\xi$, as desired.
\end{proof}

\begin{corollary}\label{corPsivsFS}
Given $m,k\in\mathbb N$, $\bch {\good mk}{\bases {k+2}\om} \geq \fsi{\bch m{\bases 2\om}} k$.
\end{corollary}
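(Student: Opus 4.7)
The plan is to set $\xi_i := \bch{\good m i}{\bases{i+2}\om}$ for $i = 0, 1, 2, \ldots$ and verify that this sequence satisfies the hypotheses of Proposition \ref{propMajorize}, namely $\cfs{\xi_i}{i+1} \leq \xi_{i+1} \leq \xi_i$. Once this is done, Proposition \ref{propMajorize} directly yields $\xi_k \geq \fsi{\xi_0}{k} = \fsi{\bch m{\bases 2\om}}{k}$, which is exactly the inequality we want since $\xi_k = \bch{\good mk}{\bases{k+2}\om}$.

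To verify the hypothesis, I would fix $i$ and write $n = \good m i$ and $n' = \bch{n}{\bases{i+2}{i+3}}$, so that $\good m{i+1} = n' - 1$ by the definition of the Goodstein process. First, by Corollary \ref{corPreserve},
\[
\bch{n'}{\bases{i+3}\om} \;=\; \bch{\bch{n}{\bases{i+2}{i+3}}}{\bases{i+3}\om} \;=\; \bch{n}{\bases{i+2}\om} \;=\; \xi_i.
\]
Then I would apply Proposition \ref{propMinvsFS} (with $n'$ in place of $m$ and $i+1$ in place of $k$) to obtain
\[
\cfs{\xi_i}{i+1} \;=\; \cfs{\bch{n'}{\bases{i+3}\om}}{i+1} \;\leq\; \bch{(n'-1)}{\bases{i+3}\om} \;=\; \xi_{i+1}.
\]
The upper bound $\xi_{i+1} \leq \xi_i$ is immediate from Corollary \ref{corMon}, since $n'-1 < n'$ implies $\bch{(n'-1)}{\bases{i+3}\om} < \bch{n'}{\bases{i+3}\om} = \xi_i$.

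With both inequalities established, Proposition \ref{propMajorize} applied to the sequence $(\xi_i)_{i \in \mathbb{N}}$ gives $\xi_k \geq \fsi{\xi_0}{k}$, completing the proof. There is no real obstacle here: all the hard work was done in Proposition \ref{propMinvsFS} (which compares a single Goodstein step to a single step along the fundamental sequence) and in the majorization principle of Proposition \ref{propMajorize}. The only thing to be careful about is matching indices correctly — specifically, noticing that Proposition \ref{propMinvsFS} applied at stage $i+1$ with base $i+3$ lines up with the index $i+1$ required by Proposition \ref{propMajorize}, thanks to the base-change identity provided by Corollary \ref{corPreserve}.
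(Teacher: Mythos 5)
Your proposal is correct and follows essentially the same route as the paper: apply Proposition \ref{propMinvsFS} at each step (via the base-change identity of Corollary \ref{corPreserve}) to show the sequence $\xi_i = \bch{\good mi}{\bases{i+2}\om}$ satisfies the hypotheses of Proposition \ref{propMajorize}, which then gives the global bound. Your version is in fact slightly more careful than the paper's terse proof, since you make the use of Corollary \ref{corPreserve} explicit and verify the monotonicity hypothesis $\xi_{i+1}\leq\xi_i$ via Corollary \ref{corMon}.
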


\begin{proof}
We have that
\begin{align*}
\bch{\good m{k+1}}{\bases {k+3}\om} & =  \bch{\big ( \good mk-1 \big )}{\bases {k+3}\om} \geq \cfs{\good mk}k,
\end{align*}
where the inequality is an instance of Proposition \ref{propMinvsFS}.
Proposition \ref{propMajorize} then yields $\bch {\good mk}{\bases {k+2}\om} \geq \fsi{\bch m{\bases 2\om}} k$ for all $k$.
\end{proof}

\begin{theorem}\label{theoGoodInc}
Theorem \ref{theoGood} (and hence Theorem \ref{theoWalk}) is not provable in $\sf KP$.
\end{theorem}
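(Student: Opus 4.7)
My plan is to apply Theorem~\ref{theoKPInc} to a $\sf KP$-provably total sequence of natural numbers $(m_n)_{n\in\mathbb N}$ chosen so that the termination time $L(m) := \min\{\ell : \good m\ell = 0\}$ of the fast Goodstein process starting at $m_n$ satisfies $L(m_n)\ge F(n)$ for every $n$. Should $\sf KP$ prove Theorem~\ref{theoGood}, the function $L$ would be $\sf KP$-provably total, hence so would $n\mapsto L(m_n)$; but then Theorem~\ref{theoKPInc}, applied to $\varphi(n,\ell) := (\good{m_n}\ell = 0)$, would give $L(m_n) < F(n)$ eventually, contradicting the lower bound.

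Concretely, set $\om_0 := 1$, $\om_{k+1} := \om^{\om_k}$, and $m_n := \A_2(\om_n)$; the function $n\mapsto m_n$ is primitive recursive, so certainly provably total in $\sf KP$. The first step of the argument is to show that $m_n \gknf \A_2(\om_n)$, i.e.\ that $\A_2(\om_n)$ is already the normal form of $m_n$. Granting this, the definition of base change on numbers gives $\bch{m_n}{\bases 2\om} = \A_\om(\bch{\om_n}{\bases 2\om}) = \B(\Om_n)$. Corollary~\ref{corPsivsFS} then yields
\[\bch{\good{m_n}k}{\bases{k+2}\om} \ge \fsi{\B(\Om_n)}k \qquad \text{for every } k.\]
Taking $k := L(m_n)$ makes the left-hand side zero, so $\fsi{\B(\Om_n)}{L(m_n)} = 0$ and $L(m_n) \ge F(n)$ by the definition of $F$ in Definition~\ref{defFFun}. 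Since Theorem~\ref{theoWalk} implies Theorem~\ref{theoGood} (the canonical Goodstein process is a particular Goodstein walk), the unprovability of Theorem~\ref{theoWalk} in $\sf KP$ follows immediately from that of Theorem~\ref{theoGood}.

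I expect the main obstacle to be the normal-form claim $m_n \gknf \A_2(\om_n)$, which amounts to showing that $\om_n$ is the maximum $\zeta$ with $\A_2(\zeta) \le m_n$ and $\mc{\zeta} \ge 1$. For $\zeta = \om_n + b$ with $b\ge 1$ finite, Lemma~\ref{lemmMajorize} together with Lemma~\ref{lemmBoundTwo} yields $\A_2(\zeta) > m_n$; Lemma~\ref{lemmOneMore} then forces $\A_2(\zeta) \ge \A_2(\om_n + 1) > m_n$ for all larger $\zeta$ not already ruled out; and Lemma~\ref{lemmBetweenSlow} together with Corollary~\ref{corBetweenSlow} handles the $\zeta$ sitting in a fundamental-sequence gap above $\om_n$. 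If the direct verification turns out to be unexpectedly delicate, a fall-back route is to use Theorem~\ref{theoMax} with $\la=\om$ to obtain only the weaker inequality $\bch{m_n}{\bases 2\om} \ge \B(\Om_n)$ and then supplement it with a Bachmann-style monotonicity lemma for the iterated fundamental-sequence process, namely $\alpha \ge \beta \Rightarrow \min\{\ell : \fsi\alpha\ell = 0\} \ge \min\{\ell : \fsi\beta\ell = 0\}$, which would still deliver $L(m_n) \ge F(n)$ and complete the argument.
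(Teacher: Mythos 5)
Your proposal is correct and follows essentially the same route as the paper: the paper likewise takes $m(n)=\A_2(\om_n)$, verifies it is in $2$-normal form, computes $\bch{m(n)}{\bases 2\om}=\A_\om(\Om_n)$, applies Corollary \ref{corPsivsFS} to bound the termination time below by $F(n)$, and concludes via Theorem \ref{theoKPInc} (with the same remark that unprovability of Theorem \ref{theoGood} transfers to Theorem \ref{theoWalk}). One small slip: $n\mapsto\A_2(\om_n)$ is far from primitive recursive (it dominates every primitive recursive function), but it is provably total in $\sf KP$, as the paper notes, which is all your argument actually needs.
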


\proof
Let $m(n):=\A_2 (\om_{n})$.
It is not hard to check that $m(n)$ is in normal form, as $\xi>\om_n$ implies also that $\mc\xi\geq \mc{\om_n}$ and hence $\A(\xi)>\A(\om_n)$ by Lemma \ref{lemmMajorize}. 
Then, $\bch{m(n)}{\bases 2\om } = \bch{\A(\om_{n})}{\bases 2\om } =  \B (\Om_{n})$.
Let $G(n)$ be the least value of $k$ such that $\good{m(n)}k = 0$.
By Corollary \ref{corPsivsFS},
\[ \bch{\good {m(n)}k}{\bases {k+2}\om }  \geq \fsi{\bch{m(n)}{\bases 2\om }}k = \fsi{\B(\Om_{n}) }k .\]
Hence, $G(n) \geq F(n)$, as any value of $k$ with $ \good {m(n)}k =0$ also has $\fsi{\B(\Om_{n}) }k = 0$.
The function $G(n)$ is clearly computable, hence definable by a $\Sigma^0_1$ formula.
It follows that $\forall x \exists y \ \big ( G(n) =y \big )$ is not provable in $\sf KP$, hence neither is Theorem \ref{theoMax}.
\endproof

\section{Concluding remarks}\label{secConc}

We have proven that Goodstein processes based on the $\A$ function always terminate, leading to independence results of strength the Bach\-mann-Howard ordinal.
This opens various lines of research.
The $\A$ function is not the only fast-grow\-ing function based on transfinite recursion below $\ve_0$.
A natural question is how sensitive the termination and independence results presented here are to the precise choice of fast-grow\-ing functions used (e.g.~the aforementioned Hardy function~\cite{Hardy}), and in particular if the normal forms based on successive maximization we used yield maximality of base change in a more general context.

Variation of normal forms also leads to questions regarding independence.
Note that the alternative normal forms of Example \ref{exAlt} give rise to a terminating Goodstein process, but our methods do not establish any lower bounds on such processes, so it is not immediately obvious whether termination is provable in $\sf KP$.
We conjecture that it is not, and remark that independence for this alternative Goodstein principle would also imply Theorem \ref{theoGoodInc}.
Thus a different strategy for the current work would have been to prove that the maximal Goodstein principle terminates, and that the alternative one leads to independence, thus obtaining two independence results at once. 
The drawback of such a `dual' approach is that, at least in this case, the two Goodstein processes would have to be studied separately, for example providing different ordinal assignments for each.
We thus leave the analysis of this alternative Goodstein process for future work, but such a `dual' approach may be interesting when studying less involved Goodstein processes.

Another direction involves Goodstein processes based on fast-grow\-ing hierarchies up to ordinals $\Lambda>\ve_0$, perhaps based on the Bach\-mann-Howard ordinal, or even lager ordinals where suitable systems of fundamental sequences are known, e.g.~the ordinal of $\Pi^1_1$-${\sf CA}_0$.
The challenge is in identifying the base-change maximal normal forms in these contexts, and the proof-theoretic strength of termination.

\bibliographystyle{plain}
\bibliography{biblio}

\end{document}